\documentclass{article}
\usepackage{amsfonts}
\usepackage{amssymb}
\usepackage{amsmath}
\usepackage{hyperref}

\setcounter{MaxMatrixCols}{10}

\newtheorem{theorem}{Theorem}

\newtheorem{corollary}[theorem]{Corollary}

\newtheorem{lemma}[theorem]{Lemma}

\newtheorem{proposition}[theorem]{Proposition}
\newtheorem{remark}[theorem]{Remark}

\newenvironment{proof}[1][Proof]{\noindent\textbf{#1.} }{\ \rule{0.5em}{0.5em}}
\input{tcilatex}
\begin{document}

\title{Rates of convergence to equilibrium for collisionless kinetic
equations in slab geometry}
\author{Mustapha Mokhtar-Kharroubi \\
Laboratoire de Math\'{e}matiques, UMR 6623\\
Universit\'{e} de Bourgogne Franche-Comt\'{e} \\
16 Route de Gray, 25030 Besan\c{c}on, France. \\
E-mail: mmokhtar@univ-fcomte.fr \and David Seifert \\
St John's College, St Giles, Oxford OX1 3JP, United Kingdom\\
E-mail: david.seifert@sjc.ox.ac.uk }
\date{}
\maketitle

\begin{abstract}
This work deals with free transport equations with partly diffuse stochastic
boundary operators in slab geometry. Such equations are governed by
stochastic semigroups in $L^{1}$ spaces$.\ $We prove convergence to
equilibrium at the rate $O\left( t^{-\frac{k}{2(k+1)+1}}\right) \
(t\rightarrow +\infty )$ for $L^{1}$ initial data $g$ in a suitable subspace
of the domain of the generator $T$ where $k\in 
\mathbb{N}
$ depends on the properties of the boundary operators near the tangential
velocities to the slab. This result is derived from a quantified version of
Ingham's tauberian theorem by showing that $F_{g}(s):=\lim_{\varepsilon
\rightarrow 0_{+}}\left( is+\varepsilon -T\right) ^{-1}g$ exists as a $C^{k}$
function on $%
\mathbb{R}
\backslash \left\{ 0\right\} $ such that$\ \left\Vert \frac{d^{j}}{ds^{j}}%
F_{g}(s)\right\Vert \leq \frac{C}{\left\vert s\right\vert ^{2(j+1)}}\ $near $%
s=0$ and bounded as $\left\vert s\right\vert \rightarrow \infty \ \ \left(
0\leq j\leq k\right) .$ Various preliminary results of independent interest
are given and some related open problems are pointed out.$\ $
\end{abstract}

\section{\label{Section Introduction}Introduction}

This paper is devoted to rates of convergence\textit{\ }to equilibrium%
\textit{\ }for one-dimensional free (i.e. collisionless) transport equations
with mass-preserving partly diffuse boundary operators. We provide a general 
$L^{1}$\textit{\ }theory relying on a \textit{quantified tauberian theorem} 
\cite{CS}. In linear or non-linear kinetic theory, various non-local
(combinations of specular and diffuse) boundary conditions are physically
relevant, see e.g. \cite{Guo}\cite{Petterson} and the references therein.
Furthermore, general free transport\textit{\ }equations with smooth vector
fields and positive contractive boundary operators are well posed, see e.g. 
\cite{ABL1}\cite{ABL2}. On the other hand, the existence of an invariant
density and the return to this equilibrium state for solutions to\ free
transport equations has not\ received much attention; see however \cite{AG}%
\cite{AN}\cite{Kuo-Liu-Tsai}\cite{TAG} for the vector field $v.\nabla _{x}\ $%
with a Maxwell diffuse boundary operator with constant temperature; in this
case, the invariant density is given by a maxwellian function. The $L^{1}$
convergence to this maxwellian equilibrium goes back to \cite{AN} while the
analysis of rates of convergence was considered more recently in \cite{AG}%
\cite{Kuo-Liu-Tsai} after some numerical investigations in \cite{TAG}; we
will comment below on some results in \cite{AG}\cite{Kuo-Liu-Tsai}.\ \ We
note that collisionless transport semigroups present a lack of spectral gap
which make them akin to\textit{\ }collisional linear kinetic equations with 
\textit{soft} potentials. More recently, the authors of \cite{MK-Rudnicki}
provided a convergence theory to equilibrium for a general class of
monoenergetic\ free transport equations in slab geometry with azimuthal
symmetry and abstract boundary operators. In this abstract model, the
existence of invariant density is characterized and shown for a general
class of partly diffuse boundary operators. Our aim here is to derive a
quantified version (with algebraic\textit{\ }rates) of this convergence
theory from a quantified version of Ingham's tauberian theorem \cite{CS}. We
provide a general theory based on some natural structural conditions on the
boundary operators in the vicinity of the tangential velocities to the slab.
To keep the ideas of this work more transparent, we restrict ourselves to
monoenergetic models; (non-monoenergetic free models in slab geometry could
be treated similarly, see Remark \ref{Remark Generalisation 1}).\ Besides
the main result on the rates of convergence, our construction provides us
with various new mathematical results of independent interest. Several open
problems are also pointed out.$\ $

We note that a special quantified version of Ingham's theorem for
"asymptotically analytic" $C_{0}$-semigroups (see \cite{CS} Corollary 2.12)
was already used for the first time in kinetic theory to deal with spatially
homogeneous linear Boltzmann equations with soft potentials where the
generators are bounded\textit{\ }\cite{LodsMK}. Finally, we point out that
there exists a substantial literature on rates of convergence to equilibrium
for collisional (linear or non-linear) kinetic equations relying mostly on 
\textit{entropy methods}. In particular, collisional kinetic equations with
soft potentials exhibit\textit{\ }algebraic rates of convergence, see e.g. 
\cite{Canizo-Lods}\cite{Carlen}\cite{Desvillettes}\cite{LodsMK}\cite{Toscani}
and references therein.

We consider here the monoenergetic free transport equation in slab geometry
with azimuthal symmetry 
\begin{equation}
\frac{\partial f}{\partial t}(t,x,v)+v\frac{\partial f}{\partial x}%
(t,x,v)=0,\ \ \ (x,v)\in \Omega   \label{Free equation}
\end{equation}%
\begin{equation}
f(0,x,v)=g(x,v)  \label{Initial data}
\end{equation}%
where%
\begin{equation*}
\Omega =\left( -a,a\right) \times \left( -1,1\right) 
\end{equation*}%
(with $a>0$). The boundary conditions are 
\begin{equation}
\left\vert v\right\vert f(t,-a,v)=\alpha _{1}\left\vert v\right\vert
f(t,-a,-v)+\beta _{1}K_{1}(\left\vert \cdot \right\vert f_{-a}^{-}(t))\ \ \
(v>0),  \label{Boundary condition 1}
\end{equation}%
\begin{equation}
\left\vert v\right\vert f(t,a,v)=\alpha _{2}\left\vert v\right\vert
f(t,a,-v)+\beta _{2}K_{2}(\left\vert \cdot \right\vert f_{a}^{+}(t))\ \ \
(v<0)  \label{Boundary condition 2}
\end{equation}%
where%
\begin{equation}
\alpha _{i}\geq 0,\quad \beta _{i}\geq 0,\quad \alpha _{i}+\beta _{i}=1\ \
(i=1,2);  \label{Convex combinations}
\end{equation}%
here $f_{-a}^{-}(t)$ (resp. $f_{a}^{+}(t)$) denotes the restriction of $%
f(t,-a,.)$ (resp. $f(t,a,.)$) to $\left( -1,0\right) $ (resp. to $\left(
0,1\right) $), 
\begin{equation*}
\left\vert \cdot \right\vert f_{-a}^{-}(t):\left( -1,0\right) \ni
v\rightarrow \left\vert v\right\vert f(t,-a,v)
\end{equation*}%
\begin{equation*}
\left\vert \cdot \right\vert f_{a}^{+}(t):\left( 0,1\right) \ni v\rightarrow
f(t,a,v)
\end{equation*}%
and $K_{i}$ ($i=1,2$) are \textit{stochastic} (i.e. positive and norm
preserving on the positive cone) \textit{weakly compact} operators 
\begin{equation*}
K_{1}\colon L^{1}(\left( -1,0\right) ;\ dv)\rightarrow L^{1}(\left(
0,1\right) ;\ dv),
\end{equation*}%
\begin{equation*}
K_{2}\colon L^{1}(\left( 0,1\right) ;\ dv)\rightarrow L^{1}(\left(
-1,0\right) ;\ dv).
\end{equation*}%
The weak compactness assumption implies that $K_{i}$ has a \textit{kernel} $%
k_{i}(.,.),\ (i=1,2)\ $(see remark in \cite{Dunford}, p. 508); it also plays
a key role in several places of this work. Note that the boundary conditions
are convex combinations of \textit{specular} (deterministic) parts and 
\textit{diffuse} (random) ones modelled by $K_{i}$ ($i=1,2$); in particular,
we can write (\ref{Boundary condition 1})(\ref{Boundary condition 2}) as%
\begin{equation*}
\ h_{-a}^{+}=O_{1}h_{-a}^{-}\text{ and }\ h_{a}^{-}=O_{2}h_{a}^{+}
\end{equation*}%
where%
\begin{equation*}
h_{a}^{\pm }(v)=\left\vert v\right\vert f_{a}^{\pm }(v),\ \ \ h_{-a}^{\pm
}(v)=\left\vert v\right\vert f_{-a}^{\pm }(v)
\end{equation*}%
\begin{equation*}
O_{1}\ =\alpha _{1}R_{1}+\beta _{1}K_{1}\colon L^{1}\left( \left(
-1,0\right) ;\ dv\right) \rightarrow L^{1}\left( \left( 0,+1\right) ;\
dv\right) 
\end{equation*}%
\begin{equation*}
O_{2}\ =\alpha _{2}R_{2}+\beta _{2}K_{2}:L^{1}\left( \left( 0,+1\right) ;\
dv\right) \rightarrow L^{1}\left( \left( -1,0\right) ;\ dv\right) 
\end{equation*}%
and 
\begin{equation*}
R_{1}:L^{1}\left( \left( -1,0\right) ;\ dv\right) \rightarrow L^{1}\left(
\left( 0,+1\right) ;\ dv\right) 
\end{equation*}%
\begin{equation*}
R_{2}:L^{1}\left( \left( 0,+1\right) ;\ dv\right) \rightarrow L^{1}\left(
\left( -1,0\right) ;\ dv\right) 
\end{equation*}%
denotes the specular reflection operators defined by%
\begin{equation*}
\left( R_{i}\varphi \right) (v)=\varphi (-v)\ \ (i=1,2).
\end{equation*}%
We point out that for the physical model in slab geometry with azimuthal
symmetry, 
\begin{equation*}
v\in \left( -1,+1\right) 
\end{equation*}%
is not a "velocity" but rather the cosine of the angles of the monoenergetic
velocities (of particles moving in the slab) with an oriented axis
perpendicular to the slab. In particular, the tangential velocities to the
slab correspond to%
\begin{equation*}
v=0
\end{equation*}%
i.e. to the degeneracy of the vector field $v\frac{\partial }{\partial x}.\ $%
These tangential velocities turn out to play a natural and fundamental role
in our construction. Finally, we note that the boundary conditions are local
in space, i.e. we have two separated boundary conditions (one at $x=-a$ and
another one at $x=a$) even if one can imagine much more complex models
including a coupling of the fluxes at $-a$ and at $a$.

It is known that the problem (\ref{Free equation})(\ref{Initial data})(\ref%
{Boundary condition 1})(\ref{Boundary condition 2}) is well-posed in $%
L^{1}\left( \Omega \right) $ in the sense of semigroup theory and the
corresponding $C_{0}$-semigroup $(e^{tT_{O}})_{t\geq 0}$ with generator $%
T_{O}$ (indexed by $O:=(O_{1},O_{2})$) is stochastic, i.e. norm preserving
on the positive cone \cite{MK-Rudnicki}. We deal here with the \textit{%
partly diffuse} model 
\begin{equation}
\beta _{1}+\beta _{2}>0  \label{Beta1 + Beta2   >0}
\end{equation}%
only, i.e. we assume that at least one boundary condition is at\ least\
partly diffuse. It is known that under condition (\ref{Beta1 + Beta2 >0})
the semigroup admits an invariant density, \cite{MK-Rudnicki}; (see below
for the details). Furthermore, the $C_{0}$-semigroup converges strongly to
its ergodic projection as time goes to infinity provided that 
\begin{equation*}
\beta _{1}\beta _{2}>0,
\end{equation*}%
\cite{MK-Rudnicki}. The lack of spectral gaps for such collisionless kinetic
models means there are no obvious rates of convergence to equilibrium. Our
aim here is to give a \textit{quantified} version of\textit{\ }the
convergence theory given in \cite{MK-Rudnicki}. The most important statement
in this paper is:

\textbf{MAIN THEOREM} \textit{Let the kernels }$k_{i}(.,.)\ $\textit{of }$%
K_{i}\ (i=1,2)$\textit{\ be\ continuous and let }$\left( e^{tT_{O}}\right)
_{t\geq 0}$\textit{\ be irreducible. We assume that at least one of the
boundary conditions is completely diffuse, i.e. }%
\begin{equation}
\beta _{1}=1\ \text{or }\beta _{2}=1.  \label{Assumptions on Beta}
\end{equation}%
\textit{Let there exist an integer }$k\geq 1$\textit{\ such that the
following operators}%
\begin{equation*}
O_{1}\frac{1}{\left\vert v\right\vert ^{j}}O_{2}\frac{1}{\left\vert
v\right\vert ^{p-j}}\text{ \ }(0\leq j\leq p\leq k)
\end{equation*}%
\begin{equation*}
\frac{1}{\left\vert v\right\vert ^{k+1}}O_{1}O_{2},\ \frac{1}{\left\vert
v\right\vert ^{k}}O_{1}\frac{1}{\left\vert v\right\vert }O_{2}\text{,\ }%
\frac{1}{\left\vert v\right\vert ^{k}}O_{1}O_{2}\frac{1}{\left\vert
v\right\vert },\ \frac{1}{\left\vert v\right\vert ^{k+1}}O_{1}\left\vert
v\right\vert ^{k+1}
\end{equation*}%
\begin{equation*}
\left\vert v\right\vert ^{-(k+1-p)}O_{2}\left\vert v\right\vert ^{k+1-p}%
\text{\ }\left( 0\leq p\leq k\right)
\end{equation*}%
\textit{are bounded. Then }$\left( e^{tT_{O}}\right) _{t\geq 0}$\textit{\
has a unique invariant density }$\psi _{0}$\textit{\ and }%
\begin{equation}
\left\Vert e^{tT_{O}}g-\left( \int_{\Omega }g\right) \psi _{0}\right\Vert
=O\left( t^{-\frac{k}{2(k+1)+1}}\right) \ \ \ (t\rightarrow +\infty )
\label{Rate of convergence}
\end{equation}%
\textit{for any initial data }$g\in D(T_{O})$\textit{\ such that}%
\begin{equation*}
\int_{\Omega }\left\vert g(x,v)\right\vert \left\vert v\right\vert
^{-(k+1)}dxdv<+\infty .
\end{equation*}%
Note that the operator $\left\vert v\right\vert ^{j}\ \ (j\in 
\mathbb{Z}
$) refers to the multiplication operator by the function $\left\vert
v\right\vert ^{j}.\ $We will comment below on our assumptions. The rate of
convergence (\ref{Rate of convergence}) is derived from a quantified version
of Ingham's tauberian theorem \cite{CS}; (see Section \ref{Section Ingham}
below). Its proof is quite involved and consists in showing that \textit{the
restriction of the resolvent }$\left( \lambda -T_{O}\right) ^{-1}$\textit{\
to a suitable subspace extends continuously to }$i%
\mathbb{R}
\backslash \left\{ 0\right\} $ \textit{as a }$C^{k}$\textit{\ function} 
\textit{with suitable }$C^{k}\ $\textit{estimates on }$i%
\mathbb{R}
\backslash \left\{ 0\right\} $.\ The main object of this work is therefore
to show how to obtain such estimates provided that one of the boundary
conditions is completely diffuse. (For the obstruction to the treatment of
the general case (\ref{Beta1 + Beta2 >0}), see Remark \ref{Remark 1 Open
problem}.) We note that for the stochastic kinetic semigroups we consider
here, $0$ always belongs to the spectrum of the generator and it may happen
(e.g. if $\beta _{1}=1\ $or $\beta _{2}=1$) that the whole imaginary axis is
included in the spectrum of the generator. Note also that (\ref{Assumptions
on Beta}) need not be the completely diffuse model which corresponds to $%
\beta _{1}=\beta _{2}=1$; for instance the case of a diffuse boundary
condition at $x=-a$ and a specular boundary condition at $x=a$ is covered by
our statement. The proof of the rate of convergence (\ref{Rate of
convergence}) is given at the end of this article (see Theorem \ref{Theorem
rates of convergence}) as a consequence of various preliminary results of
independent interest.

\textit{To our knowledge, the theorem above provides us with the first
systematic quantitative\ result in collisionless kinetic theory for }$L^{1\
} $\textit{initial datum. }Indeed, until now, the sole known quantified $%
L^{1}$ results in collisionless kinetic theory are much better rates
obtained for \textit{bounded initial datum in balls} with Maxwell diffuse
boundary conditions and constant boundary temperature. More precisely, in
dimension $3,$\textit{\ }the rate of convergence in\ $L^{1}$ norm is $%
O(t^{-1})$ if the initial data is radial (in space and in velocity) and is
dominated by a maxwellian function (see \cite{AG} Theorem 4.1); this result
was improved in (\cite{Kuo-Liu-Tsai} Corollary 2) where the rate is shown to
be $O(t^{-d})$ in dimension $d\leq 3$ for bounded initial datum; (see \cite%
{AG}\cite{Kuo-Liu-Tsai} for additional results which we do not comment on
here). We point out that Maxwell diffuse boundary conditions refer to
boundary operators which are (local in space and) \textit{rank-one} in
velocity. Finally, we mention that quantitative time asymptotics have never
been dealt with for\textit{\ partly} diffuse boundary operators.

We give now a more precise view on the mathematical construction behind the
rate of convergence (\ref{Rate of convergence}).\ Let 
\begin{equation*}
W_{1}(\Omega )=\left\{ f\in L^{1}(\Omega );\ v\frac{\partial f}{\partial x}%
\in L^{1}(\Omega )\right\}
\end{equation*}%
$\ $($v\frac{\partial f}{\partial x}$ is understood in the sense of
distributions) be endowed with the norm 
\begin{equation*}
\left\Vert f\right\Vert _{W_{1}}=\left\Vert f\right\Vert +\left\Vert v\frac{%
\partial f}{\partial x}\right\Vert
\end{equation*}%
where%
\begin{equation*}
\left\Vert g\right\Vert =\int_{-a}^{+a}\int_{-1}^{+1}\left\vert
g(x,v)\right\vert dx\,dv,\ \ g\in L^{1}(\Omega ).
\end{equation*}%
According to classical trace theory (see \cite{C1}\cite{C2}), the elements
of $W_{1}(\Omega )$ admit a trace on 
\begin{equation*}
\left\{ -a\right\} \times \left( -1,+1\right) \text{ and }\left\{ a\right\}
\times \left( -1,+1\right)
\end{equation*}%
belonging to the weighted $L^{1}$-space%
\begin{equation*}
L^{1}\left( \left( -1,+1\right) ;\ \left\vert v\right\vert dv\right) .
\end{equation*}%
More precisely, the trace operator is surjective, continuous and admits a
continuous lifting operator. For any $f\in W_{1}(\Omega )$, we denote by $%
f_{-a}^{-}$ (resp. $f_{-a}^{+}$) the restriction of $f(-a,.)$ to $\left(
-1,0\right) $ (resp. to $\left( 0,1\right) $), i.e. 
\begin{equation*}
f_{-a}^{-}\colon \left( -1,0\right) \ni v\rightarrow f(-a,v);\ \
f_{-a}^{+}\colon \left( 0,1\right) \ni v\rightarrow f(-a,v).
\end{equation*}%
Similarly 
\begin{equation*}
f_{a}^{-}\colon \left( -1,0\right) \ni v\rightarrow f(a,v);\ \ \
f_{a}^{+}\colon \left( 0,1\right) \ni v\rightarrow f(a,v).
\end{equation*}%
We keep in mind that 
\begin{equation*}
f_{-a}^{-},\ f_{a}^{-}\in L^{1}\left( \left( -1,0\right) ;\ \left\vert
v\right\vert dv\right) \text{ and \ }f_{-a}^{+},\ f_{a}^{+}\in L^{1}\left(
\left( 0,+1\right) ;\ \left\vert v\right\vert dv\right) .
\end{equation*}%
We define also 
\begin{equation*}
h_{a}^{\pm }(v)=\left\vert v\right\vert f_{a}^{\pm }(v),\ \ \ h_{-a}^{\pm
}(v)=\left\vert v\right\vert f_{-a}^{\pm }(v)
\end{equation*}%
and keep in mind that%
\begin{equation*}
h_{-a}^{-},\ h_{a}^{-}\in L^{1}\left( \left( -1,0\right) ;\ dv\right) \text{
and \ }h_{-a}^{+},\ h_{a}^{+}\in L^{1}\left( \left( 0,+1\right) ;\ dv\right)
.
\end{equation*}%
The transport operator 
\begin{equation*}
T_{O}\ \colon D(T_{O})\subset L^{1}(\Omega )\rightarrow L^{1}(\Omega ),
\end{equation*}%
indexed by $O:=(O_{1},O_{2}),$ is defined by%
\begin{equation*}
T_{O}f=-v\frac{\partial f}{\partial x}
\end{equation*}%
on the domain 
\begin{equation*}
D(T_{O})=\left\{ f\in W_{1}(\Omega );\ h_{-a}^{+}=O_{1}h_{-a}^{-},\
h_{a}^{-}=O_{2}h_{a}^{+}\right\} .
\end{equation*}%
It is known (see \cite{MK-Rudnicki}) that $T_{O}$ generates a stochastic
(i.e. mass preserving on the positive cone) $C_{0}$-semigroup $%
(e^{tT_{O}})_{t\geq 0}$ and, for $g\in L^{1}(\Omega ),$ 
\begin{equation*}
f:=(\lambda -T_{O})^{-1}g,\ \ \ (\func{Re}\lambda >0)
\end{equation*}%
is given by 
\begin{equation}
f(x,v)=\frac{1}{v}e^{-\frac{\lambda }{v}(x+a)}h_{-a}^{+}+\int_{-a}^{x}e^{-%
\frac{\lambda }{v}(x-y)}\frac{1}{v}g(y,v)\,dy\ \ (v>0)
\label{Resolvante positive}
\end{equation}%
\begin{equation}
f(x,v)=\frac{1}{\left\vert v\right\vert }e^{-\frac{\lambda }{\left\vert
v\right\vert }(a-x)}h_{a}^{-}+\int_{x}^{a}e^{-\frac{\lambda }{\left\vert
v\right\vert }(y-x)}\frac{1}{\left\vert v\right\vert }g(y,v)\,dy\ \ (v<0)
\label{Resolvante negative}
\end{equation}%
with 
\begin{eqnarray}
h_{-a}^{+} &=&(1-G_{\lambda })^{-1}O_{1}e^{-\frac{2\lambda a}{\left\vert
v\right\vert }}O_{2}\ \left( \int_{-a}^{a}e^{-\frac{\lambda }{v}%
(a-y)}g(y,v)\,dy\right)  \label{Flux positif} \\
&&+(1-G_{\lambda })^{-1}O_{1}\left( \int_{-a}^{a}e^{-\frac{\lambda }{%
\left\vert v\right\vert }(y+a)}g(y,v)\,dy\right)  \notag
\end{eqnarray}%
\begin{equation}
h_{a}^{-}=O_{2}\left( e^{-\frac{2\lambda a}{\left\vert v\right\vert }%
}h_{-a}^{+}+\int_{-a}^{a}e^{-\frac{\lambda }{v}(a-y)}g(y,v)\,dy\right) \ 
\label{Flux negatif}
\end{equation}%
and%
\begin{equation}
G_{\lambda }f=O_{1}\left( e^{-\frac{2\lambda a}{\left\vert v\right\vert }%
}O_{2}\left( e^{-\frac{2\lambda a}{\left\vert v\right\vert }}f\right) \right)
\label{Definition   G lambda}
\end{equation}%
where the operator "$e^{-\frac{2\lambda a}{\left\vert v\right\vert }}$"
refers to \textit{the multiplication }operator by the function $e^{-\frac{%
2\lambda a}{\left\vert v\right\vert }}.\ $For the sake of simplicity, if no
ambiguity may occur, the different (natural) $L^{1}$ norms as well as their
corresponding operator norms are denoted by the symbol $\left\Vert
{}\right\Vert .\ $Note that $\left\Vert G_{\lambda }\right\Vert \leq e^{-4a%
\func{Re}\lambda }\ (\func{Re}\lambda \geq 0)$ and $\left\Vert
G_{0}\right\Vert =1.$ Under the general assumption (\ref{Beta1 + Beta2 >0}),
the \textit{essential} spectral radii of the stochastic operators 
\begin{equation*}
G_{0}=O_{1}O_{2}:L^{1}\left( \left( 0,+1\right) ;\ dv\right) \rightarrow
L^{1}\left( \left( 0,+1\right) ;\ dv\right)
\end{equation*}%
\begin{equation*}
\widetilde{G}_{0}=O_{2}O_{1}:L^{1}\left( \left( -1,0\right) ;\ dv\right)
\rightarrow L^{1}\left( \left( -1,0\right) ;\ dv\right)
\end{equation*}%
are \textit{strictly} less than $1$; in particular, $G_{0}$ and $\widetilde{G%
}_{0}$ admit $1$ as an isolated eigenvalue associated respectively to the 
nonnegative eigenfunctions 
\begin{equation}
h_{0}\in L_{+}^{1}\left( \left( 0,+1\right) ;\ dv\right)
\label{Eigenfunction of Gzero}
\end{equation}%
and%
\begin{equation*}
\widetilde{h}_{0}\in L_{+}^{1}\left( \left( -1,0\right) ;\ dv\right)
\end{equation*}%
with%
\begin{equation}
O_{2}h_{0}=\widetilde{h}_{0}\text{ and }O_{1}\widetilde{h}_{0}=h_{0}.
\label{Link O1 and O2}
\end{equation}%
Furthermore, $T_{O}$ admits $0$ as eigenvalue (i.e. $\left(
e^{tT_{O}}\right) _{t\geq 0}$ has an invariant density) \textit{if and only
if} $\ $%
\begin{equation}
\int_{0}^{1}\frac{h_{0}(v)}{v}\,dv+\int_{-1}^{0}\frac{\widetilde{h}_{0}(v)}{%
\left\vert v\right\vert }\,dv<\infty ;  \label{Integrability condiition}
\end{equation}%
in this case, a space homogeneous invariant density is given by 
\begin{equation}
\psi _{0}(v)=\left\{ 
\begin{array}{c}
\frac{1}{v}h_{0}(v)\ \ (v>0) \\ 
\frac{1}{\left\vert v\right\vert }\widetilde{h}_{0}(v)\ \ (v<0);%
\end{array}%
\right.  \label{Principal eigenfunction}
\end{equation}%
see \cite{MK-Rudnicki} for all these results. We note that (\ref%
{Integrability condiition}) requires that the kernels of the diffuse parts $%
K_{i}$ vanish (in an appropriate sense) at $v=0$. For example, (\ref%
{Integrability condiition}) is \textit{not} satisfied in\ the purely diffuse
case (i.e.$\ \beta _{1}=\beta _{2}=1$) if 
\begin{equation}
\inf_{(v,v^{\prime })}k_{i}(v,v^{\prime })>0\ \ (i=1,2).
\label{Bounded from below}
\end{equation}%
Of course, the object of this paper is meaningful \textit{only if} $\left(
e^{tT_{O}}\right) _{t\geq 0}$ has an invariant density. A sufficient
condition ensuring (\ref{Integrability condiition}) is given in Theorem \ref%
{Theorem Invariant density} below, (see also Remark \ref{Remark Improv
existence inv density}). Actually, the present paper is built on much
stronger structural assumptions (see below) so that the existence of the
invariant density is guaranteed.

If $\left( e^{tT_{O}}\right) _{t\geq 0}$ \ is\ irreducible (a criterion is
given in Theorem \ref{Theorem irrreducibility}) then, under the
normalization $\int_{\Omega }\psi _{0}=1$, the invariant density $\psi _{0}$
is unique and the $C_{0}$-semigroup $\left( e^{tT_{O}}\right) _{t\geq 0}$ is
mean ergodic with ergodic projection%
\begin{equation}
P:g\rightarrow \left( \int_{\Omega }g\right) \psi _{0},
\label{Ergodic projection}
\end{equation}%
i.e.%
\begin{equation*}
L^{1}(\Omega )=Ker(T_{O})\oplus \overline{Ran(T_{O})}
\end{equation*}%
and the mean ergodic convergence 
\begin{equation*}
s\lim_{t\rightarrow +\infty }t^{-1}\int_{0}^{t}e^{sT_{O}}ds=P
\end{equation*}%
($s\lim_{t\rightarrow +\infty }$ refers to \textit{strong} limit) holds
where $P$ is the projection on $Ker(T_{O})$ along $\overline{Ran(T_{O})}.\ $

The convergence%
\begin{equation*}
s\lim_{t\rightarrow +\infty }e^{tT_{O}}=P\ 
\end{equation*}%
is proved in \cite{MK-Rudnicki} under the condition $\beta _{1}\beta _{2}>0$
by using a result (from \cite{Rudnicki})\ on \textit{partially} integral
semigroups; (a new approach of this result is considered in \cite{MK1}).

We point out that if (\ref{Integrability condiition}) were \textit{not}
satisfied then $\left( e^{tT_{O}}\right) _{t\geq 0}$ would be \textit{%
sweeping} with respect to the sets%
\begin{equation*}
\left( -a,a\right) \times \left[ \left( -1,-\varepsilon \right) \cup \left(
\varepsilon ,+1\right) \right] \ \ \ (\varepsilon >0)
\end{equation*}%
in the sense that the total mass of $e^{tT_{O}}g$ concentrates in the
vicinity of $v=0$ (i.e. around the tangential velocities) as $t\rightarrow
+\infty $,\ i.e.%
\begin{equation}
\int_{-1}^{-\varepsilon }\int_{-a}^{+a}\left\vert \left( e^{tT_{O}}g\right)
(x,v)\right\vert dx\,dv+\int_{\varepsilon }^{1}\int_{-a}^{+a}\left\vert
\left( e^{tT_{O}}g\right) (x,v)\right\vert dx\,dv\rightarrow 0\ 
\label{Sweeping phenomenon}
\end{equation}%
as $t\rightarrow +\infty ,$ \cite{MK-Rudnicki}. Actually, the following
alternative\textit{\ }holds: $\left( e^{tT_{O}}\right) _{t\geq 0}$ is either
strongly convergent if an invariant density exists or is sweeping in the
sense (\ref{Sweeping phenomenon}) otherwise; (i.e. a \textit{Foguel-like }%
alternative\textit{\ }holds, see \cite{Lasota}, Theorem 5. 10. 1,\ p. 130).\ 

Thus, we are concerned here with quantitative time asymptotics of strongly
convergent kinetic models;\textbf{\ (}the relevant open question for the%
\textit{\ }non-convergent kinetic models is whether we can \textit{quantify
their sweeping behaviour} (\ref{Sweeping phenomenon}), see Remark \ref%
{Remark 2 Open problems} (ii)). To this end\textbf{, }a key preliminary
result is that%
\begin{equation*}
r_{\sigma }\left( G_{is}\right) <1\ \ (s\in 
\mathbb{R}
\backslash \left\{ 0\right\} )
\end{equation*}%
($r_{\sigma }$ refers to spectral radius) and%
\begin{equation*}
\left\{ \lambda \in 
\mathbb{C}
;\ \func{Re}\lambda >0\right\} \ni \lambda \rightarrow (1-G_{\lambda
})^{-1}\in \mathcal{L}(L^{1}(\Omega ))
\end{equation*}%
extends continuously (in the strong operator topology) to $i%
\mathbb{R}
\backslash \left\{ 0\right\} .$ Various technical estimates are given in
this paper. We can \ summarize them in\textit{\ two key statements. }Let $%
k\in 
\mathbb{N}
,\ k\neq 0$; (the integer $k$ comes from the structural assumptions).

The first statement is: if 
\begin{equation*}
\int_{-a}^{a}\int_{-1}^{1}\frac{\left\vert g(x,v)\right\vert }{\left\vert
v\right\vert ^{k+1}}dxdv<+\infty
\end{equation*}%
then%
\begin{equation*}
\left\{ \lambda \in 
\mathbb{C}
;\ \func{Re}\lambda >0\right\} \ni \lambda \rightarrow (\lambda
-T_{O})^{-1}g\in L^{1}(\Omega )
\end{equation*}%
extends continuously to $i%
\mathbb{R}
\backslash \left\{ 0\right\} $ and, with%
\begin{equation}
F_{g}(s):=\ \lim_{\substack{ \lambda \rightarrow is  \\ \func{Re}\lambda >0}}%
(\lambda -T_{O})^{-1}g,  \label{Notation}
\end{equation}%
the map%
\begin{equation*}
\mathbb{R}
\backslash \left\{ 0\right\} \ni s\rightarrow F_{g}(s)\in L^{1}(\Omega )
\end{equation*}%
lies in $C^{k}\left( 
\mathbb{R}
\backslash \left\{ 0\right\} ;\ L^{1}(\Omega )\right) $ with the uniform $%
C^{k}$ estimates 
\begin{equation*}
\left\Vert \frac{d^{j}}{ds^{j}}F_{g}(s)\right\Vert \leq C\left(
\sum_{p=0}^{j+1}\left\Vert (1-G_{is})^{-1}\right\Vert ^{p}\right) \left\Vert 
\frac{g}{\left\vert v\right\vert ^{k+1}}\right\Vert \ \ \left( 0\leq j\leq
k,\ s\neq 0\right)
\end{equation*}%
where $C>0\ $is a constant, see Theorem \ref{Theorem estimate resolvent}.

The second statement is: 
\begin{equation*}
\sup_{\left\vert s\right\vert \geq \eta }\left\Vert
(1-G_{is})^{-1}\right\Vert <\infty ,\ \ \left( \eta >0\right)
\end{equation*}%
and there exists a constant $C>0$ such that 
\begin{equation*}
\left\Vert (1-G_{is})^{-1}\right\Vert \leq \frac{C}{s^{2}}\ (\text{for small 
}s\in 
\mathbb{R}
\backslash \left\{ 0\right\} ),
\end{equation*}%
see Theorem \ref{Theorem estimate resolvent G lamda}.

It follows that%
\begin{equation*}
\sup_{\left\vert s\right\vert \geq \eta }\left\Vert \frac{d^{j}}{ds^{j}}%
F_{g}(s)\right\Vert <+\infty \ \ (\eta >0,\ 0\leq j\leq k)\ 
\end{equation*}%
and there exists a constant $C>0$ such that%
\begin{equation*}
\left\Vert \frac{d^{j}}{ds^{j}}F_{g}(s)\right\Vert \leq \frac{C}{\left\vert
s\right\vert ^{2(j+1)}}\ \ (0\leq j\leq k)\ (\text{for small }s\in 
\mathbb{R}
\backslash \left\{ 0\right\} )
\end{equation*}%
and consequently a quantified version of Ingham's theorem (see Corollary \ref%
{Corollary Ingham Bis} below) implies 
\begin{equation}
\left\Vert e^{tT_{O}}g-\left( \int_{\Omega }g\right) \psi _{0}\right\Vert
=O\left( t^{-\frac{k}{2(k+1)+1}}\right) ,\ (t\rightarrow +\infty )
\label{Rate of convergence bis}
\end{equation}%
for any initial data $g\in D(T_{O})\ $such that $\left\Vert \frac{g}{%
\left\vert v\right\vert ^{k+1}}\right\Vert <+\infty ,$ see Theorem \ref%
{Theorem rates of convergence}.

Apart from Theorem \ref{Theorem rayon spectral G lambda} and Theorem \ref%
{Theorem estimate resolvent G lamda} (which hold under the general condition 
$\beta _{1}+\beta _{2}>0$), the paper is based upon a set of structural
assumptions (\ref{Unif boudedness derivaives})(\ref{Uniformly bounded
derivatives suppl})(\ref{Hyp 1})(\ref{Hyp sur O2}). A priori, Assumptions (%
\ref{Hyp 1})(\ref{Hyp sur O2}), which say that%
\begin{equation*}
\left\vert v\right\vert ^{-(k+1)}O_{1}\left\vert v\right\vert ^{k+1}\text{
and }\left\vert v\right\vert ^{-(k+1-p)}O_{2}\left\vert v\right\vert ^{k+1-p}%
\text{ are bounded }\left( 0\leq p\leq k\right) ,
\end{equation*}%
are \textit{checkable}. Indeed $\left\vert v\right\vert ^{-j}R_{i}\left\vert
v\right\vert ^{j}\ (i=1,2)$ are always bounded while the boundedness of $%
\left\vert v\right\vert ^{-j}K_{i}\left\vert v\right\vert ^{j}\ (i=1,2)$ is
a condition on the kernel of $K_{i}\ (i=1,2)$ in the neighborhood of $v=0$.
On the other hand, \ (\ref{Unif boudedness derivaives})(\ref{Uniformly
bounded derivatives suppl}) are checkable\textit{\ }only if $O_{1}$ is a
kernel operator (or if $O_{2}$ is a kernel operator); this explains why the
condition "$\beta _{1}=1$ \textit{or} $\beta _{2}=1$" appears in some
statements. We point out that the need for conditions on the kernels of $%
K_{i}\ (i=1,2)$ near the tangential velocities (i.e. $v=0$) is not
fortuitous since the existence of an invariant density already \textit{%
requires} a condition in the same spirit, see (\ref{Integrability condiition}%
) and (\ref{Bounded from below}).

The fact that $\frac{k}{2(k+1)+1}\rightarrow \frac{1}{2}\ \ (k\rightarrow
\infty )$ shows that if there exists $C_{j}>0$ such that 
\begin{equation*}
\left\Vert \frac{d^{j}}{ds^{j}}F_{g}(s)\right\Vert \leq \frac{C_{j}}{%
\left\vert s\right\vert ^{2(j+1)}}\ \ (0<\left\vert s\right\vert \leq 1,\ \
j\in 
\mathbb{N}
)
\end{equation*}%
(this occurs if the structural assumptions are satisfied for \textit{all} $%
k\in 
\mathbb{N}
$) then the quantified version of Ingham's tauberian theorem provides us
with the rate%
\begin{equation}
O\left( \frac{1}{t^{\frac{1}{2+\varepsilon }}}\right) ,\ \left( \varepsilon
>0\right) .  \label{Optimal rate via Ingham}
\end{equation}%
It is a priori unclear whether we can reach the limit rate $O\left( \frac{1}{%
\sqrt{t}}\right) $ or can go beyond this rate for the kinetic semigroups $%
\left( e^{tT_{O}}\right) _{t\geq 0}$ (note that much better rates of
convergence occur for \textit{bounded} initial datum in balls, see \cite{AG}%
\cite{Kuo-Liu-Tsai}). We refer to Remark \ref{Remark 1 Open problem} and
Remark \ref{Remark 2 Open problems} for different open problems suggested by
our construction. \textbf{\ }

Our paper is organized as follows:

In Section \ref{Section Ingham}, we give a corollary of a quantified version
of Ingham's theorem \cite{CS} which implies the rates of convergence%
\begin{equation*}
\left\Vert e^{tT}g-Pg\right\Vert =O\left( t^{-\frac{k}{\alpha (k+1)+1}%
}\right) ,\ t\rightarrow +\infty
\end{equation*}%
for bounded mean ergodic $C_{0}$-semigroups $\left( e^{tT}\right) _{t\geq 0}$
on a Banach space $X\ $with ergodic projection $P\ $(and generator $T$) $\ $%
for initial data 
\begin{equation}
g\in D(T)\cap \left( Ker(T)+Ran(T)\right)  \label{Choice of initial data}
\end{equation}%
provided that $F_{g}(s):=\lim_{\varepsilon \rightarrow 0_{+}}(\varepsilon
+is-T)^{-1}g\ $ $(s\neq 0)$ exists, lies in $C^{k}\left( 
\mathbb{R}
\backslash \left\{ 0\right\} ;X\right) $ for some $k\in 
\mathbb{N}
$ and satisfies the estimates%
\begin{equation*}
\sup_{\left\vert s\right\vert \geq 1}\left\Vert F_{g}^{(j)}(s)\right\Vert
<+\infty \text{ and }\left\Vert F_{g}^{(j)}(s)\right\Vert \leq C\left\vert
s\right\vert ^{-\alpha (j+1)},\ (0\leq j\leq k,\ 0<\left\vert s\right\vert
\leq 1).
\end{equation*}%
In Section \ref{Section Invariant density}, we give a sufficient criterion
for the existence of an invariant density of $\left( e^{tT_{O}}\right)
_{t\geq 0}$. A sufficient criterion of irreducibility of $\left(
e^{tT_{O}}\right) _{t\geq 0}$ is given in Section \ref{Section
Irreducibility}. The combination of the last two results implies that the $%
C_{0}$-semigroup $\left( e^{tT_{O}}\right) _{t\geq 0}$ is mean ergodic.
Because of the importance of (\ref{Choice of initial data}), a sufficient
criterion for a given $g\in L^{1}(\Omega )$ to belong to the range of $T_{O}$
is given in Section \ref{Section the range of generator}. Section \ref%
{Section Boundary spectrum} is devoted to $\sigma (T_{O})\cap i%
\mathbb{R}
,$ the boundary spectrum of the generator; while $0\in \sigma (T_{O})$ is
always true, we show that the imaginary axis is equal to the boundary
spectrum at least when $\beta _{1}=1\ $or $\beta _{2}=1$. In Section \ref%
{Section Object to estimate}, we explain why 
\begin{equation*}
F_{g}(s):=\lim_{\varepsilon \rightarrow 0_{+}}(\varepsilon +is-T_{O})^{-1}g\
\ \ (s\neq 0)
\end{equation*}%
exists and lies in $C^{k}\left( 
\mathbb{R}
\backslash \left\{ 0\right\} ;L^{1}(\Omega )\right) $ if $\int_{\Omega
}\left\vert g(x,v)\right\vert \left\vert v\right\vert ^{-(k+1)}dxdv<+\infty $
and if the boundary fluxes $h_{-a}^{+}$ and $h_{a}^{-}$ given by (\ref{Flux
positif})(\ref{Flux negatif}) with $\lambda =is\ \ (s\neq 0)$ are $C^{k}$
functions of $s\in 
\mathbb{R}
\backslash \left\{ 0\right\} \ $and their $j$th derivatives belong to
suitable \textit{weighted} spaces depending on $j\ (1\leq j\leq k)$. Such
conditions depends heavily on the existence of $(1-G_{is})^{-1}\ \ (s\neq 0)$
and its derivatives in $s$ (in suitable spaces) which are thus the
cornerstone of this work.\ The existence and estimate of $(1-G_{is})^{-1}\ \
(s\neq 0)$ are postponed until Section \ref{Section existence on imaginary
axis}. Under the general condition 
\begin{equation}
\beta _{1}+\beta _{2}>0,  \label{Very general assumption}
\end{equation}%
we show that $r_{\sigma }(\left\vert G_{is}\right\vert )<1\ (s\neq 0)$ where 
$\left\vert G_{is}\right\vert $ is the \textit{linear} \textit{modulus} of $%
G_{is}$ (see \cite{Chacon}). The proof relies on strict comparison of
spectral radii of positive operators in a context of domination \cite{MAREK}%
. We show also the key estimates 
\begin{equation*}
\sup_{\left\vert \lambda \right\vert \geq \eta }\left\Vert (1-G_{\lambda
})^{-1}\right\Vert <+\infty \ \text{ }(\eta >0,\ \func{Re}\lambda \geq 0)
\end{equation*}%
\begin{equation}
\left\Vert (1-G_{\lambda })^{-1}\right\Vert =O\left( \frac{1}{\left\vert 
\func{Im}\lambda \right\vert ^{2}}\right) \ \ (\lambda \rightarrow 0,\ \func{%
Re}\lambda \geq 0,\ \lambda \neq 0).  \label{Estimate near the origin}
\end{equation}%
The proof of (\ref{Estimate near the origin}) is quite involved and relies
on a second order expansion about $s=0$ (uniformly in $\varepsilon \geq 0)$
of\ a suitable function related to $%
\mathbb{R}
\ni s\rightarrow \left\Vert G_{\varepsilon +is}\right\Vert .$ In Section \ref%
{Section operators estimates}, we show by induction the key estimate of the 
\textit{derivatives} 
\begin{equation*}
\left\Vert \frac{d^{j}}{ds^{j}}(1-G_{is})^{-1}\right\Vert _{\mathcal{L}%
(L^{1}(dv);L^{1}(\left\vert v\right\vert ^{-k-1+j}dv))}\leq
C\sum_{l=0}^{j+1}\left\Vert (1-G_{is})^{-1}\right\Vert ^{l}\ \ \ (1\leq
j\leq k)
\end{equation*}%
by exploiting a differential equation satisfied by $%
\mathbb{R}
\backslash \left\{ 0\right\} \ni s\rightarrow (1-G_{is})^{-1}.$ It is at
this place that we need that at least one of the boundary conditions must be
completely diffuse.\ In Section \ref{Section Boundary fluxes}, we deduce the
estimate on the left flux 
\begin{equation*}
\left\Vert \frac{d^{j}h_{-a}^{+}}{d\lambda ^{j}}\right\Vert
_{L^{1}(\left\vert v\right\vert ^{-k-1+p}dv)}\leq C\left(
\sum_{l=0}^{j+1}\left\Vert (1-G_{is})^{-1}\right\Vert ^{l}\right) \left\Vert 
\frac{g}{\left\vert v\right\vert ^{k+1}}\right\Vert \ \ (0\leq j\leq k)
\end{equation*}%
and a similar estimate on the right flux $h_{a}^{-}.$ In Section \ref%
{Section Resolvent imaginary axis}, we sum up the previous estimates in the
statement%
\begin{equation*}
\left\Vert \frac{d^{j}}{ds^{j}}F_{g}(s)\right\Vert \leq C\left(
\sum_{l=0}^{j+1}\left\Vert (1-G_{is})^{-1}\right\Vert ^{l}\right) \left\Vert 
\frac{g}{\left\vert v\right\vert ^{k+1}}\right\Vert \ \ \left( 0\leq j\leq
k,\ s\neq 0\right) .
\end{equation*}%
Finally, in Section \ref{Section Rates of convergence}, we deduce the\textit{%
\ }algebraic estimates of (\ref{Notation}) on $i%
\mathbb{R}
\backslash \left\{ 0\right\} $ 
\begin{equation*}
\sup_{\left\vert s\right\vert \geq \eta }\left\Vert \frac{d^{j}}{ds^{j}}%
F_{g}(s)\right\Vert <+\infty ,\ \left( \eta >0,\ 0\leq j\leq k\right)
\end{equation*}%
\begin{equation*}
\left\Vert \frac{d^{j}}{ds^{j}}F_{g}(s)\right\Vert \leq \frac{C}{s^{2(j+1)}}%
\left\Vert \frac{g}{\left\vert v\right\vert ^{k+1}}\right\Vert \ \ (0\leq
j\leq k,\ s\rightarrow 0)
\end{equation*}%
and derive, from the quantified version of Ingham's theorem, the rate of
convergence 
\begin{equation*}
\left\Vert e^{tT_{O}}g-\left( \int_{\Omega }g\right) \psi _{0}\right\Vert
=O\left( t^{-\frac{k}{2(k+1)+1}}\right) ,\ (t\rightarrow +\infty )
\end{equation*}%
for any initial data $g\in D(T_{O})\cap L^{1}(\Omega ;\ \frac{dv}{\left\vert
v\right\vert ^{k+1}}).$

\textit{As far as we know, all these functional analytic results on
collisionless kinetic theory appear here for the first time. }Some open
problems suggested by our construction in slab geometry are pointed out in
Remark \ref{Remark 1 Open problem} and Remark \ref{Remark 2 Open problems}
below. We note that this work could be extended to \textit{non-}%
monoenergetic free transport equations in slab geometry with more general
reflection operators $R_{i}\ (i=1,2)$, see Remark \ref{Remark Generalisation
1}. However, its extension to multidimensional-space geometries is\ an open
problem, see Remark \ref{Remark Generalisation 2}. For the sake of
simplicity, in all the paper, we will denote by the same symbol $C$ various
positive constants occuring in our different proofs and statements. The
authors thank the referee for constructive comments.

\section{\label{Section Ingham}A quantified version of Ingham's theorem}

Let $X$ \ be a complex Banach space. For any $f\in L^{\infty }\left( 
\mathbb{R}
_{+},\ X\right) ,$ we define its Laplace transform by%
\begin{equation*}
\widehat{f}(\lambda )=\int_{0}^{+\infty }e^{-\lambda t}f(t)dt\ \ (\func{Re}%
\lambda >0).
\end{equation*}%
Let $\eta \in 
\mathbb{R}
.\ $We say that $i\eta $ is a weakly regular point for $\widehat{f}$ \ if
there exist $\varepsilon >0$ and $h$ $\in L^{1}(\left( \eta -\varepsilon
,\eta +\varepsilon \right) ,X)$ such that 
\begin{equation*}
\widehat{f}(\alpha +i.)\rightarrow h(.)\ \text{in the distributional sense
on }\left( \eta -\varepsilon ,\eta +\varepsilon \right) \ \text{as\ }\alpha
\rightarrow 0_{+}.
\end{equation*}%
The weak half-line spectrum $sp_{w}(f)$ of $f$ is defined as the set of all
real numbers $\eta $ such that $i\eta \ $is not weakly regular for $\widehat{%
f}$. Then $sp_{w}(f)$ is a closed subset of $%
\mathbb{R}
$ and there exists $F\in L_{loc}^{1}(%
\mathbb{R}
\backslash sp_{w}(f),X)$ such that 
\begin{equation}
\widehat{f}(\alpha +i.)\rightarrow F(.)\ \text{in the distributional sense
on }%
\mathbb{R}
\backslash sp_{w}(f)\ \text{as\ \ }\alpha \rightarrow 0_{+},
\label{Distributional limit}
\end{equation}%
see e.g. \cite{Arendt} Lemma 4.9.9, p. 326. We give now a quantified version
of the classical Ingham's tauberian theorem (see e.g. \cite{Arendt} Theorem
4.9.5, p. 327). This version is a special case of (\cite{CS} Theorem 2.13
(a)). \textbf{\ }

\begin{theorem}
\label{Theorem Ingham}Let $X$ be a complex Banach space and suppose that $f$
belongs to $L^{\infty }\left( 
\mathbb{R}
_{+},\ X\right) $, is Lipschitz continuous and $\sup_{t\geq 0}\left\Vert
\int_{0}^{t}f(s)ds\right\Vert <+\infty .$ Suppose furthermore that $%
sp_{w}(f)\subset \left\{ 0\right\} $ and $F$ (given by (\ref{Distributional
limit})) lies in $C^{k}\left( 
\mathbb{R}
\backslash \left\{ 0\right\} ;X\right) $ for some $k\in 
\mathbb{N}
.$ If$\ \sup_{\left\vert s\right\vert \geq 1}\left\Vert
F^{(j)}(s)\right\Vert <+\infty \ \ (0\leq j\leq k)$ and if%
\begin{equation*}
\left\Vert F^{(j)}(s)\right\Vert \leq C\left\vert s\right\vert ^{-\alpha
(j+1)},\ ((0\leq j\leq k,\ 0<s\leq 1)
\end{equation*}%
for some constants $C>0,\ \alpha \geq 1$ then%
\begin{equation*}
\left\Vert f(t)\right\Vert =O\left( t^{-\frac{k}{\alpha (k+1)+1}}\right) ,\
\left( t\rightarrow +\infty \right) .
\end{equation*}
\end{theorem}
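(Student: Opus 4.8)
The plan is to deduce the statement directly from the general quantified Tauberian theorem of \cite{CS} (Theorem 2.13(a)): the only real work is to recast the hypotheses in the form required there and then to read off the decay exponent by specialising the abstract growth function occurring in that result to a power function. Concretely, \cite{CS} Theorem 2.13(a) applies to a bounded Lipschitz function $f\in L^{\infty}(\mathbb{R}_{+};X)$ with bounded primitive and $sp_{w}(f)\subset\{0\}$ whose boundary function $F$ is $C^{k}$ on $\mathbb{R}\setminus\{0\}$, and it bounds $\|f(t)\|$ in terms of a nondecreasing function dominating the $\|F^{(j)}(s)\|$ near $s=0$ together with the boundedness of the $\|F^{(j)}(s)\|$ for $|s|\ge 1$. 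Each of these is among the present hypotheses: boundedness, Lipschitz continuity, $\sup_{t}\|\int_{0}^{t}f\|<\infty$ and $sp_{w}(f)\subset\{0\}$ are assumed outright; $F\in C^{k}(\mathbb{R}\setminus\{0\};X)$ and $\sup_{|s|\ge1}\|F^{(j)}(s)\|<\infty$ are assumed; and the estimate $\|F^{(j)}(s)\|\le C|s|^{-\alpha(j+1)}$ for $0<s\le1$ supplies the required power-type majorant on the $j$th derivative. Inserting the power function $r\mapsto r^{\alpha}$ into the rate produced by \cite{CS} Theorem 2.13(a) yields exactly $O(t^{-k/(\alpha(k+1)+1)})$, which is the asserted rate.

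For transparency, here is the mechanism behind that rate. Writing $u(t)=\int_{0}^{t}f(s)\,ds$, which is bounded, one has $\widehat{f}(\lambda)=\lambda\widehat{u}(\lambda)$ for $\func{Re}\lambda>0$; since $sp_{w}(f)\subset\{0\}$, the boundary values $F(s)=\lim_{\alpha\to0_{+}}\widehat{f}(\alpha+is)$ exist for $s\neq0$ and reconstruct $f$ through a regularised Fourier inversion formula, the Lipschitz continuity of $f$ being what makes the high-frequency part of this representation absolutely convergent after sufficiently many integrations by parts. One then fixes a scale $\delta\in(0,1]$ and splits the frequency integral over the three ranges $|s|\le\delta$, $\delta\le|s|\le1$ and $|s|\ge1$. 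On $|s|\ge1$ one integrates by parts $k$ times, using $\sup_{|s|\ge1}\|F^{(j)}(s)\|<\infty$ and the Lipschitz bound, to obtain a contribution of order $t^{-k}$. On $\delta\le|s|\le1$ one again integrates by parts $k$ times: each step gains a factor $t^{-1}$, while the boundary terms at $s=\delta$ and the terminal remainder integral are controlled by the derivative bounds $\|F^{(j)}(s)\|\le C|s|^{-\alpha(j+1)}$, producing terms like $t^{-(j+1)}\delta^{-\alpha(j+1)}$ and $t^{-k}\delta^{\,1-\alpha(k+1)}$. On $|s|\le\delta$ one exploits $\widehat{f}=\lambda\widehat{u}$ together with boundedness of $u$ to bound the low-frequency contribution by a favourable (increasing) power of $\delta$, up to mild $t$-dependent factors.

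Collecting these estimates gives a bound for $\|f(t)\|$ that is, on one side, a positive power of $\delta$ and, on the other, a sum of negative powers of $\delta$ multiplied by negative powers of $t$. Optimising over the scale, i.e.\ letting $\delta=\delta(t)\to0$ appropriately so as to balance the competing contributions (which leads to $\delta$ a suitable negative power of $t$), produces $\|f(t)\|=O(t^{-k/(\alpha(k+1)+1)})$.

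The step I expect to be the main obstacle is the treatment of the singularity at $s=0$: one must turn the qualitative information $sp_{w}(f)\subset\{0\}$ and $\sup_{t}\|\int_{0}^{t}f\|<\infty$ into a \emph{quantitative} bound on the low-frequency contribution that is compatible with the power-type derivative estimates near $s=0$, and one must track all boundary terms generated by the $k$ integrations by parts carefully enough that the final optimisation in $\delta$ yields precisely the exponent $k/(\alpha(k+1)+1)$ and not something weaker. This delicate bookkeeping is exactly what is packaged inside \cite{CS} Theorem 2.13(a), so in practice one simply invokes that result.
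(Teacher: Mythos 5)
Your proposal is correct and takes essentially the same approach as the paper: the paper itself offers no proof of this theorem but simply notes, immediately after the statement, that it is a special case of Theorem 2.13(a) of Chill--Seifert \cite{CS}, and you do exactly that, with an added (plausible) heuristic sketch of the Fourier-splitting mechanism underlying the cited result.
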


\textbf{\ }Now suppose that $\left( e^{tT}\right) _{t\geq 0}$ is a bounded $%
C_{0}$-semigroup with generator $T$ on $X$, and that $f(t)=e^{tT}g\ \ (t\geq
0)$ for some $g\in X.$ Then $f$ is a bounded continuous function. It is
Lipschitz continuous if $g\in D(T)\ $\ and has uniformly bounded primitive
if $g\in Ran(T).$ Recall that the Laplace of $f$ is given by $\left( 
\mathcal{L}f\right) (\lambda )=R(\lambda ,T)g\ \ $for$\ \func{Re}\lambda >0.$
Hence, by a calculation similar to that in (\cite{CS},$\ $Eq. (1.2)) we see
that\ $f$ satisfies the assumptions of Theorem \ref{Theorem Ingham} provided
that $g\in D(T)\cap Ran(T)$ and $R(\lambda ,T)g$ extends continuously \ to a
sufficiently smooth function on $i%
\mathbb{R}
\backslash \left\{ 0\right\} .$ Note that, crucially for us, this is
possible for particular initial values $g\in X$ even if $i%
\mathbb{R}
\subset \sigma (T)$. We obtain the following corollary of Theorem \ref%
{Theorem Ingham}.

\begin{corollary}
\label{Corollary Ingham}Let $\left( e^{tT}\right) _{t\geq 0}$ be a bounded $%
C_{0}$-semigroup with generator $T$ on a complex Banach space $X$ and let $%
g\in D(T)\cap Ran(T).$ Suppose that $R(\lambda ,T)g$\ \ ($\func{Re}\lambda
>0)$ extends continuously to $i%
\mathbb{R}
\backslash \left\{ 0\right\} $ and that 
\begin{equation*}
F_{g}(s):=\lim_{\varepsilon \rightarrow 0_{+}}R(is+\varepsilon ,T)g\ 
\end{equation*}%
lies in $C^{k}\left( 
\mathbb{R}
\backslash \left\{ 0\right\} ;X\right) $ for some $k\in 
\mathbb{N}
.$ If$\ \sup_{\left\vert s\right\vert \geq 1}\left\Vert
F_{g}^{(j)}(s)\right\Vert <+\infty \ \ (0\leq j\leq k)$ and if%
\begin{equation*}
\left\Vert F_{g}^{(j)}(s)\right\Vert \leq C\left\vert s\right\vert ^{-\alpha
(j+1)},\ \left( 0\leq j\leq k,\ 0<s\leq 1\right)
\end{equation*}%
for some constants $C>0,\ \alpha \geq 1$ then 
\begin{equation*}
\left\Vert e^{tT}g\right\Vert =O\left( t^{-\frac{k}{\alpha (k+1)+1}}\right)
,\ \left( t\rightarrow +\infty \right) .
\end{equation*}
\end{corollary}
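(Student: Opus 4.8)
The plan is to deduce Corollary \ref{Corollary Ingham} directly from Theorem \ref{Theorem Ingham} by verifying that the $X$-valued function $f(t):=e^{tT}g$ satisfies all of the latter's hypotheses. The first step is to record the elementary semigroup facts that were already flagged in the paragraph preceding the statement: since $(e^{tT})_{t\ge 0}$ is bounded, $f$ is bounded and continuous; since $g\in D(T)$, we have $\frac{d}{dt}e^{tT}g=e^{tT}Tg$, which is bounded, so $f$ is Lipschitz continuous; and since $g\in Ran(T)$, writing $g=Tu$ for some $u\in D(T)$ gives $\int_0^t f(s)\,ds=\int_0^t e^{sT}Tu\,ds=e^{tT}u-u$, which is bounded in $t$ by boundedness of the semigroup. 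Thus $\sup_{t\ge 0}\bigl\|\int_0^t f(s)\,ds\bigr\|<+\infty$.

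The second step is to identify the Laplace transform and its distributional boundary values with the resolvent. For $\func{Re}\lambda>0$ one has the standard identity $\widehat{f}(\lambda)=\int_0^{+\infty}e^{-\lambda t}e^{tT}g\,dt=R(\lambda,T)g$. By hypothesis $R(\lambda,T)g$ extends continuously to $i\mathbb{R}\backslash\{0\}$, with limit $F_g(s)=\lim_{\varepsilon\to 0_+}R(is+\varepsilon,T)g$. Since pointwise (indeed locally uniform) convergence along horizontal lines to a continuous function implies convergence in the distributional sense on $\mathbb{R}\backslash\{0\}$, we conclude that $sp_w(f)\subset\{0\}$ and that the function $F$ appearing in (\ref{Distributional limit}) agrees on $\mathbb{R}\backslash\{0\}$ with $F_g$. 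In particular $F\in C^k(\mathbb{R}\backslash\{0\};X)$ by assumption, and $F^{(j)}=F_g^{(j)}$ there.

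The third step is simply to feed the quantitative hypotheses through: the assumption $\sup_{|s|\ge 1}\|F_g^{(j)}(s)\|<+\infty$ for $0\le j\le k$ gives $\sup_{|s|\ge 1}\|F^{(j)}(s)\|<+\infty$, and the bound $\|F_g^{(j)}(s)\|\le C|s|^{-\alpha(j+1)}$ for $0<s\le 1$ gives the corresponding bound on $F^{(j)}$. Theorem \ref{Theorem Ingham} then applies verbatim and yields $\|f(t)\|=\|e^{tT}g\|=O\bigl(t^{-\frac{k}{\alpha(k+1)+1}}\bigr)$ as $t\to+\infty$, which is exactly the claim.

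There is really no serious obstacle here; the content is in Theorem \ref{Theorem Ingham} (i.e.\ in \cite{CS}), and the corollary is a packaging of it in semigroup language. The only point requiring a modicum of care is the passage from continuous boundary convergence of $R(\lambda,T)g$ to the distributional convergence needed to conclude $sp_w(f)\subset\{0\}$ and to identify $F$ with $F_g$; this is the "calculation similar to (\cite{CS}, Eq. (1.2))" alluded to above, and it is routine. One should also note that boundedness of the semigroup together with $g\in Ran(T)$ (rather than merely $\overline{Ran(T)}$) is what makes the primitive bounded, and that $D(T)\cap Ran(T)$ is the natural hypothesis under which all of this goes through; no spectral-gap or sectoriality assumption on $T$ is used, which is precisely why the corollary remains applicable when $i\mathbb{R}\subset\sigma(T)$.
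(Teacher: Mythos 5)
Your argument is correct and coincides with the paper's own derivation, which is given informally in the paragraph preceding the statement rather than as a separate proof: the paper records the same three facts (boundedness, Lipschitz continuity from $g\in D(T)$, bounded primitive from $g\in Ran(T)$), identifies $\widehat{f}(\lambda)=R(\lambda,T)g$, and invokes Theorem \ref{Theorem Ingham}. You have merely spelled out the explicit computations and the step from continuous boundary convergence of the resolvent to the distributional convergence defining $sp_w(f)$ and $F$, which the paper compresses into ``a calculation similar to that in \cite{CS}, Eq.\ (1.2).''
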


In this paper, we need the following simple consequence of Corollary \ref%
{Corollary Ingham}.

\begin{corollary}
\label{Corollary Ingham Bis}Let $\left( e^{tT}\right) _{t\geq 0}$ be a
bounded mean ergodic $C_{0}$-semigroup with generator $T$ on a complex
Banach space $X$ with ergodic projection $P.\ $Let%
\begin{equation*}
g\in D(T)\cap \left( Ker(T)+Ran(T)\right) .
\end{equation*}%
Suppose that $R(\lambda ,T)g$\ \ ($\func{Re}\lambda >0)$ extends
continuously to $i%
\mathbb{R}
\backslash \left\{ 0\right\} $ and that 
\begin{equation*}
F_{g}(s):=\lim_{\varepsilon \rightarrow 0_{+}}R(is+\varepsilon ,T)g\ 
\end{equation*}%
lies in $C^{k}\left( 
\mathbb{R}
\backslash \left\{ 0\right\} ;X\right) $ for some $k\in 
\mathbb{N}
.$ If$\ \sup_{\left\vert s\right\vert \geq 1}\left\Vert
F_{g}^{(j)}(s)\right\Vert <+\infty \ \ (0\leq j\leq k)$ and if%
\begin{equation*}
\left\Vert F_{g}^{(j)}(s)\right\Vert \leq C\left\vert s\right\vert ^{-\alpha
(j+1)},\ \left( 0\leq j\leq k,\ 0<s\leq 1\right)
\end{equation*}%
for some constants $C>0,\ \alpha \geq 1$ then%
\begin{equation*}
\left\Vert e^{tT}g-Pg\right\Vert =O\left( t^{-\frac{k}{\alpha (k+1)+1}%
}\right) ,\ \left( t\rightarrow +\infty \right) .
\end{equation*}
\end{corollary}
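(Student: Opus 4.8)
The plan is to reduce this to Corollary \ref{Corollary Ingham} by splitting off the ergodic component of $g$. Since $\left( e^{tT}\right) _{t\geq 0}$ is mean ergodic with ergodic projection $P$, we have the direct sum decomposition $X = \operatorname{Ker}(T) \oplus \overline{\operatorname{Ran}(T)}$, and $P$ is precisely the projection onto $\operatorname{Ker}(T)$ along $\overline{\operatorname{Ran}(T)}$. Given $g \in D(T) \cap \left( \operatorname{Ker}(T) + \operatorname{Ran}(T)\right)$, write $g = g_0 + g_1$ with $g_0 \in \operatorname{Ker}(T)$ and $g_1 \in \operatorname{Ran}(T)$; then necessarily $g_0 = Pg$ and $g_1 = g - Pg$. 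First I would check that $g_1 \in D(T)$: indeed $g \in D(T)$ by hypothesis and $g_0 \in \operatorname{Ker}(T) \subset D(T)$, so $g_1 = g - g_0 \in D(T)$. Hence $g_1 \in D(T) \cap \operatorname{Ran}(T)$, which is exactly the hypothesis required to apply Corollary \ref{Corollary Ingham} to $g_1$.

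Next I would track how the resolvent and its boundary limit behave under this splitting. For $\operatorname{Re}\lambda > 0$ we have $R(\lambda,T)g_0 = \lambda^{-1} g_0$ since $Tg_0 = 0$, so $R(\lambda,T)g = \lambda^{-1} g_0 + R(\lambda,T)g_1$. The term $\lambda^{-1} g_0$ extends to the smooth function $s \mapsto (is)^{-1} g_0$ on $\mathbb{R}\setminus\{0\}$, which lies in $C^k\left(\mathbb{R}\setminus\{0\};X\right)$ for every $k$, is bounded with all derivatives bounded for $|s|\geq 1$, and satisfies $\left\| \frac{d^j}{ds^j}\left((is)^{-1}g_0\right)\right\| \leq C|s|^{-(j+1)} \leq C|s|^{-\alpha(j+1)}$ for $0 < s \leq 1$ (using $\alpha \geq 1$). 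Consequently $R(\lambda,T)g_1 = R(\lambda,T)g - \lambda^{-1}g_0$ also extends continuously to $i\mathbb{R}\setminus\{0\}$, its boundary limit $F_{g_1}(s) = F_g(s) - (is)^{-1}g_0$ lies in $C^k\left(\mathbb{R}\setminus\{0\};X\right)$, and the estimates $\sup_{|s|\geq 1}\left\| F_{g_1}^{(j)}(s)\right\| < +\infty$ and $\left\| F_{g_1}^{(j)}(s)\right\| \leq C|s|^{-\alpha(j+1)}$ for $0 < s \leq 1$ hold (possibly with a larger constant $C$), by the triangle inequality applied to the two pieces.

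Then Corollary \ref{Corollary Ingham} applies verbatim to $g_1$ and yields $\left\| e^{tT}g_1\right\| = O\left( t^{-\frac{k}{\alpha(k+1)+1}}\right)$ as $t \to +\infty$. Finally, since $g_0 \in \operatorname{Ker}(T)$ we have $e^{tT}g_0 = g_0 = Pg$ for all $t \geq 0$, hence $e^{tT}g - Pg = e^{tT}g_1$, and the desired rate follows immediately. There is no real obstacle here: the only points requiring a line of care are that $g_1 \in D(T)$ (so that the Lipschitz-continuity hypothesis of Theorem \ref{Theorem Ingham} is available through Corollary \ref{Corollary Ingham}) and that subtracting the explicit rank-at-most-the-dimension-of-$\operatorname{Ker}(T)$ term $(is)^{-1}g_0$ does not spoil the $C^k$ regularity or the algebraic bounds near $s = 0$ — and it does not, precisely because $\alpha \geq 1$ makes the pole of order $1$ at the origin no worse than the allowed $|s|^{-\alpha(j+1)}$ growth.
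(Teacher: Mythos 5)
Your argument is correct and is precisely the natural reduction the authors have in mind: the paper gives no proof of Corollary~\ref{Corollary Ingham Bis}, merely introducing it as ``the following simple consequence of Corollary~\ref{Corollary Ingham},'' and your decomposition $g = Pg + (g-Pg)$, the observation $R(\lambda,T)Pg=\lambda^{-1}Pg$, the check that $\alpha\ge 1$ lets the extra simple pole be absorbed in the assumed bounds, and the conclusion via $e^{tT}g - Pg = e^{tT}(g-Pg)$ is exactly the intended deduction. No gaps.
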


\begin{remark}
\label{Remark Ingham C infini}Theorem \ref{Theorem Ingham} can be
complemented by the statement that if $F\in C^{\infty }\left( 
\mathbb{R}
\backslash \left\{ 0\right\} ;X\right) $\ satisfies $\sup_{|s|\ge1}\|F^{(j)}(s)\|<\infty$ ($j\in\mathbb{Z}_+$) and if there exists a constant $%
C>0 $\ such that 
\begin{equation*}
\left\Vert F^{(j)}(s)\right\Vert \leq C\ j!\left\vert s\right\vert ^{-\alpha
(j+1)+1},\ (j\in 
\mathbb{Z}_+
,\ 0<s\leq 1)
\end{equation*}%
then $\left\Vert f(t)\right\Vert =O(\left( \frac{\ln (t)}{t}\right) ^{\frac{1%
}{\alpha }}),\ (t\rightarrow +\infty )$, (see \cite{CS} Theorem 2.13 (b)). 
\textbf{\ }
\end{remark}

\section{\label{Section Invariant density}On existence of invariant density}

We complement a result from \cite{MK-Rudnicki}.

\begin{theorem}
\label{Theorem Invariant density}We assume that either $O_{1}=K_{1}$ and
both $\left\vert v\right\vert ^{-1}K_{1}$ and $\left\vert v\right\vert
^{-1}K_{2}\left\vert v\right\vert $ are bounded or $O_{2}=K_{2}$ and both$\
\left\vert v\right\vert ^{-1}K_{2}$ and $\left\vert v\right\vert
^{-1}K_{1}\left\vert v\right\vert $ are bounded. Then (\ref{Integrability
condiition}) is satisfied and consequently $\left( e^{tT_{O}}\right) _{t\geq
0}$ has an invariant density.
\end{theorem}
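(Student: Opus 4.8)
The plan is to verify the integrability condition \eqref{Integrability condiition}, i.e. that $\int_0^1 h_0(v)/v\,dv + \int_{-1}^0 \widetilde h_0(v)/|v|\,dv < \infty$, where $h_0\in L^1_+((0,1);dv)$ and $\widetilde h_0\in L^1_+((-1,0);dv)$ are the fixed points of $G_0 = O_1O_2$ and $\widetilde G_0 = O_2O_1$ respectively, linked by \eqref{Link O1 and O2}: $O_2 h_0 = \widetilde h_0$ and $O_1 \widetilde h_0 = h_0$. I will treat the case $O_1 = K_1$ with $|v|^{-1}K_1$ and $|v|^{-1}K_2|v|$ bounded; the case $O_2 = K_2$ is symmetric (swap the roles of the two half-intervals and of $1$ and $2$ throughout).

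First I would control the $v\to 0$ behaviour of $h_0$. Since $O_1 = K_1$, we have $h_0 = O_1\widetilde h_0 = K_1\widetilde h_0$, and because $\widetilde h_0 = O_2 h_0$ with $\|O_2\|\le 1$ on the positive cone, $\|\widetilde h_0\|_{L^1((-1,0);dv)} = \|h_0\|_{L^1((0,1);dv)} =: m < \infty$. Hence
\begin{equation*}
\int_0^1 \frac{h_0(v)}{v}\,dv = \int_0^1 \frac{(K_1\widetilde h_0)(v)}{v}\,dv = \bigl\| |v|^{-1} K_1 \widetilde h_0 \bigr\|_{L^1((0,1);dv)} \le \bigl\| |v|^{-1}K_1 \bigr\|\; \| \widetilde h_0 \|_{L^1((-1,0);dv)} < \infty,
\end{equation*}
using the assumed boundedness of $|v|^{-1}K_1 : L^1((-1,0);dv)\to L^1((0,1);dv)$ and positivity (so that the $L^1$ norm of $|v|^{-1}h_0$ is exactly the integral of $h_0(v)/v$). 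This disposes of the first term.

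For the second term, I would use the other link: $\widetilde h_0 = O_2 h_0$, so $|v|^{-1}\widetilde h_0 = |v|^{-1}O_2 h_0 = \bigl(|v|^{-1}O_2|v|\bigr)\bigl(|v|^{-1}h_0\bigr)$, and therefore
\begin{equation*}
\int_{-1}^0 \frac{\widetilde h_0(v)}{|v|}\,dv = \bigl\| |v|^{-1}\widetilde h_0 \bigr\|_{L^1((-1,0);dv)} = \bigl\| (|v|^{-1}O_2|v|)(|v|^{-1}h_0) \bigr\| \le \bigl\| |v|^{-1}O_2|v| \bigr\|\; \bigl\| |v|^{-1}h_0 \bigr\|_{L^1((0,1);dv)},
\end{equation*}
which is finite by the previous step together with the assumed boundedness of $|v|^{-1}O_2|v| = |v|^{-1}(\alpha_2 R_2 + \beta_2 K_2)|v|$ — here note $|v|^{-1}R_2|v|$ is automatically bounded (it acts as $\varphi(v)\mapsto \varphi(-v)$ up to the harmless weight ratio $|{-v}|/|v| = 1$), and $|v|^{-1}K_2|v|$ is bounded by hypothesis, so $|v|^{-1}O_2|v|$ is bounded as a convex combination. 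Adding the two finite contributions gives \eqref{Integrability condiition}, and the existence of an invariant density then follows from the cited characterization in \cite{MK-Rudnicki}.

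The only mild subtlety — and the step I would be most careful about — is the identification of the weighted integrals $\int h_0(v)/v\,dv$ etc. with operator-norm estimates: this requires $h_0$ and $\widetilde h_0$ to be genuinely nonnegative (which they are, being Perron eigenfunctions of the stochastic operators $G_0$, $\widetilde G_0$, whose essential spectral radii are $<1$ under \eqref{Beta1 + Beta2 >0}) so that $\||v|^{-1}h_0\|_{L^1(dv)} = \int h_0(v)|v|^{-1}dv$ exactly, and it requires the weighted operators to map the relevant unweighted $L^1$ space boundedly into the relevant unweighted $L^1$ space, which is precisely what the hypotheses assert. No estimate is needed near $v=\pm1$ since the weights are bounded there; all the work is at the tangential velocities $v=0$, consistent with the remark following \eqref{Bounded from below} that a condition on the kernels of $K_i$ near $v=0$ is unavoidable.
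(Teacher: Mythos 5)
Your proof is correct and follows essentially the same route as the paper: first deduce $h_0\in L^1(dv/|v|)$ from $h_0=K_1\widetilde h_0$ and the boundedness of $|v|^{-1}K_1$, then propagate this to $\widetilde h_0=O_2h_0$ using the invariance of $L^1(dv/|v|)$ under $R_2$ together with the boundedness of $|v|^{-1}K_2|v|$. The only cosmetic difference is that the paper starts from $G_0h_0=h_0$, i.e. $K_1O_2h_0=h_0$, rather than from $h_0=K_1\widetilde h_0$, but these are the same identity via $O_2h_0=\widetilde h_0$.
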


\begin{proof}
Without loss of generality, we may suppose that $\beta _{1}=1$ and therefore 
$\alpha _{1}=0.\ $We know that $G_{0}h_{0}=h_{0}$ and $\widetilde{G}%
_{0}h_{0}=\widetilde{h}_{0}$. Thus 
\begin{equation*}
K_{1}\left( \alpha _{2}R_{2}+\beta _{2}K_{2}\right) h_{0}=h_{0}
\end{equation*}%
so $\int_{0}^{1}\frac{h_{0}(v)}{v}\,dv<\infty .$ By (\ref{Link O1 and O2}) 
\begin{equation*}
\widetilde{h}_{0}=O_{2}h_{0}=\alpha _{2}R_{2}h_{0}+\beta _{2}K_{2}h_{0}.
\end{equation*}%
By assumption $K_{2}h_{0}\in L^{1}\left( \frac{\ dv}{\left\vert v\right\vert 
}\right) $ if $h_{0}\in L^{1}\left( \frac{\ dv}{\left\vert v\right\vert }%
\right) .$ Since $L^{1}\left( \frac{\ dv}{\left\vert v\right\vert }\right) $
is invariant under $R_{2}$ we have $\widetilde{h}_{0}\in L^{1}\left( \frac{\
dv}{\left\vert v\right\vert }\right) .$
\end{proof}

\begin{remark}
\label{Remark Improv existence inv density}The assumptions in Theorem \ref%
{Theorem Invariant density} can be weakened. For instance, we can replace
the boundedness of $\left\vert v\right\vert ^{-1}K_{1}$ by the assumption
that%
\begin{equation*}
\left\vert v\right\vert ^{-1}K_{1}R_{2}K_{1}\text{ and }\left\vert
v\right\vert ^{-1}K_{1}K_{2}K_{1}\text{ are bounded.}
\end{equation*}%
Indeed, since $\left[ K_{1}\left( \alpha _{2}R_{2}+\beta _{2}K_{2}\right) %
\right] ^{2}h_{0}=h_{0}$ then $h_{0}\in L^{1}\left( \frac{\ dv}{\left\vert
v\right\vert }\right) $ provided that $\left\vert v\right\vert
^{-1}K_{1}\left( \alpha _{2}R_{2}+\beta _{2}K_{2}\right) K_{1}$ is bounded.
\end{remark}

\section{\label{Section Irreducibility}On irreducibility of $%
(e^{tT_{O}})_{t\geq 0}$}

We give two complementary irreducibility criteria.

\begin{theorem}
\label{Theorem irrreducibility}We assume that either $O_{1}=K_{1}$ or $%
O_{2}=K_{2}$. If 
\begin{equation*}
G_{0}=O_{1}O_{2}:L^{1}\left( \left( 0,+1\right) ;\ dv\right) \rightarrow
L^{1}\left( \left( 0,+1\right) ;\ dv\right)
\end{equation*}%
is irreducible and if $O_{2}$ is strict positivity preserving in the sense
that 
\begin{equation*}
h(v)>0\ \text{a.e.}\ \Longrightarrow \left( O_{2}h\right) (v)>0\ \text{a.e.}
\end{equation*}%
then $(e^{tT_{O}})_{t\geq 0}$ is irreducible.
\end{theorem}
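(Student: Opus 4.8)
The plan is to reduce the irreducibility of the transport semigroup to a positivity/support-propagation argument on the boundary operator $O_1O_2$ and its iterates, exploiting the explicit form of the resolvent. Recall that a positive $C_0$-semigroup is irreducible iff for every $g\in L^1_+(\Omega)\setminus\{0\}$ and every $x^\ast\in L^\infty_+(\Omega)\setminus\{0\}$ there is $\lambda>0$ with $\langle R(\lambda,T_O)g,x^\ast\rangle>0$; equivalently, the only closed ideals of $L^1(\Omega)$ invariant under $(e^{tT_O})_{t\ge0}$ are $\{0\}$ and $L^1(\Omega)$. So first I would fix a measurable set $A\subset\Omega$ of positive measure such that the ideal $I_A=\{g:g=0\text{ a.e. on }\Omega\setminus A\}$ is invariant under the semigroup (equivalently under $R(\lambda,T_O)$ for $\lambda>0$), and show $A=\Omega$ up to null sets.

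Next I would feed a nonnegative $g$ supported in $A$ into the resolvent formulas (\ref{Resolvante positive})--(\ref{Resolvante negative}) together with the flux formulas (\ref{Flux positif})--(\ref{Flux negatif}), using $(1-G_\lambda)^{-1}=\sum_{n\ge0}G_\lambda^n$ (valid since $\|G_\lambda\|\le e^{-4a\,\mathrm{Re}\lambda}<1$ for $\mathrm{Re}\,\lambda>0$). The key structural point is that the characteristic flow $x\mapsto x\pm tv$ transports mass along characteristics; a point $(x,v)$ with $v>0$ receives a contribution either directly from $g$ on the segment to its left, or—after one reflection/diffusion at $x=-a$—from the incoming flux $h_{-a}^+$, which via (\ref{Flux positif}) is built from $O_1$, $O_2$ and the $G_\lambda^n$ applied to integrals of $g$ against positive kernels. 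Because $O_2$ is strict positivity preserving and $G_0=O_1O_2$ is irreducible, the iterates $G_0^n$ spread any nonzero nonnegative velocity profile over all of $(0,1)$, and hence (combined with the assumption $O_1=K_1$ or $O_2=K_2$, which removes the purely specular degeneracy that would otherwise preserve thin velocity strata) the flux $h_{-a}^+$ is strictly positive a.e. on $(0,1)$ as soon as $g\ne0$. Transporting this strictly positive incoming flux along characteristics then forces $(\lambda-T_O)^{-1}g$ to be strictly positive a.e. on $\{v>0\}$, and symmetrically on $\{v<0\}$ using $h_a^-$ from (\ref{Flux negatif}); thus $I_A$ cannot be a proper nonzero ideal and $A=\Omega$.

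Concretely the steps, in order, are: (i) translate invariance of the ideal $I_A$ into the statement that $R(\lambda,T_O)$ maps $L^1_+(A)$ into $L^1_+(A)$ for all $\lambda>0$; (ii) for nonzero $g\in L^1_+(A)$, show via irreducibility of $G_0$ and strict positivity of $O_2$ that the series defining $h_{-a}^+$ and $h_a^-$ produces profiles that are a.e.\ strictly positive on $(0,1)$ and $(-1,0)$ respectively—this is where one uses that $G_0$ irreducible means $\bigcup_n \mathrm{supp}(G_0^n\varphi)$ has full measure for any $0\le\varphi\ne0$, upgraded to "a.e.\ positive" using strict positivity of $O_2$ at one stage and the $K_i$-kernel at another; (iii) insert these strictly positive boundary data into (\ref{Resolvante positive})--(\ref{Resolvante negative}) to conclude $R(\lambda,T_O)g>0$ a.e.\ on $\Omega$; (iv) deduce $\Omega\setminus A$ is null, completing the proof.

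The main obstacle I expect is step (ii): passing from the merely topological/measure-theoretic conclusion of irreducibility of $G_0$ (namely that no nontrivial band of $L^1((0,1))$ is $G_0$-invariant) to the a.e.\ pointwise strict positivity of $(1-G_\lambda)^{-1}O_1(\cdots)$ applied to an arbitrary nonnegative nonzero integrand, uniformly enough in $\lambda$ to survive the limit $\lambda\to0$ if needed. This requires carefully combining the strict positivity preservation of $O_2$, the kernel structure of $K_i$ (guaranteed by weak compactness, cf.\ the remark after the boundary-condition setup), and the spreading property of $G_0^n$; one must also handle the velocity weight $|v|^{-1}$ appearing in the characteristic integrals near $v=0$, though for irreducibility (as opposed to the quantitative estimates) it suffices to work at fixed $\lambda>0$ where all exponentials are harmless. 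The symmetric roles of $O_1$ and $O_2$ under the hypothesis "$O_1=K_1$ or $O_2=K_2$" mean the two cases are handled by interchanging the roles of $x=-a$ and $x=a$.
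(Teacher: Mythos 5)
Your overall strategy is on the right track and matches the paper's in its key ingredients (the explicit resolvent and flux formulas, the Neumann series for $(1-G_{\lambda})^{-1}$, irreducibility of $G_{0}$, and strict positivity preservation of $O_{2}$), but the step you flag as ``the main obstacle I expect'' is exactly the heart of the proof, and you leave it unresolved. The paper closes that gap with a specific and clean mechanism that your sketch does not supply. Rather than tracking supports of $G_{0}^{n}\varphi$ and then ``upgrading'' to a.e.\ positivity, it tests against an arbitrary nontrivial $h^{\ast}\in L^{\infty}_{+}$: since $(1-G_{\lambda})^{-1}=\sum_{j\geq 0}G_{\lambda}^{j}$ for $\lambda>0$, one has $\langle (1-G_{\lambda})^{-1}h,h^{\ast}\rangle\geq\langle G_{\lambda}^{j}h,h^{\ast}\rangle$ for every $j$. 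Irreducibility of $G_{0}$ is used in precisely this dual form, namely that for nontrivial nonnegative $h,h^{\ast}$ there exists $j$ with $\langle G_{0}^{j}h,h^{\ast}\rangle>0$. Continuity in $\lambda$ then gives $\langle G_{\lambda}^{j}h,h^{\ast}\rangle>0$ for small $\lambda>0$, and an analyticity argument, using that $\lambda\mapsto\langle G_{\lambda}^{j}h,h^{\ast}\rangle$ is analytic, nonnegative, and nonincreasing on $(0,\infty)$, shows it cannot vanish at any $\lambda_{0}>0$ without vanishing identically, hence is positive for all $\lambda>0$. Since this holds for every nontrivial nonnegative $h^{\ast}$, one gets $(1-G_{\lambda})^{-1}h>0$ a.e.\ directly, and then (\ref{Flux positif}) gives $h_{-a}^{+}>0$ a.e., (\ref{Flux negatif}) together with strict positivity preservation of $O_{2}$ gives $h_{a}^{-}>0$ a.e., and the resolvent formulas (\ref{Resolvante positive})--(\ref{Resolvante negative}) make $(\lambda-T_{O})^{-1}$ positivity improving, which is equivalent to irreducibility of the semigroup.

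Two smaller remarks. First, your framing via invariant ideals $I_{A}$ is a legitimate but superfluous intermediate: the paper goes straight for positivity improvement of the resolvent, which is an equivalent and more direct criterion. Second, several of the ingredients you invoke (using ``the $K_{i}$-kernel at another stage'', the role of the hypothesis ``$O_{1}=K_{1}$ or $O_{2}=K_{2}$'' as ``removing specular degeneracy'', and worrying about uniformity in $\lambda$ near $0$) do not appear in, and are not needed for, the actual argument; indeed you yourself later note correctly that it suffices to work at a fixed $\lambda>0$. So the approach would not fail, but as written it is a sketch with the central positivity-improvement step unproved, and it misses the duality-plus-analyticity device that makes the proof short.
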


\begin{proof}
Note that 
\begin{equation*}
(1-G_{\lambda })^{-1}=\sum_{j=0}^{\infty }G_{\lambda }^{j}\ \ \ (\lambda >0)
\end{equation*}%
so that for any nonnegative $h$ and $h^{\ast }$%
\begin{equation*}
\langle (1-G_{\lambda })^{-1}h,h^{\ast }\rangle \geq \langle G_{\lambda
}^{j}h,h^{\ast }\rangle ,\ \ (j\in 
\mathbb{N}
,\ \lambda >0).
\end{equation*}%
Since $G_{0}$ is irreducible then for any non trivial nonnegative $h$\ and%
\textit{\ }$h^{\ast }$ there exists an integer $j$ (depending a priori on $h$
and $h^{\ast })$ such that $\langle G_{0}^{j}h,h^{\ast }\rangle >0.$ Since%
\begin{equation*}
\lim_{\lambda \rightarrow 0_{+}}\langle G_{\lambda }^{j}h,h^{\ast }\rangle
=\langle G_{0}^{j}h,h^{\ast }\rangle
\end{equation*}%
then $\langle G_{\lambda }^{j}h,h^{\ast }\rangle >0\ $for $\lambda $ small
enough. Since $\lambda \rightarrow \langle G_{\lambda }^{j}h,h^{\ast
}\rangle \in 
\mathbb{R}
_{+}$ is nonincreasing, an analyticity argument shows that%
\begin{equation*}
\langle G_{\lambda }^{j}h,h^{\ast }\rangle >0,\ \ \left( \lambda >0\right)
\end{equation*}%
and finally $\langle (1-G_{\lambda })^{-1}h,h^{\ast }\rangle >0$ so $%
(1-G_{\lambda })^{-1}h>0\ $a.e. Thus (\ref{Flux positif}) gives $%
h_{-a}^{+}>0\ $a.e. for any non trivial nonnegative\textit{\ }$g$ and (\ref%
{Flux negatif}) gives $h_{a}^{-}>0\ $a.e. since $O_{2}$ is strict positivity
preserving. Finally $(1-T_{O})^{-1}$ is positivity improving or equivalently 
$(e^{tT_{O}})_{t\geq 0}$ is irreducible.
\end{proof}

\begin{remark}
Note that $G_{0}$ is an integral operator with kernel $q(v,v^{\prime }).$
The irreducibility of $G_{0}$ amounts to%
\begin{equation*}
\int_{\left[ 0,1\right] \backslash S}\left( \int_{S}q(v,v^{\prime
})dv^{\prime }\right) dv>0
\end{equation*}%
for any measurable $S\subset \left[ 0,1\right] $ such that $S$ and $\left[
0,1\right] \backslash S$ have positive measure. In particular, this the case
if $q(v,v^{\prime })>0\ $a.e. \ Note that $O_{2}\ =\alpha _{2}R_{2}+\beta
_{2}K_{2}$ is automatically strict positivity preserving if $\alpha _{2}>0.$
\end{remark}

\begin{remark}
Another irreducibility criterion is a "dual" version of Theorem \ref{Theorem
irrreducibility}: Assume that either $O_{1}=K_{1}$ or $O_{2}=K_{2}$. If 
\begin{equation*}
\widetilde{G}_{0}=O_{2}O_{1}:L^{1}\left( \left( -1,0\right) ;\ dv\right)
\rightarrow L^{1}\left( \left( -1,0\right) ;\ dv\right)
\end{equation*}%
is irreducible and if $O_{1}$ is strict positivity preserving then $%
(e^{tT_{O}})_{t\geq 0}$ is irreducible. Indeed, it is easy to see that%
\begin{equation*}
h_{a}^{-}=\left( I-\widetilde{G}_{\lambda }\right) ^{-1}O_{2}\left[ e^{-%
\frac{\lambda }{v}2a}O_{1}\left( \int_{-a}^{a}e^{-\frac{\lambda }{\left\vert
v\right\vert }(y+a)}g(y,v)\,dy\right) +\left( \int_{-a}^{a}e^{-\frac{\lambda 
}{v}(a-y)}g(y,v)\,dy\right) \right]
\end{equation*}%
\begin{equation*}
\widetilde{G}_{\lambda }:=O_{2}e^{-\frac{\lambda }{v}2a}O_{1}e^{-\frac{%
\lambda }{\left\vert v\right\vert }2a}
\end{equation*}%
\begin{equation*}
h_{-a}^{+}=O_{1}h_{-a}^{-}=O_{1}\left( e^{-\frac{\lambda }{\left\vert
v\right\vert }2a}h_{a}^{-}+\int_{-a}^{a}e^{-\frac{\lambda }{\left\vert
v\right\vert }(y+a)}g(y,v)\,dy\right)
\end{equation*}%
and then it suffices to exchange the roles of $h_{-a}^{+}$ and $h_{a}^{-}$
and to argue as previously.
\end{remark}

\section{\label{Section the range of generator}On the range of $T_{O}$}

According to Corollary \ref{Corollary Ingham Bis}, the knowledge of the
range of the generator is a key point. To this end, we describe now a useful
subspace of the range of $T_{O}$.

\begin{theorem}
\label{Theorem characterization range of generator}We assume that either $%
O_{1}=K_{1}$ and both$\ \left\vert v\right\vert ^{-1}K_{1}$ and $\left\vert
v\right\vert ^{-1}K_{2}\left\vert v\right\vert $ are bounded or $O_{2}=K_{2}$
and both$\ \left\vert v\right\vert ^{-1}K_{2}$ and $\left\vert v\right\vert
^{-1}K_{1}\left\vert v\right\vert $ are bounded. We assume additionally, in
the first case, that $G_{0}=O_{1}O_{2}$ is irreducible or, in the second
case, that $\widetilde{G}_{0}=O_{2}O_{1}$ is irreducible\textbf{.} Let $g\in
L^{1}(\Omega ).$ If 
\begin{equation}
\frac{1}{\left\vert v\right\vert }g\in L^{1}(\Omega )  \label{Condition 1}
\end{equation}%
and if%
\begin{equation}
\int_{\Omega }g=0  \label{Condition 2}
\end{equation}%
then $g\in Ran(T_{O}).$
\end{theorem}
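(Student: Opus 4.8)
The goal is to solve $T_O f = g$ for some $f \in D(T_O)$, i.e.\ to find $f \in W_1(\Omega)$ satisfying $-v\,\partial f/\partial x = g$ together with the boundary relations $h^+_{-a} = O_1 h^-_{-a}$ and $h^-_a = O_2 h^+_a$. The natural candidate is $f = \lim_{\lambda \to 0_+} (\lambda - T_O)^{-1} g$, with $(\lambda - T_O)^{-1} g$ given explicitly by \eqref{Resolvante positive}--\eqref{Resolvante negative} together with the flux formulas \eqref{Flux positif}--\eqref{Flux negatif}. So the plan is: first, pass to the limit $\lambda \to 0_+$ in the flux formula \eqref{Flux positif} for $h^+_{-a}$; second, feed the resulting boundary datum into \eqref{Resolvante positive}--\eqref{Resolvante negative} with $\lambda = 0$ to define $f(x,v)$; third, verify that the resulting $f$ lies in $W_1(\Omega)$, satisfies the boundary conditions, and has zero divergence so that $T_O f = g$.

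The subtle point is the behaviour of $(1-G_\lambda)^{-1}$ as $\lambda \to 0_+$. Since $1$ is an eigenvalue of $G_0$, the operator $(1-G_\lambda)^{-1}$ is \emph{not} bounded as $\lambda\to 0_+$; however, the irreducibility hypothesis on $G_0$ (resp.\ $\widetilde G_0$) together with the isolation of the eigenvalue $1$ means that $1$ is a simple pole of $\lambda \mapsto (1-G_\lambda)^{-1}$, and the singular part is captured by the rank-one spectral projection onto $\mathrm{span}\{h_0\}$. The key is that the right-hand sides appearing in \eqref{Flux positif} are annihilated, in the limit, by the relevant spectral functional: condition \eqref{Condition 2} ($\int_\Omega g = 0$) should be exactly what forces the dangerous component of $O_1(\int e^{\cdot}g)$ to vanish as $\lambda\to 0_+$, so that $(1-G_\lambda)^{-1}$ applied to it stays bounded. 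I would make this precise by writing $(1-G_\lambda)^{-1} = \frac{1}{1-\mu(\lambda)}\,\Pi_\lambda + (\text{bounded})$ where $\Pi_\lambda$ is the spectral projection and $\mu(\lambda)$ the perturbed eigenvalue with $\mu(0)=1$, and checking that $\Pi_0$ applied to the $\lambda\to 0$ limit of the bracketed terms in \eqref{Flux positif}, paired against the left eigenfunctional, gives a quantity proportional to $\int_\Omega g$, hence $0$. This uses the weighted boundedness hypotheses ($\lvert v\rvert^{-1}K_1$, $\lvert v\rvert^{-1}K_2\lvert v\rvert$ bounded, etc.) to guarantee that all the integrals $\int_{-a}^a e^{-\frac{\lambda}{|v|}(\cdots)}g(y,v)\,dy$ and their images under $O_1, O_2$ converge in the appropriate $L^1$ spaces as $\lambda\to 0_+$ — here \eqref{Condition 1} ($g/\lvert v\rvert \in L^1$) provides the domination needed for dominated convergence near $v=0$.

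Once $h^+_{-a}$ is obtained as a well-defined element of $L^1((0,1);dv)$ (and similarly $h^-_a$ via \eqref{Flux negatif}), I would define $f$ by \eqref{Resolvante positive}--\eqref{Resolvante negative} with $\lambda=0$, i.e.\ $f(x,v) = \frac1v h^+_{-a}(v) + \int_{-a}^x \frac1v g(y,v)\,dy$ for $v>0$ and symmetrically for $v<0$. The integrability of $1/v \cdot h^+_{-a}$ near $v=0$ should again follow from the weighted boundedness assumptions (this is essentially the same computation as in Theorem~\ref{Theorem Invariant density}, since $h^+_{-a}$ is built from $O_1$ applied to things controlled by $\lvert v\rvert^{-1}$), and $\int_{-a}^x \frac1v g(y,v)\,dy \in L^1(\Omega)$ by \eqref{Condition 1}. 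By construction $\partial f/\partial x = -g/v$ in the distributional sense, so $v\,\partial f/\partial x = -g \in L^1(\Omega)$, giving $f \in W_1(\Omega)$ and $T_O f = g$; and the boundary relations hold because the flux formulas \eqref{Flux positif}--\eqref{Flux negatif} at $\lambda=0$ were derived precisely from $h^+_{-a} = O_1 h^-_{-a}$, $h^-_a = O_2 h^+_a$.

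\textbf{Main obstacle.} The hard part is the limit $\lambda\to 0_+$ of $(1-G_\lambda)^{-1}$ acting on the source terms: one must show the singular (spectral-projection) part does not blow up, which requires identifying the left eigenfunctional of $G_0$ at eigenvalue $1$ and showing the source terms pair to a multiple of $\int_\Omega g$. A clean way to avoid explicit eigenfunctional bookkeeping is instead to work on the quotient/complementary subspace: decompose $L^1((0,1);dv) = \mathrm{span}\{h_0\} \oplus Y$ with $Y = \overline{\mathrm{Ran}(1-G_0)}$ (using irreducibility and the spectral gap so $G_0$ restricted to $Y$ has spectral radius $<1$), observe that the source terms of \eqref{Flux positif}, in the limit, lie in $Y$ precisely when $\int_\Omega g = 0$, and then $(1-G_0)^{-1}$ is bounded on $Y$, so $h^+_{-a} = (1-G_0|_Y)^{-1}(\text{source})$ is well-defined there. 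Verifying that ``$\int_\Omega g=0 \Rightarrow$ limiting source $\in Y$'' is the crux and is where both \eqref{Condition 1} and \eqref{Condition 2} get used together.
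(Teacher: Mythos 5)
Your overall plan is sound and your ``alternative route'' at the end of the Main obstacle paragraph is essentially the paper's proof: the paper never takes a limit $\lambda\to0_+$ at all, but works directly at $\lambda=0$ using the closed direct sum $L^1((0,1);dv)=Ker(1-G_0)\oplus Ran(1-G_0)$ (valid because $1$ is an isolated, algebraically simple eigenvalue), defines $h^+_{-a}$ by applying the bounded inverse of $1-G_0$ on $Ran(1-G_0)$, and then checks the weighted integrability of the fluxes via the hypotheses on $|v|^{-1}O_1$, $|v|^{-1}K_2|v|$, etc., exactly along the lines you sketch.

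There is, however, a genuine gap left in your proposal, and it is precisely the step you flag as ``the crux'': you do not actually produce the argument that $\int_\Omega g=0$ forces the source term $G_0(\widehat g_+)+O_1(\widehat g_-)$ into $Ran(1-G_0)$. This is not merely ``eigenfunctional bookkeeping'' that one can wave away; it is the heart of the proof. The paper's resolution is short but uses a structural fact you did not invoke: $G_0$ and $O_1$ are \emph{stochastic} (integral-preserving), so $G_0^*\mathbf 1=\mathbf 1$, hence the Fredholm dual eigenfunction is $h_0^*=\mathbf 1$, and the orthogonality condition
\begin{equation*}
\langle G_0(\widehat g_+)+O_1(\widehat g_-),\ h_0^*\rangle=0
\end{equation*}
collapses — again using $\int_0^1 O_1(\widehat g_-)=\int_{-1}^0\widehat g_-$ by integral preservation of $O_1$ — to $\int_0^1\widehat g_++\int_{-1}^0\widehat g_-=\int_\Omega g=0$. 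Without identifying the dual eigenfunction as the constant $\mathbf 1$, you cannot complete your argument; and note also that \eqref{Condition 1} plays no role in this orthogonality step (contrary to your suggestion that both conditions are used ``together'' here) — it is used only afterward, to guarantee $\widehat g_\pm\in L^1(dv/|v|)$ and hence the required weighted integrability of $h^+_{-a}$ and $h^-_a$.
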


\begin{proof}
Note that $(e^{tT_{O}})_{t\geq 0}$ is a stochastic semigroup so that%
\begin{equation*}
\int_{\Omega }T_{O}\varphi =0,\ \ \ \varphi \in D(T_{O})
\end{equation*}%
and consequently (\ref{Condition 2}) is a necessary condition for $g\in
L^{1}(\Omega )$ to belong to $Ran(T_{O})$.\ We consider the case $\beta
_{1}=1.$ Let%
\begin{equation*}
g_{-}:(y,v)\in \left( -a,a\right) \times \left( -1,0\right) \rightarrow
g(y,v),
\end{equation*}%
\begin{equation*}
g_{+}:(y,v)\in \left( -a,a\right) \times \left( 0,+1\right) \rightarrow
g(y,v)
\end{equation*}%
and%
\begin{equation*}
\widehat{g}_{-}(v):=\int_{-a}^{a}g_{-}(y,v)\,dy,\ \ \widehat{g}%
_{+}(v):=\int_{-a}^{a}g_{+}(y,v)\,dy.
\end{equation*}%
Note that by assumption%
\begin{equation*}
\widehat{g}_{-}\in L^{1}\left( \left( -1,0\right) ;\ \frac{dv}{\left\vert
v\right\vert }\right) \text{ and \ }\widehat{g}_{+}\in L^{1}\left( \left(
0,+1\right) ;\ \frac{dv}{\left\vert v\right\vert }\right) .
\end{equation*}%
By inspection of (\ref{Resolvante positive})(\ref{Resolvante negative})(\ref%
{Flux positif})(\ref{Flux negatif}), for solving $(\lambda -T_{O})f=g$ with $%
\lambda =0,\ $it suffices that (\ref{Condition 1}) is satisfied, 
\begin{equation}
O_{1}O_{2}\left( \widehat{g}_{+}\right) +O_{1}\left( \widehat{g}_{-}\right)
\in Ran(1-G_{0}),  \label{Range condition}
\end{equation}%
that $h_{-a}^{+}$, \textit{given by} (\ref{Flux positif}), is such that%
\begin{equation}
\frac{1}{\left\vert v\right\vert }h_{-a}^{+}\in L^{1}\left( \left(
0,+1\right) ;\ dv\right)  \label{Positive flux in the weight space}
\end{equation}%
and that%
\begin{equation}
\frac{1}{\left\vert v\right\vert }h_{a}^{-}:=\frac{1}{\left\vert
v\right\vert }O_{2}\left( h_{-a}^{+}+\widehat{g}_{+}\right) \in L^{1}\left(
\left( -1,0\right) ;\ dv\right) .  \label{Negative flux in the weight space}
\end{equation}%
Note that $1$ is an isolated algebraically simple eigenvalue of $%
G_{0}=O_{1}O_{2}$ associated with the eigenfunction $h_{0}$ (see (\ref%
{Eigenfunction of Gzero})) so that 
\begin{equation*}
Ran(1-G_{0})\text{ is \textit{closed} in }L^{1}\left( \left( 0,+1\right) ;\
dv\right) .
\end{equation*}%
By the Fredholm alternative%
\begin{equation*}
G_{0}\left( \widehat{g}_{+}\right) +O_{1}\left( \widehat{g}_{-}\right) \in
Ran(1-G_{0})
\end{equation*}%
if and only if $G_{0}\left( \widehat{g}_{+}\right) +O_{1}\left( \widehat{g}%
_{-}\right) $ is orthogonal (for the duality pairing) to the dual
eigenfunction $h_{0}^{\ast }\in L^{\infty }\left( \left( 0,+1\right) ;\
dv\right) $ i.e.%
\begin{equation*}
\langle G_{0}\left( \widehat{g}_{+}\right) +O_{1}\left( \widehat{g}%
_{-}\right) ,h_{0}^{\ast }\rangle =0
\end{equation*}%
or%
\begin{equation*}
\langle G_{0}\left( \widehat{g}_{+}\right) ,h_{0}^{\ast }\rangle +\langle
O_{1}\left( \widehat{g}_{-}\right) ,h_{0}^{\ast }\rangle =0.
\end{equation*}%
Since%
\begin{equation*}
\langle G_{0}\left( \widehat{g}_{+}\right) ,h_{0}^{\ast }\rangle =\langle \ 
\widehat{g}_{+},G_{0}^{\ast }h_{0}^{\ast }\rangle =\langle \ \widehat{g}%
_{+},h_{0}^{\ast }\rangle
\end{equation*}%
we have%
\begin{equation*}
\langle \widehat{g}_{+}+O_{1}\left( \widehat{g}_{-}\right) ,h_{0}^{\ast
}\rangle =0.
\end{equation*}%
On the other hand, $G_{0}:L^{1}\left( \left( 0,+1\right) ;\ dv\right)
\rightarrow L^{1}\left( \left( 0,+1\right) ;\ dv\right) $ is integral
preserving, i.e. 
\begin{equation*}
\int_{0}^{1}G_{0}\varphi =\int_{0}^{1}\varphi \ \ \forall \varphi \in
L^{1}\left( \left( 0,+1\right) ;\ dv\right)
\end{equation*}%
so $G_{0}^{\ast }1=1$ and $h_{0}^{\ast }=1.\ $Thus 
\begin{equation*}
\int_{0}^{1}\widehat{g}_{+}+\int_{0}^{1}O_{1}\left( \widehat{g}_{-}\right)
=0.
\end{equation*}%
Since $O_{1}$ is also integral preserving we have%
\begin{equation*}
\int_{0}^{1}O_{1}\left( \widehat{g}_{-}\right) =\int_{-1}^{0}\widehat{g}_{-}
\end{equation*}%
and finally (\ref{Range condition}) amounts to%
\begin{equation*}
\int_{0}^{1}\widehat{g}_{+}+\int_{-1}^{0}\widehat{g}_{-}=0
\end{equation*}%
which is nothing but (\ref{Condition 2}). Hence (\ref{Range condition}) is
satisfied. Note that 
\begin{equation*}
L^{1}\left( \left( 0,+1\right) ;\ dv\right) =Ker(I-G_{0})\oplus Ran(1-G_{0})
\end{equation*}%
and $Ran(1-G_{0})$ is invariant under $G_{0}$. It follows that \textit{on} $%
Ran(1-G_{0})$ 
\begin{equation*}
\left( 1-G_{0}\right) ^{-1}=I+G_{0}\left( 1-G_{0}\right) ^{-1}
\end{equation*}%
so%
\begin{eqnarray*}
h_{-a}^{+} &=&(1-G_{0})^{-1}\left[ O_{1}O_{2}\left( \widehat{g}_{+}\right)
+O_{1}\left( \widehat{g}_{-}\right) \right] \\
&=&O_{1}O_{2}\left( \widehat{g}_{+}\right) +O_{1}\left( \widehat{g}%
_{-}\right) +G_{0}\left( 1-G_{0}\right) ^{-1}\left[ O_{1}O_{2}\left( 
\widehat{g}_{+}\right) +O_{1}\left( \widehat{g}_{-}\right) \right]
\end{eqnarray*}%
shows that $h_{-a}^{+}\in L^{1}\left( \left( 0,+1\right) ;\frac{\ dv}{%
\left\vert v\right\vert }\right) $ since $\frac{1}{\left\vert v\right\vert }%
O_{1}$ is bounded. Note that%
\begin{eqnarray*}
\frac{1}{\left\vert v\right\vert }h_{a}^{-} &=&\frac{1}{\left\vert
v\right\vert }O_{2}\left( h_{-a}^{+}+\widehat{g}_{+}\right) \\
&=&\frac{1}{\left\vert v\right\vert }R_{2}\left( h_{-a}^{+}+\widehat{g}%
_{+}\right) +\frac{1}{\left\vert v\right\vert }K_{2}\left( h_{-a}^{+}+%
\widehat{g}_{+}\right) \\
&=&\frac{1}{\left\vert v\right\vert }R_{2}\left\vert v\right\vert \left( 
\frac{h_{-a}^{+}}{\left\vert v\right\vert }+\frac{\widehat{g}_{+}}{%
\left\vert v\right\vert }\right) +\frac{1}{\left\vert v\right\vert }%
K_{2}\left\vert v\right\vert \left( \frac{h_{-a}^{+}}{\left\vert
v\right\vert }+\frac{\widehat{g}_{+}}{\left\vert v\right\vert }\right) .
\end{eqnarray*}%
Since 
\begin{equation*}
h_{-a}^{+}+\widehat{g}_{+}\in L^{1}\left( \left( 0,+1\right) ;\frac{\ dv}{%
\left\vert v\right\vert }\right)
\end{equation*}%
we have, using our assumption, 
\begin{equation*}
\frac{1}{\left\vert v\right\vert }K_{2}\left\vert v\right\vert \left( \frac{%
h_{-a}^{+}}{\left\vert v\right\vert }+\frac{\widehat{g}_{+}}{\left\vert
v\right\vert }\right) \in L^{1}\left( \left( -1,0\right) ;\frac{\ dv}{%
\left\vert v\right\vert }\right) .
\end{equation*}%
We always have%
\begin{equation*}
\frac{1}{\left\vert v\right\vert }R_{2}\left\vert v\right\vert \left( \frac{%
h_{-a}^{+}}{\left\vert v\right\vert }+\frac{\widehat{g}_{+}}{\left\vert
v\right\vert }\right) \in L^{1}\left( \left( -1,0\right) ;\frac{\ dv}{%
\left\vert v\right\vert }\right) .
\end{equation*}%
This shows (\ref{Negative flux in the weight space}). The case $\beta _{2}=1$
can be treated similarly.
\end{proof}

\begin{remark}
We do not know whether (\ref{Condition 1}) is a necessary condition for $g$
to belong to $Ran(T_{O}).$
\end{remark}

\section{\label{Section Boundary spectrum}On the boundary spectrum of $T_{O}$%
}

This section is devoted to the analysis of $\sigma (T_{O})\cap i%
\mathbb{R}
.\ $Note first that the type of $(e^{tT_{O}})_{t\geq 0}$ is equal to $0$
since $(e^{tT_{O}})_{t\geq 0}$ is a stochastic semigroup. Thus $0\in \sigma
(T_{O})$ since the type of $(e^{tT_{O}})_{t\geq 0}$ coincides with the
spectral bound of its generator, see e.g. \cite{Nagel}. \ 

\begin{theorem}
\label{Theorem boundary spectrum}Suppose that $O_{1}=K_{1}$ and that $%
\left\vert v\right\vert ^{-1}K_{1}$ is bounded (or $O_{2}=K_{2}$ and $%
\left\vert v\right\vert ^{-1}K_{2}$ is bounded). Then $i%
\mathbb{R}
\subset \sigma (T_{O}).$
\end{theorem}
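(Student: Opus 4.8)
The plan is to establish the (formally stronger) fact that every $is$, $s\in\mathbb{R}$, lies in the approximate point spectrum of $T_O$: for each such $s$ I will exhibit a sequence $(f_n)_{n\ge1}$ in $D(T_O)$ with $\|f_n\|=1$ and $(is-T_O)f_n\to0$ in $L^1(\Omega)$, which forces $is\in\sigma(T_O)$. The underlying heuristic is that particles whose velocity $v$ is near $0$ (the tangential velocities) need a time of order $2a/|v|\to+\infty$ to cross the slab, so they sustain the almost-free modes $e^{-\frac{is}{v}(x+a)}$ at any prescribed frequency $s$; localising such modes in $x$ away from the walls $x=\pm a$ costs almost nothing and makes the boundary conditions trivially satisfied.

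Concretely, I would fix $\eta\in C_c^\infty((-a,a))$ with $\eta\ge0$ and $\int_{-a}^a\eta=1$, choose $\varepsilon_n\downarrow0$, and pick $\psi_n\ge0$ supported in $(0,\varepsilon_n)$ with $\int_0^1\psi_n\,dv=1$. Set
\[
f_n(x,v)=\psi_n(v)\,\eta(x)\,e^{-\frac{is}{v}(x+a)}\qquad((x,v)\in\Omega),
\]
with $f_n(x,v)=0$ for $v\le0$. Then $\|f_n\|=\bigl(\int_0^1\psi_n\bigr)\bigl(\int_{-a}^a\eta\bigr)=1$. A direct computation gives $v\,\partial_x f_n=v\,\psi_n(v)\,\eta'(x)\,e^{-\frac{is}{v}(x+a)}-is\,f_n$, so the factor $1/v$ produced by differentiating the exponential is absorbed by the $v$ in $v\,\partial_x$; since $\psi_n\in L^1$, this shows $v\,\partial_x f_n\in L^1(\Omega)$ and hence $f_n\in W_1(\Omega)$. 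Moreover
\[
(is-T_O)f_n=is\,f_n+v\,\partial_x f_n=v\,\psi_n(v)\,\eta'(x)\,e^{-\frac{is}{v}(x+a)},
\]
whence $\|(is-T_O)f_n\|=\|\eta'\|_{L^1}\int_0^{\varepsilon_n}v\,\psi_n(v)\,dv\le\|\eta'\|_{L^1}\,\varepsilon_n\longrightarrow0$.

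Finally, to see that $f_n\in D(T_O)$, note that $\eta(\pm a)=0$, so the traces of $f_n$ at $x=-a$ and $x=a$ vanish identically: $h^{\pm}_{-a}=0$ and $h^{\pm}_{a}=0$; the boundary conditions $h^{+}_{-a}=O_1h^{-}_{-a}$ and $h^{-}_{a}=O_2h^{+}_{a}$ then hold by linearity of $O_1$ and $O_2$. If $is$ belonged to $\rho(T_O)$ we would obtain $1=\|f_n\|\le\|(is-T_O)^{-1}\|\,\|(is-T_O)f_n\|\to0$, a contradiction; hence $is\in\sigma(T_O)$, and since $s\in\mathbb{R}$ is arbitrary, $i\mathbb{R}\subseteq\sigma(T_O)$. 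The only point calling for care is checking that $f_n$ genuinely lies in $W_1(\Omega)$, and so in $D(T_O)$, in spite of the increasingly rapid oscillation of $e^{-\frac{is}{v}(x+a)}$ as $v\to0^+$; this is exactly where the weight $v$ in the definition of $W_1(\Omega)$, together with the concentration of $\psi_n$ at $v=0$, is used. (Note that the present argument uses only the boundedness of $O_1$ and $O_2$; the hypothesis that $O_1$, or $O_2$, is a diffuse kernel operator with the stated boundedness near $v=0$ is not needed for the inclusion $i\mathbb{R}\subseteq\sigma(T_O)$ itself, though it is natural for the sharper spectral information used elsewhere.)
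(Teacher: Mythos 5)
Your proposal is correct, and it takes a genuinely different and more elementary route than the paper's. You prove directly that every $is$ lies in the \emph{approximate point} spectrum by exhibiting a Weyl sequence concentrated near the tangential velocities $v=0$: the functions $f_n(x,v)=\psi_n(v)\eta(x)e^{-\frac{is}{v}(x+a)}$ are genuine elements of $W_1(\Omega)$ because $v\partial_x$ annihilates the $1/v$ produced by differentiating the exponential, they satisfy the boundary conditions trivially since $\eta$ vanishes near $\pm a$, and $(is-T_O)f_n = v\,\psi_n(v)\,\eta'(x)\,e^{-is(x+a)/v}\to 0$ in $L^1$ because $\psi_n$ concentrates at $v=0$. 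All the steps check out, including the slice-wise verification that $v\partial_x f_n$ is the distributional derivative.

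The paper argues quite differently: it splits $(\lambda-T_O)^{-1}g$ into the zero-incoming-BC resolvent $H_\lambda g=(\lambda-T_0)^{-1}g$ plus a boundary term, invokes Lehner--Wing's result $\sigma(T_0)=\{\mathop{\mathrm{Re}}\lambda\le 0\}$ to see that $\|(\varepsilon+is-T_0^{\pm})^{-1}\|\to\infty$, and then uses the hypothesis ``$O_1=K_1$ and $|v|^{-1}K_1$ bounded'' (together with Corollary~\ref{Corollary prolomgement to imaginary axis} on $(1-G_\lambda)^{-1}$) to show the boundary contribution stays uniformly bounded, forcing the full resolvent to blow up. Your approach bypasses all of this: no resolvent formula, no $(1-G_\lambda)^{-1}$, no Lehner--Wing. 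As you correctly observe, your argument uses none of the hypotheses of the theorem and yields $i\mathbb{R}\subset\sigma(T_O)$ for \emph{arbitrary} bounded boundary operators $O_1,O_2$; in particular it settles affirmatively the question raised in the remark immediately following this theorem (``it is not clear whether $i\mathbb{R}\subset\sigma(T_O)$ for more general partly diffuse models''). One minor imprecision in your last paragraph: the membership $f_n\in W_1(\Omega)$ for fixed $n$ does not actually require the concentration of $\psi_n$ at $v=0$ (only $\psi_n\in L^1$ is needed, since $v\partial_x f_n = v\psi_n\eta' e^{-is(x+a)/v}-isf_n\in L^1$ as long as $\psi_n\in L^1$); concentration at $v=0$ is what drives $\|(is-T_O)f_n\|\to 0$.
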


\begin{proof}
A simple inspection of $(\lambda -T_{O})^{-1}g$ shows that it consists of
two parts, the \textit{first} one being 
\begin{equation*}
H_{\lambda }g:=\chi _{\left\{ v>0\right\} }\int_{-a}^{x}e^{-\frac{\lambda }{v%
}(x-y)}\frac{1}{v}g(y,v)\,dy+\chi _{\left\{ v<0\right\} }\int_{x}^{a}e^{-%
\frac{\lambda }{\left\vert v\right\vert }(y-x)}\frac{1}{\left\vert
v\right\vert }g(y,v)\,dy\ 
\end{equation*}%
which is nothing but $(\lambda -T_{0})^{-1}g$ \ where $T_{0}$ is the
classical free transport operator with the \textit{"zero incoming}" boundary
condition. It is well known (see \cite{Lehner-Wing}) that 
\begin{equation}
\sigma (T_{0})=\left\{ \lambda \in 
\mathbb{C}
;\ \func{Re}\lambda \leq 0\right\} ;  \label{Spectre de Lehner-Wing}
\end{equation}%
(the proof is given there in $L^{2}(\Omega )\ $but is the same in all $L^{p}$
spaces$\ (p\geq 1)$). Let us regard this result in a slightly different way.
Indeed, let%
\begin{equation*}
\Omega _{+}=\left( -a,a\right) \times \left( 0,1\right) \ \text{and }\Omega
_{-}=\left( -a,a\right) \times \left( -1,0\right) .
\end{equation*}%
We note that $L^{1}(\Omega _{+})$ and $L^{1}(\Omega _{-})$ are invariant
under $(\lambda -T_{0})^{-1}$ (or equivalently under $(e^{tT_{0}})_{t\geq 0}$%
) and therefore $T_{0}$ splits as $T_{0}=T_{0}^{-}\oplus T_{0}^{+}$ where $%
T_{0}^{\pm }$ are the generators of the restrictions of $(e^{tT_{0}})_{t\geq
0}$ to the subpaces $L^{1}(\Omega _{\pm }).$ Thus 
\begin{equation*}
(\lambda -T_{0}^{+})^{-1}g_{+}=\int_{-a}^{x}e^{-\frac{\lambda }{v}(x-y)}%
\frac{1}{v}g_{+}(y,v)\,dy
\end{equation*}%
and 
\begin{equation*}
(\lambda -T_{0}^{-})^{-1}g_{-}=\int_{x}^{a}e^{-\frac{\lambda }{\left\vert
v\right\vert }(y-x)}\frac{1}{\left\vert v\right\vert }g_{-}(y,v)\,dy
\end{equation*}%
where $g_{\pm }$ are the restrictions of $g$ to $L^{1}(\Omega _{\pm }).\ $As
in\ \cite{Lehner-Wing}, we can show that%
\begin{equation*}
\sigma (T_{0}^{-})=\sigma (T_{0}^{+})=\left\{ \lambda \in 
\mathbb{C}
;\ \func{Re}\lambda \leq 0\right\} .
\end{equation*}%
In particular%
\begin{equation}
\lim_{\varepsilon \rightarrow 0_{+}}\left\Vert (\varepsilon
+is-T_{0}^{+})^{-1}\right\Vert =+\infty \ \ (s\in 
\mathbb{R}
)  \label{Explosion partie positive}
\end{equation}%
and%
\begin{equation}
\lim_{\varepsilon \rightarrow 0_{+}}\left\Vert (\varepsilon
+is-T_{0}^{-})^{-1}\right\Vert =+\infty \ \ (s\in 
\mathbb{R}
).  \label{Explosion partie negative}
\end{equation}%
(i) Suppose first that $\beta _{1}=1$ and that $\left\vert v\right\vert
^{-1}K_{1}$ is bounded. We know that $(\lambda -T_{O})^{-1}g\ $ is given for 
\textit{positive} $v$ by%
\begin{equation*}
\frac{1}{v}e^{-\frac{\lambda }{v}(x+a)}h_{-a}^{+}+(\lambda
-T_{0}^{+})^{-1}g_{+}\ \ 
\end{equation*}%
where 
\begin{equation*}
h_{-a}^{+}=(1-G_{\lambda })^{-1}O_{1}\left[ e^{-\frac{2\lambda a}{\left\vert
v\right\vert }}O_{2}\ \left( \int_{-a}^{a}e^{-\frac{\lambda }{v}%
(a-y)}g(y,v)\,dy\right) +\left( \int_{-a}^{a}e^{-\frac{\lambda }{\left\vert
v\right\vert }(y+a)}g(y,v)\,dy\right) \right] .
\end{equation*}%
Note that%
\begin{equation*}
(1-G_{\lambda })^{-1}O_{1}=O_{1}+G_{\lambda }(1-G_{\lambda })^{-1}O_{1}
\end{equation*}%
and $G_{\lambda }=O_{1}e^{-\frac{2\lambda a}{\left\vert v\right\vert }%
}O_{2}e^{-\frac{2\lambda a}{\left\vert v\right\vert }}.$ According to
Corollary \ref{Corollary prolomgement to imaginary axis} $(1-G_{\lambda
})^{-1}$ $\left( \func{Re}\lambda >0\right) $ extends continuously to $i%
\mathbb{R}
\backslash \left\{ 0\right\} $ (in the strong operator topology). It follows
that the norm of the operator (depending on $\lambda =\varepsilon +is;\
\varepsilon >0$) 
\begin{equation*}
L^{1}(\Omega )\ni g\rightarrow \frac{1}{v}e^{-\frac{\lambda }{v}%
(x+a)}h_{-a}^{+}\in L^{1}(\Omega _{+})
\end{equation*}%
remains \textit{uniformly bounded} when $\varepsilon \rightarrow 0_{+}\ \
(\forall s\neq 0).$ Finally (\ref{Explosion partie positive}) \ implies that 
\begin{equation*}
\lim_{\varepsilon \rightarrow 0_{+}}\sup_{\left\Vert g\right\Vert \leq
1}\left\Vert (\varepsilon +is-T_{O})^{-1}g\right\Vert _{L^{1}(\Omega
_{+})}=+\infty \ \ \ (s\neq 0)
\end{equation*}%
whence $is\in \sigma (T_{O})\ \ (\forall s\neq 0).$

(ii) Suppose now that $\beta _{2}=1$ and that $\left\vert v\right\vert
^{-1}K_{2}$ is bounded. It is easy to see that\ $(\lambda -T_{O})^{-1}g$ can
also be given by%
\begin{equation*}
f(x,v)=\frac{1}{v}e^{-\frac{\lambda }{v}(x+a)}h_{-a}^{+}+\int_{-a}^{x}e^{-%
\frac{\lambda }{v}(x-y)}\frac{1}{v}g(y,v)\,dy\ \ (v>0)
\end{equation*}%
\begin{equation*}
f(x,v)=\frac{1}{\left\vert v\right\vert }e^{-\frac{\lambda }{\left\vert
v\right\vert }(a-x)}h_{a}^{-}+\int_{x}^{a}e^{-\frac{\lambda }{\left\vert
v\right\vert }(y-x)}\frac{1}{\left\vert v\right\vert }g(y,v)\,dy\ \ (v<0)
\end{equation*}%
where%
\begin{equation*}
h_{a}^{-}=\left( I-\widetilde{G}_{\lambda }\right) ^{-1}O_{2}\left[ e^{-%
\frac{\lambda }{v}2a}O_{1}\left( \int_{-a}^{a}e^{-\frac{\lambda }{\left\vert
v\right\vert }(y+a)}g(y,v)\,dy\right) +\left( \int_{-a}^{a}e^{-\frac{\lambda 
}{v}(a-y)}g(y,v)\,dy\right) \right]
\end{equation*}%
\begin{equation*}
\widetilde{G}_{\lambda }:=O_{2}e^{-\frac{\lambda }{v}2a}O_{1}e^{-\frac{%
\lambda }{\left\vert v\right\vert }2a}
\end{equation*}%
and%
\begin{equation*}
h_{-a}^{+}=O_{1}h_{-a}^{-}=O_{1}\left( e^{-\frac{\lambda }{\left\vert
v\right\vert }2a}h_{a}^{-}+\int_{-a}^{a}e^{-\frac{\lambda }{\left\vert
v\right\vert }(y+a)}g(y,v)\,dy\right) .
\end{equation*}%
In particular, $(\lambda -T_{O})^{-1}g$ is given for \textit{negative} $v$ by%
\begin{equation*}
\frac{1}{\left\vert v\right\vert }e^{-\frac{\lambda }{\left\vert
v\right\vert }(a-x)}h_{a}^{-}+(\lambda -T_{0}^{-})^{-1}g_{-}.
\end{equation*}%
By noting that 
\begin{equation*}
\left( I-\widetilde{G}_{\lambda }\right) ^{-1}O_{2}=O_{2}+\widetilde{G}%
_{\lambda }(1-\widetilde{G}_{\lambda })^{-1}O_{2},
\end{equation*}%
and using the fact that $(1-\widetilde{G}_{\lambda })^{-1}$ $\left( \func{Re}%
\lambda >0\right) $ extends continuously to $i%
\mathbb{R}
\backslash \left\{ 0\right\} $ in the strong operator topology (see Remark %
\ref{Remark prolongement imaginary axis}), we see as before that (\ref%
{Explosion partie negative}) implies 
\begin{equation*}
\lim_{\varepsilon \rightarrow 0_{+}}\sup_{\left\Vert g\right\Vert \leq
1}\left\Vert (\varepsilon +is-T_{O})^{-1}g\right\Vert _{L^{1}(\Omega
_{-})}=+\infty \ \ \ (s\neq 0)
\end{equation*}%
and $is\in \sigma (T_{O})\ \ (\forall s\neq 0).$
\end{proof}

\begin{remark}
A priori, it is not clear whether $i%
\mathbb{R}
\subset \sigma (T_{O})$ for more general partly diffuse models.
\end{remark}

\section{\label{Section Object to estimate}The objects to be estimated}

Note that $H_{\lambda }g=(\lambda -T_{0})^{-1}g$ does not extend to $i%
\mathbb{R}
$ for \textit{all} $g$ because of (\ref{Spectre de Lehner-Wing}). On the
other hand, we can extend it on a suitable subspace. Indeed, let $k\in 
\mathbb{N}
,\ (k\neq 0).\ $It is easy to see that $H_{\lambda }g$ extends to the whole
closed half space $\left\{ \lambda \in 
\mathbb{C}
;\ \func{Re}\lambda \geq 0\right\} $ with the $C^{k}$ norm estimates 
\begin{equation}
\left\Vert \frac{\partial ^{j}}{\partial \lambda ^{j}}H_{\lambda
}g\right\Vert \leq \left( 2a\right) ^{j}\left\Vert \frac{g}{\left\vert
v\right\vert ^{j+1}}\right\Vert \ (0\leq j\leq k,\ \func{Re}\lambda \geq 0)
\label{Derivatives H lambda}
\end{equation}%
provided that $\left\Vert \frac{g}{\left\vert v\right\vert ^{k+1}}%
\right\Vert <+\infty .$ Actually, to estimate $(\lambda -T_{O})^{-1}g$ up to
the imaginary axis,$\ $the\textit{\ key} point is to estimate in $C^{k}$
norm the \textit{boundary }terms%
\begin{equation*}
\frac{1}{v}e^{-\frac{\lambda }{v}(x+a)}h_{-a}^{+},\ \ \frac{1}{\left\vert
v\right\vert }e^{-\frac{\lambda }{\left\vert v\right\vert }(a-x)}h_{a}^{-}.
\end{equation*}%
Consider first 
\begin{equation*}
\frac{1}{v}e^{-\frac{\lambda }{v}(x+a)}h_{-a}^{+}.
\end{equation*}%
Note that a priori $h_{-a}^{+}\in L^{1}((0,+1];\ dv).$ Since%
\begin{eqnarray*}
\frac{\partial ^{k}\left( \frac{1}{v}e^{-\frac{\lambda }{v}%
(x+a)}h_{-a}^{+}\right) }{\partial \lambda ^{k}} &=&\sum_{j=0}^{k}\left( 
\begin{array}{c}
k \\ 
j%
\end{array}%
\right) \frac{\partial ^{j}}{\partial \lambda ^{j}}\left( \frac{1}{v}e^{-%
\frac{\lambda }{v}(x+a)}\right) \frac{\partial ^{k-j}}{\partial \lambda
^{k-j}}\left( h_{-a}^{+}\right) \\
&=&\sum_{j=0}^{k}\left( 
\begin{array}{c}
k \\ 
j%
\end{array}%
\right) \left( -\frac{x+a}{v}\right) ^{j}\frac{1}{v}e^{-\frac{\lambda }{v}%
(x+a)}\frac{\partial ^{k-j}}{\partial \lambda ^{k-j}}h_{-a}^{+}
\end{eqnarray*}%
our main concern is to estimate the norms 
\begin{equation*}
\left\Vert \frac{1}{\left\vert v\right\vert }\frac{\partial ^{k}}{\partial
\lambda ^{k}}h_{-a}^{+}\right\Vert ,\ \left\Vert \frac{1}{\left\vert
v\right\vert ^{2}}\frac{\partial ^{k-1}}{\partial \lambda ^{k-1}}%
h_{-a}^{+}\right\Vert ,\ ...,\ \left\Vert \frac{1}{\left\vert v\right\vert
^{k}}\frac{\partial }{\partial \lambda }h_{-a}^{+}\right\Vert ,\ \left\Vert 
\frac{1}{\left\vert v\right\vert ^{k+1}}h_{-a}^{+}\right\Vert
\end{equation*}%
in 
\begin{equation*}
\left\{ \lambda \in 
\mathbb{C}
;\ \func{Re}\lambda \geq 0,\ \lambda \neq 0\right\} .
\end{equation*}

\section{\label{Section operators estimates}Operator estimates up to the
imaginary axis}

Since 
\begin{equation*}
h_{-a}^{+}=(1-G_{\lambda })^{-1}O_{1}\left[ e^{-\frac{2\lambda a}{\left\vert
v\right\vert }}O_{2}\ \left( \int_{-a}^{a}e^{-\frac{\lambda }{v}%
(a-y)}g(y,v)\,dy\right) +\left( \int_{-a}^{a}e^{-\frac{\lambda }{\left\vert
v\right\vert }(y+a)}g(y,v)\,dy\right) \right]
\end{equation*}%
then the key object to deal with is the resolvent $(1-G_{\lambda })^{-1}$
where%
\begin{equation*}
G_{\lambda }=O_{1}e^{-\frac{2\lambda a}{\left\vert v\right\vert }}O_{2}e^{-%
\frac{2\lambda a}{\left\vert v\right\vert }}.
\end{equation*}%
Note that $G_{\lambda }$ is defined on the closed half space $\left\{
\lambda \in 
\mathbb{C}
;\ \func{Re}\lambda \geq 0\right\} $ and $\left\Vert G_{\lambda }\right\Vert
\leq e^{-4a\func{Re}\lambda }\ $ ($\func{Re}\lambda \geq 0).$ The
derivatives of $G_{\lambda }$ 
\begin{eqnarray*}
\frac{\partial ^{p}G_{\lambda }}{\partial \lambda ^{p}} &=&\sum_{j=0}^{p}%
\left( 
\begin{array}{c}
p \\ 
j%
\end{array}%
\right) O_{1}\frac{\partial ^{j}}{\partial \lambda ^{j}}\left( e^{-\frac{%
2\lambda a}{\left\vert v\right\vert }}\right) O_{2}\frac{\partial ^{p-j}}{%
\partial \lambda ^{p-j}}\left( e^{-\frac{2\lambda a}{\left\vert v\right\vert 
}}\right) \\
&=&\left( -2a\right) ^{p}\sum_{j=0}^{p}\left( 
\begin{array}{c}
p \\ 
j%
\end{array}%
\right) O_{1}\left( \frac{1}{\left\vert v\right\vert ^{j}}e^{-\frac{2\lambda
a}{\left\vert v\right\vert }}\right) O_{2}\left( \frac{1}{\left\vert
v\right\vert ^{p-j}}e^{-\frac{2\lambda a}{\left\vert v\right\vert }}\right)
\ \ (0\leq p\leq k)
\end{eqnarray*}%
are \textit{uniformly bounded} on\ $\left\{ \lambda \in 
\mathbb{C}
;\ \func{Re}\lambda \geq 0\right\} $ (for the usual operator norms) provided
that 
\begin{equation}
O_{1}\frac{1}{\left\vert v\right\vert ^{j}}O_{2}\frac{1}{\left\vert
v\right\vert ^{p-j}}\text{ are bounded operators \ }(0\leq j\leq p\leq k)%
\text{.}  \label{Unif boudedness derivaives}
\end{equation}%
We need also the additional conditions%
\begin{equation*}
G_{\lambda }:L^{1}((0,+1];\ dv)\rightarrow L^{1}((0,+1];\ \frac{dv}{%
\left\vert v\right\vert ^{k+1}})\ \ 
\end{equation*}%
and%
\begin{equation*}
\frac{d}{d\lambda }G_{\lambda }:L^{1}((0,+1];\ dv)\rightarrow L^{1}((0,+1];\ 
\frac{dv}{\left\vert v\right\vert ^{k}})
\end{equation*}%
or more precisely%
\begin{equation}
\frac{1}{\left\vert v\right\vert ^{k+1}}O_{1}O_{2},\ \frac{1}{\left\vert
v\right\vert ^{k}}O_{1}\frac{1}{\left\vert v\right\vert }O_{2}\text{ and }%
\frac{1}{\left\vert v\right\vert ^{k}}O_{1}O_{2}\frac{1}{\left\vert
v\right\vert }\ \text{are bounded operators.}
\label{Uniformly bounded derivatives suppl}
\end{equation}

\begin{remark}
\label{Remark Kernel assumptions}If $O_{1}$ or $O_{2}$ is weakly compact
then at least one of the two is an integral operator and consequently
Assumptions (\ref{Unif boudedness derivaives})(\ref{Uniformly bounded
derivatives suppl}) are checkable in principle.
\end{remark}

We will show, under the condition $\beta _{1}+\beta _{2}>0,$ that $r_{\sigma
}\left( G_{\lambda }\right) <1\ (\func{Re}\lambda \geq 0$,$\ \lambda \neq 0)$
and $(1-G_{\lambda })^{-1}$ extends continuously (in the strong operator
topology) to $i%
\mathbb{R}
\backslash \left\{ 0\right\} ,$ (see Corollary \ref{Corollary prolomgement
to imaginary axis}).\ We are ready to give our key estimates of the \textit{%
derivatives} of $(1-G_{\lambda })^{-1}$ in terms of $\left\Vert
(1-G_{\lambda })^{-1}\right\Vert .$

\begin{lemma}
\label{Lemma Operator Estimates}Suppose that (\ref{Unif boudedness
derivaives})(\ref{Uniformly bounded derivatives suppl}) are satisfied. Then
there exists a constant $C>0$ such that for all $s\in 
\mathbb{R}
,\ s\neq 0$ 
\begin{equation*}
\left\Vert \frac{d^{j}}{ds^{j}}(1-G_{is})^{-1}\right\Vert _{\mathcal{L}%
(L^{1}(dv);L^{1}(\left\vert v\right\vert ^{-k-1+j}dv))}\leq
C\sum_{l=0}^{j+1}\left\Vert (1-G_{is})^{-1}\right\Vert ^{l}\ \ \ (1\leq
j\leq k).
\end{equation*}
\end{lemma}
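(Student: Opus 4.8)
The natural strategy is induction on $j$. The base case $j=0$ is not a derivative statement but the mapping property $\|(1-G_{is})^{-1}\|_{\mathcal{L}(L^1(dv);L^1(|v|^{-k-1}dv))}\le C(1+\|(1-G_{is})^{-1}\|)$, which I would obtain by writing $(1-G_{is})^{-1}=I+G_{is}(1-G_{is})^{-1}$ and using that $G_{is}=O_1e^{-2isa/|v|}O_2e^{-2isa/|v|}$ maps $L^1(dv)$ into $L^1(|v|^{-k-1}dv)$ boundedly, uniformly in $s$, by the first operator in \eqref{Uniformly bounded derivatives suppl} (namely $|v|^{-k-1}O_1O_2$ is bounded, and the multiplication operators $e^{-2isa/|v|}$ are contractions on every $L^1$ weight). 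So $(1-G_{is})^{-1}=I+[\,|v|^{-k-1}\text{-bounded operator}\,]\circ(1-G_{is})^{-1}$ gives the $l\le 1$ terms.

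For the inductive step, the key device — as the excerpt itself signals — is to differentiate the resolvent identity. From $(1-G_{is})(1-G_{is})^{-1}=I$ one gets, by the product rule,
\[
\frac{d}{ds}(1-G_{is})^{-1}=(1-G_{is})^{-1}\Bigl(\frac{d}{ds}G_{is}\Bigr)(1-G_{is})^{-1},
\]
and more generally, by the Leibniz rule applied to this first-order ODE, $\frac{d^j}{ds^j}(1-G_{is})^{-1}$ is a sum over compositions of the form $(1-G_{is})^{-1}\bigl(G_{is}^{(p_1)}\bigr)(1-G_{is})^{-1}\bigl(G_{is}^{(p_2)}\bigr)\cdots(1-G_{is})^{-1}$ with $p_1+\cdots+p_m=j$ and $m\le j$ factors $G_{is}^{(p_i)}$, hence $m+1\le j+1$ factors of $(1-G_{is})^{-1}$; this explains the exponent $j+1$ in the bound. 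The task then is a bookkeeping argument tracking which weighted $L^1$ space each factor maps into. The outermost (leftmost) $(1-G_{is})^{-1}$ and one derivative factor must together land in $L^1(|v|^{-k-1+j}dv)$; the innermost factors absorb the loss of weight. Here one uses the computed formula for $G_{is}^{(p)}$ as $(-2a)^p\sum_{l}\binom{p}{l}O_1(|v|^{-l}e^{-2isa/|v|})O_2(|v|^{-(p-l)}e^{-2isa/|v|})$, together with the boundedness hypotheses \eqref{Unif boudedness derivaives} (for the generic intermediate factors, which only need to be bounded on $L^1(dv)$) and \eqref{Uniformly bounded derivatives suppl} (for the one factor that has to gain the extra weight $|v|^{-k-1+j}$, respectively $|v|^{-k+j-1}$ after one $\lambda$-derivative). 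The point of assumption \eqref{Uniformly bounded derivatives suppl} is precisely that it provides the three operators $|v|^{-k-1}O_1O_2$, $|v|^{-k}O_1|v|^{-1}O_2$, $|v|^{-k}O_1O_2|v|^{-1}$ needed to cover $G_{is}$ and its first $s$-derivative as maps into the heaviest weighted targets.

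The main obstacle will be the combinatorial/weight-tracking organisation of the induction: one must show that in every term of the Leibniz expansion of $\frac{d^j}{ds^j}(1-G_{is})^{-1}$ the weights can be distributed among the factors so that (i) exactly one factor carries all the burden of mapping into $L^1(|v|^{-k-1+j}dv)$ while being handled by \eqref{Uniformly bounded derivatives suppl}, and (ii) every other factor is a bounded operator on the relevant ordinary $L^1$ spaces via \eqref{Unif boudedness derivaives}, with all bounds uniform in $s\ne 0$ because the only $s$-dependence sits inside the contractive multiplication operators $e^{-2isa/|v|}$. A clean way to run this is to prove the slightly stronger statement that for $0\le i\le j\le k$,
\[
\Bigl\|\tfrac{d^j}{ds^j}(1-G_{is})^{-1}\Bigr\|_{\mathcal{L}(L^1(dv);\,L^1(|v|^{-k-1+j}dv))}\le C\sum_{l=0}^{j+1}\|(1-G_{is})^{-1}\|^l,
\]
feeding the inductive hypothesis at order $j-1$ into the differentiated identity and peeling off one $G_{is}$ via $(1-G_{is})^{-1}=I+G_{is}(1-G_{is})^{-1}$ whenever an extra power of the weight is needed. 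I would close by noting that the factor $(-2a)^p$ and the finitely many binomial coefficients are absorbed into the constant $C$, and that since $1\le j\le k$ only finitely many cases arise, uniformity of $C$ is automatic.
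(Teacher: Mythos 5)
Your overall strategy --- differentiating $\frac{d}{ds}(1-G_{is})^{-1}=(1-G_{is})^{-1}G_{is}'(1-G_{is})^{-1}$ with the Leibniz rule, inducting on the number of derivatives, invoking (\ref{Unif boudedness derivaives}) and (\ref{Uniformly bounded derivatives suppl}) to track weights, and peeling $(1-G_{is})^{-1}=I+G_{is}(1-G_{is})^{-1}$ when a weight must be gained --- is the same as the paper's, and your explanation of the exponent $j+1$ is correct. However, your proposed base case $j=0$ is false. You claim
\begin{equation*}
\left\Vert (1-G_{is})^{-1}\right\Vert _{\mathcal{L}(L^{1}(dv);L^{1}(\left\vert v\right\vert ^{-k-1}dv))}\leq C\bigl(1+\Vert(1-G_{is})^{-1}\Vert\bigr)
\end{equation*}
from $(1-G_{is})^{-1}=I+G_{is}(1-G_{is})^{-1}$. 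The summand $G_{is}(1-G_{is})^{-1}$ indeed maps $L^{1}(dv)$ boundedly into $L^{1}(|v|^{-k-1}dv)$ thanks to (\ref{Uniformly bounded derivatives suppl}), but the identity $I$ does not: with $\phi(v)=v^{k}\in L^{1}((0,1);dv)$ one gets $\int_{0}^{1}v^{k}\cdot v^{-k-1}\,dv=\int_{0}^{1}v^{-1}dv=\infty$. The left-hand side is therefore $+\infty$ for every $s$, so the strengthened statement ``for $0\le j\le k$'' which you propose to run the induction on fails already at $j=0$.

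The lemma is claimed only for $1\le j\le k$, and the paper starts the induction at $j=1$: expanding $\frac{d}{d\lambda}(1-G_{\lambda})^{-1}=\bigl(I+G_{\lambda}(1-G_{\lambda})^{-1}\bigr)G_{\lambda}'\bigl(I+G_{\lambda}(1-G_{\lambda})^{-1}\bigr)$ puts a $G_{\lambda}'$ or $G_{\lambda}$ factor in every summand, and these supply the required weight through (\ref{Uniformly bounded derivatives suppl}). Your base case must be $j=1$, established in exactly this way; without it the induction has no valid starting point. Once that is fixed, the rest of your sketch is in the spirit of the paper's, but the weight-bookkeeping should be made precise along the paper's lines: in the expansion $f^{(p+1)}=\sum_{q,m}\binom{p}{q}\binom{p-q}{m}\,f^{(q)}G^{(p-q-m+1)}f^{(m)}$ with $f=(1-G_{\lambda})^{-1}$, the rightmost $f^{(m)}$ and the middle $G^{(p-q-m+1)}$ are controlled in $\mathcal{L}(L^{1}(dv))$, and it is the \emph{leftmost} factor $f^{(q)}$ that alone carries the weight via the inductive hypothesis and the nesting $L^{1}(|v|^{-k-1+q}dv)\subset L^{1}(|v|^{-k-1+p}dv)$ for $q\le p$; your description that ``the innermost factors absorb the loss of weight'' does not capture this.
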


\begin{proof}
Note that%
\begin{equation}
(1-G_{\lambda })^{-1}=I+G_{\lambda }(1-G_{\lambda })^{-1}
\label{Resolvent of G lambda}
\end{equation}%
and%
\begin{equation}
\frac{d}{d\lambda }(1-G_{\lambda })^{-1}=(1-G_{\lambda })^{-1}G_{\lambda
}^{\prime }(1-G_{\lambda })^{-1}  \label{Composite first drivative}
\end{equation}%
so%
\begin{eqnarray*}
\frac{d}{d\lambda }(1-G_{\lambda })^{-1} &=&\left( I+G_{\lambda
}(1-G_{\lambda })^{-1}\right) G_{\lambda }^{\prime }\left( I+G_{\lambda
}(1-G_{\lambda })^{-1}\right) \\
&=&G_{\lambda }^{\prime }\left( I+G_{\lambda }(1-G_{\lambda })^{-1}\right)
+G_{\lambda }(1-G_{\lambda })^{-1}G_{\lambda }^{\prime }\left( I+G_{\lambda
}(1-G_{\lambda })^{-1}\right)
\end{eqnarray*}%
and (\ref{Unif boudedness derivaives})(\ref{Uniformly bounded derivatives
suppl}) show that%
\begin{equation*}
\frac{d}{d\lambda }(1-G_{\lambda })^{-1}:L^{1}((0,+1];\ dv)\rightarrow
L^{1}((0,+1];\ \frac{dv}{\left\vert v\right\vert ^{k}})
\end{equation*}%
and that there exists a constant $C>0$ such that%
\begin{equation*}
\left\Vert \frac{d}{d\lambda }(1-G_{\lambda })^{-1}\right\Vert _{\mathcal{L}%
(L^{1}(dv);L^{1}(\left\vert v\right\vert ^{-k}dv))}\leq C\left( 1+\left\Vert
(1-G_{\lambda })^{-1}\right\Vert +\left\Vert (1-G_{\lambda
})^{-1}\right\Vert ^{2}\right) .
\end{equation*}%
\textbf{\ }Let us show by induction that 
\begin{equation*}
\frac{d^{j}}{d\lambda ^{j}}(1-G_{\lambda })^{-1}:L^{1}((0,+1];\
dv)\rightarrow L^{1}((0,+1];\ \frac{dv}{\left\vert v\right\vert ^{k+1-j}})\
\ (1\leq j\leq k)
\end{equation*}%
and there exists a constant $C>0$\ such that%
\begin{equation}
\left\Vert \frac{d^{j}}{d\lambda ^{j}}(1-G_{\lambda })^{-1}\right\Vert _{%
\mathcal{L}(L^{1}(dv);L^{1}(\left\vert v\right\vert ^{-k-1+j}dv))}\leq
C\left( 1+\sum_{l=1}^{j+1}\left\Vert (1-G_{\lambda })^{-1}\right\Vert
^{l}\right) .  \label{Estimate}
\end{equation}%
\textbf{\ } We already know that this statement is true for $j=1.\ $It
suffices to show that if $1\leq p<k$ and that if 
\begin{equation*}
\left\Vert \frac{d^{j}}{d\lambda ^{j}}(1-G_{\lambda })^{-1}\right\Vert _{%
\mathcal{L}(L^{1}(dv);L^{1}(\left\vert v\right\vert ^{-k-1+j}dv))}\leq
C\left( 1+\sum_{l=1}^{j+1}\left\Vert (1-G_{\lambda })^{-1}\right\Vert
^{l}\right) \ \ \ (1\leq j\leq p)
\end{equation*}%
then estimate (\ref{Estimate}) is true for $j=p+1.$ Let 
\begin{equation*}
f(\lambda )=(1-G_{\lambda })^{-1}.
\end{equation*}%
According to \textbf{(}\ref{Composite first drivative}\textbf{), }$f(\lambda
)$ satisfies the differential equation\textbf{\ }%
\begin{equation}
f^{\prime }(\lambda )=f(\lambda )G^{\prime }(\lambda )f(\lambda ).
\label{Differential Eq}
\end{equation}%
Differentiating (\ref{Differential Eq}) $p$ times we get 
\begin{eqnarray*}
\frac{d^{p+1}}{d\lambda ^{p+1}}f &=&\sum_{q=0}^{p}\left( 
\begin{array}{c}
p \\ 
q%
\end{array}%
\right) \left( \frac{d^{q}}{d\lambda ^{q}}f\right) \frac{d^{p-q}}{d\lambda
^{p-q}}\left( G^{\prime }(\lambda )f(\lambda )\right) \\
&=&\sum_{q=0}^{p}\left( 
\begin{array}{c}
p \\ 
q%
\end{array}%
\right) \left( \frac{d^{q}}{d\lambda ^{q}}f\right) \sum_{m=0}^{p-q}\left( 
\begin{array}{c}
p-q \\ 
m%
\end{array}%
\right) \left( \frac{d^{p-q-m}}{d\lambda ^{p-q-m}}G^{\prime }(\lambda
)\right) \left( \frac{d^{m}}{d\lambda ^{m}}f\right) \\
&=&\sum_{q=0}^{p}\left( 
\begin{array}{c}
p \\ 
q%
\end{array}%
\right) \left( \frac{d^{q}}{d\lambda ^{q}}f\right) \sum_{m=0}^{p-q}\left( 
\begin{array}{c}
p-q \\ 
m%
\end{array}%
\right) \left( \frac{d^{p-q-m+1}}{d\lambda ^{p-q-m+1}}G(\lambda )\right)
\left( \frac{d^{m}}{d\lambda ^{m}}f\right) \\
&=&\sum_{q=0}^{p}\sum_{m=0}^{p-q}\left( 
\begin{array}{c}
p \\ 
q%
\end{array}%
\right) \left( 
\begin{array}{c}
p-q \\ 
m%
\end{array}%
\right) \left( \frac{d^{q}}{d\lambda ^{q}}f\right) \left( \frac{d^{p-q-m+1}}{%
d\lambda ^{p-q-m+1}}G(\lambda )\right) \left( \frac{d^{m}}{d\lambda ^{m}}%
f\right) .
\end{eqnarray*}%
Note that $\left\vert v\right\vert ^{-k-1+j}\geq \left\vert v\right\vert
^{-k-1+j^{\prime }}\ (j\leq j^{\prime })$ shows that%
\begin{equation*}
L^{1}(\left\vert v\right\vert ^{-k-1+j}dv)\subset L^{1}(\left\vert
v\right\vert ^{-k-1+j^{\prime }}dv)
\end{equation*}%
and%
\begin{equation*}
\left\Vert \varphi \right\Vert _{L^{1}(\left\vert v\right\vert
^{-k-1+j^{\prime }}dv)}\leq \left\Vert \varphi \right\Vert
_{L^{1}(\left\vert v\right\vert ^{-k-1+j}dv)}\ \ \forall \varphi \in
L^{1}(\left\vert v\right\vert ^{-k-1+j}dv).
\end{equation*}%
Thus%
\begin{equation*}
\left\Vert \frac{d^{m}}{d\lambda ^{m}}f(\lambda )\right\Vert _{\mathcal{L}%
(L^{1}(dv))}\leq \left\Vert \frac{d^{m}}{d\lambda ^{m}}f(\lambda
)\right\Vert _{\mathcal{L}(L^{1}(dv);L^{1}(\left\vert v\right\vert
^{-k-1+m}dv))}
\end{equation*}%
and (by assumption)%
\begin{equation*}
\left\Vert \frac{d^{m}}{d\lambda ^{m}}f(\lambda )\right\Vert _{\mathcal{L}%
(L^{1}(dv);L^{1}(\left\vert v\right\vert ^{-k-1+m}dv))}\leq C\left(
1+\sum_{l=1}^{m+1}\left\Vert f(\lambda )\right\Vert ^{l}\right) \ \ (m\leq
p-q)
\end{equation*}%
so%
\begin{equation*}
\left\Vert \frac{d^{m}}{d\lambda ^{m}}f(\lambda )\right\Vert _{\mathcal{L}%
(L^{1}(dv))}\leq C\left( 1+\sum_{l=1}^{m+1}\left\Vert f(\lambda )\right\Vert
^{l}\right) \ \ (m\leq p-q).
\end{equation*}%
By (\ref{Unif boudedness derivaives}) the derivatives $\frac{d^{p-q-m+1}}{%
d\lambda ^{p-q-m+1}}G(\lambda )$ are uniformly bounded for the natural
operator norms. Similarly, 
\begin{equation*}
\frac{d^{q}}{d\lambda ^{q}}f(\lambda ):L^{1}(dv)\rightarrow L^{1}(\left\vert
v\right\vert ^{-k-1+q}dv)\subset L^{1}(\left\vert v\right\vert ^{-k-1+p}dv)\
\ (q\leq p)
\end{equation*}%
and%
\begin{equation*}
\left\Vert \frac{d^{q}}{d\lambda ^{q}}f(\lambda )\right\Vert _{\mathcal{L}%
(L^{1}(dv);L^{1}(\left\vert v\right\vert ^{-k-1+p}dv))}\leq \left\Vert \frac{%
d^{q}}{d\lambda ^{q}}f(\lambda )\right\Vert _{\mathcal{L}(L^{1}(dv);L^{1}(%
\left\vert v\right\vert ^{-k-1+q}dv))}
\end{equation*}%
so (using the assumption) 
\begin{eqnarray*}
\left\Vert \frac{d^{q}}{d\lambda ^{q}}f(\lambda )\right\Vert _{\mathcal{L}%
(L^{1}(dv);L^{1}(\left\vert v\right\vert ^{-k-1+p}dv))} &\leq &\left\Vert 
\frac{d^{q}}{d\lambda ^{q}}f(\lambda )\right\Vert _{\mathcal{L}%
(L^{1}(dv);L^{1}(\left\vert v\right\vert ^{-k-1+q}dv))} \\
&\leq &C\left( 1+\sum_{r=1}^{q+1}\left\Vert f(\lambda )\right\Vert
^{r}\right) \ \ (q\leq p).
\end{eqnarray*}%
On the other hand%
\begin{equation*}
\left( 1+\sum_{l=1}^{m+1}\left\Vert f(\lambda )\right\Vert ^{l}\right)
\left( 1+\sum_{r=1}^{q+1}\left\Vert f(\lambda )\right\Vert ^{r}\right)
=1+\sum_{l=1}^{m+1}\left\Vert f(\lambda )\right\Vert
^{l}+\sum_{r=1}^{q+1}\left\Vert f(\lambda )\right\Vert
^{r}+\sum_{l=1}^{m+1}\sum_{r=1}^{q+1}\left\Vert f(\lambda )\right\Vert
^{l+r}.
\end{equation*}%
Since $m\leq p-q$ we have%
\begin{equation*}
l+r\leq m+1+q+1\leq p+2
\end{equation*}%
and there exists $C>0$ such that%
\begin{equation*}
\left\Vert \frac{d^{p+1}}{d\lambda ^{p+1}}f\right\Vert _{\mathcal{L}%
(L^{1}(dv);L^{1}(\left\vert v\right\vert ^{-k-1+p}dv))}\leq C\left(
1+\sum_{l=1}^{p+2}\left\Vert f(\lambda )\right\Vert ^{l}\right) .
\end{equation*}%
Finally%
\begin{equation*}
\left\Vert \frac{d^{p+1}}{d\lambda ^{p+1}}f\right\Vert _{\mathcal{L}%
(L^{1}(dv);L^{1}(\left\vert v\right\vert ^{-k-1+p+1}dv))}\leq \left\Vert 
\frac{d^{p+1}}{d\lambda ^{p+1}}f\right\Vert _{\mathcal{L}(L^{1}(dv);L^{1}(%
\left\vert v\right\vert ^{-k-1+p}dv))}\leq C\left(
1+\sum_{l=1}^{(p+1)+1}\left\Vert f(\lambda )\right\Vert ^{l}\right)
\end{equation*}%
and hence we are done.
\end{proof}

\section{\label{Section Boundary fluxes}Estimates of boundary fluxes}

We note that if 
\begin{equation*}
O_{1}:L^{1}((0,+1];\ \frac{dv}{\left\vert v\right\vert ^{k+1}})\rightarrow
L^{1}((0,+1];\ \frac{dv}{\left\vert v\right\vert ^{k+1}})\text{ is bounded}
\end{equation*}%
i.e. if 
\begin{equation}
\frac{1}{\left\vert v\right\vert ^{k+1}}O_{1}\left\vert v\right\vert ^{k+1}%
\text{ is bounded}  \label{Hyp 1}
\end{equation}%
then (\ref{Resolvent of G lambda}) gives 
\begin{eqnarray*}
h_{-a}^{+} &=&(1-G_{\lambda })^{-1}O_{1}\left[ e^{-\frac{2\lambda a}{%
\left\vert v\right\vert }}O_{2}\ \left( \int_{-a}^{a}e^{-\frac{\lambda }{v}%
(a-y)}g(y,v)\,dy\right) +\left( \int_{-a}^{a}e^{-\frac{\lambda }{\left\vert
v\right\vert }(y+a)}g(y,v)\,dy\right) \right] \\
&=&G_{\lambda }(1-G_{\lambda })^{-1}O_{1}\left[ e^{-\frac{2\lambda a}{%
\left\vert v\right\vert }}O_{2}\ \left( \int_{-a}^{a}e^{-\frac{\lambda }{v}%
(a-y)}g(y,v)\,dy\right) +\left( \int_{-a}^{a}e^{-\frac{\lambda }{\left\vert
v\right\vert }(y+a)}g(y,v)\,dy\right) \right] \\
&&+O_{1}e^{-\frac{2\lambda a}{\left\vert v\right\vert }}O_{2}\ \left(
\int_{-a}^{a}e^{-\frac{\lambda }{v}(a-y)}g(y,v)\,dy\right) +O_{1}\left(
\int_{-a}^{a}e^{-\frac{\lambda }{\left\vert v\right\vert }%
(y+a)}g(y,v)\,dy\right)
\end{eqnarray*}%
and%
\begin{eqnarray*}
\left\Vert h_{-a}^{+}\right\Vert _{L^{1}(\left\vert v\right\vert
^{-(k+1)}dv)} &\leq &2\left\Vert \frac{1}{\left\vert v\right\vert ^{k+1}}%
O_{1}O_{2}\right\Vert \left\Vert (1-G_{\lambda })^{-1}\right\Vert \left\Vert
g\right\Vert \\
&&+\left\Vert \frac{1}{\left\vert v\right\vert ^{k+1}}O_{1}O_{2}\right\Vert
\left\Vert g\right\Vert +\left\Vert \frac{1}{\left\vert v\right\vert ^{k+1}}%
O_{1}\left\vert v\right\vert ^{k+1}\right\Vert \left\Vert \frac{g}{%
\left\vert v\right\vert ^{k+1}}\right\Vert .
\end{eqnarray*}%
Leibnitz's rule shows that $\frac{d^{p}h_{-a}^{+}}{d\lambda ^{p}}$ is given
by%
\begin{equation*}
\sum_{j=0}^{p}\left( 
\begin{array}{c}
p \\ 
j%
\end{array}%
\right) \left( \frac{d^{j}}{d\lambda ^{j}}(1-G_{\lambda })^{-1}\right) \frac{%
d^{p-j}}{d\lambda ^{p-j}}\left[ O_{1}e^{-\frac{2\lambda a}{\left\vert
v\right\vert }}O_{2}\ \int_{-a}^{a}e^{-\frac{\lambda }{v}(a-y)}g(y,v)%
\,dy+O_{1}\int_{-a}^{a}e^{-\frac{\lambda }{\left\vert v\right\vert }%
(y+a)}g(y,v)\,dy\right]
\end{equation*}%
or indeed by%
\begin{eqnarray*}
&&\sum_{j=0}^{p}(-1)^{p-j}\left( 
\begin{array}{c}
p \\ 
j%
\end{array}%
\right) \left( \frac{d^{j}}{d\lambda ^{j}}(1-G_{\lambda })^{-1}\right) \times
\\
&&\sum_{m=0}^{p-j}\left( 
\begin{array}{c}
p-j \\ 
m%
\end{array}%
\right) \left( 2a\right) ^{m}O_{1}\frac{1}{\left\vert v\right\vert ^{m}}e^{-%
\frac{2\lambda a}{\left\vert v\right\vert }}O_{2}\ \int_{-a}^{a}e^{-\frac{%
\lambda }{v}(a-y)}\left( a-y\right) ^{p-j-m}\frac{g(y,v)}{\left\vert
v\right\vert ^{p-j-m}}\,dy \\
&&+\sum_{j=0}^{p}(-1)^{p-j}\left( 
\begin{array}{c}
p \\ 
j%
\end{array}%
\right) \left( \frac{d^{j}}{d\lambda ^{j}}(1-G_{\lambda })^{-1}\right)
\left( O_{1}\int_{-a}^{a}e^{-\frac{\lambda }{\left\vert v\right\vert }%
(y+a)}(y+a)^{p-j}\frac{g(y,v)}{\left\vert v\right\vert ^{p-j}}\,dy\right) .
\end{eqnarray*}%
Finally, Lemma \ref{Lemma Operator Estimates} implies:

\begin{lemma}
\label{Lemma Estimate left flux}Suppose that (\ref{Unif boudedness
derivaives})(\ref{Uniformly bounded derivatives suppl})(\ref{Hyp 1}) are
satisfied. There exists a constant $C>0$ such that 
\begin{equation*}
\left\Vert \frac{d^{j}h_{-a}^{+}}{d\lambda ^{j}}\right\Vert
_{L^{1}(\left\vert v\right\vert ^{-k-1+p}dv)}\leq C\left(
\sum_{l=0}^{j+1}\left\Vert (1-G_{is})^{-1}\right\Vert ^{l}\right) \left\Vert 
\frac{g}{\left\vert v\right\vert ^{k+1}}\right\Vert \ \ (0\leq j\leq k).
\end{equation*}
\end{lemma}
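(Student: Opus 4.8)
The plan is to estimate, for each fixed $j$ with $1\le j\le k$, every summand of the Leibniz-rule expansion of $\frac{d^{j}h_{-a}^{+}}{d\lambda^{j}}$ written out just before the statement (these derivatives exist and are continuous on $i\mathbb{R}\setminus\{0\}$ by Lemma \ref{Lemma Operator Estimates} together with the continuous extension of $(1-G_{\lambda})^{-1}$), and then to add the finitely many contributions; the case $j=0$ has already been carried out in the discussion preceding the statement. Up to constants depending only on $j$, $k$ and $a$, a generic summand of that expansion has, on the imaginary axis, the form
\[
\left(\frac{d^{j'}}{d\lambda^{j'}}(1-G_{\lambda})^{-1}\right)O_{1}\frac{1}{|v|^{m}}e^{-\frac{2\lambda a}{|v|}}O_{2}\!\left(\int_{-a}^{a}e^{-\frac{\lambda}{v}(a-y)}(a-y)^{r}\frac{g(y,v)}{|v|^{r}}\,dy\right),\qquad 0\le j'\le j,\ m+r=j-j',
\]
or the analogous expression in which the block $e^{-2\lambda a/|v|}O_{2}(\cdots)$ is replaced by $\int_{-a}^{a}e^{-\frac{\lambda}{|v|}(y+a)}(y+a)^{j-j'}\frac{g(y,v)}{|v|^{j-j'}}\,dy$. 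I would estimate the three constituents — the resolvent derivative, the operator block built from $O_{1}$, $O_{2}$ and the exponential multiplier, and the $g$-dependent integral — separately and then multiply.

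On the imaginary axis the exponentials have modulus one and $|a\pm y|\le 2a$, so each $g$-integral is controlled in $L^{1}(dv)$ by $(2a)^{r}\|g/|v|^{r}\|$, and since $|v|<1$ on $\Omega$ and $r\le j\le k$ this is at most $(2a)^{k}\|g/|v|^{k+1}\|$. For the operator block, positivity of $O_{1}$ and $O_{2}$ lets one discard the unimodular multiplier $e^{-2\lambda a/|v|}$ and dominate the block by a genuine composition of $O_{1}$, $O_{2}$ with powers of $1/|v|$; that composition is one of those declared bounded in (\ref{Unif boudedness derivaives}), (\ref{Uniformly bounded derivatives suppl}) and (\ref{Hyp 1}), once any surplus weight has been pushed to the far left of $O_{1}$ (legitimate because $|v|<1$ only shrinks the weight).

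For $j'\ge 1$ the remaining factor $\frac{d^{j'}}{d\lambda^{j'}}(1-G_{is})^{-1}$ is controlled by Lemma \ref{Lemma Operator Estimates}, which bounds its norm as a map $L^{1}(dv)\to L^{1}(|v|^{-k-1+j'}dv)$ by $C\sum_{l=0}^{j'+1}\|(1-G_{is})^{-1}\|^{l}$, and $L^{1}(|v|^{-k-1+j'}dv)$ embeds continuously into $L^{1}(|v|^{-k-1+j}dv)$, hence into $L^{1}(|v|^{-k-1+p}dv)$ for $p\ge j$. The summand with $j'=0$ is the only one outside the scope of Lemma \ref{Lemma Operator Estimates}; for it I would use $(1-G_{\lambda})^{-1}=I+G_{\lambda}(1-G_{\lambda})^{-1}$ exactly as in the case $j=0$: the $I$-part is bounded directly by the structural assumptions and gives the $l=0$ term, while the $G_{\lambda}(1-G_{\lambda})^{-1}$-part uses boundedness of $\frac{1}{|v|^{k+1}}G_{\lambda}$ on $L^{1}(dv)$ (from (\ref{Uniformly bounded derivatives suppl})) and of $(1-G_{\lambda})^{-1}$ on $L^{1}(dv)$, and gives the $l=1$ term. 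Multiplying the three bounds in each summand and summing over the finitely many summands yields the asserted inequality, the highest power of $\|(1-G_{is})^{-1}\|$ being $j+1$, coming from the $j'=j$ summand via Lemma \ref{Lemma Operator Estimates}.

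I expect the only genuinely delicate part of the argument to be the weight bookkeeping just described: one has to verify, in each of the many summands produced by two nested Leibniz expansions, that the total negative power of $|v|$ created — $m$ from differentiating the outer exponential, $r$ from the inner one, together with the weight built into the target space $L^{1}(|v|^{-k-1+p}dv)$ — never exceeds the budget $k+1$ that (\ref{Unif boudedness derivaives})–(\ref{Hyp 1}) and Lemma \ref{Lemma Operator Estimates} make available, and that the weighted space into which a given summand naturally lands does embed into the one claimed. Granting this, no idea beyond Lemma \ref{Lemma Operator Estimates}, the structural hypotheses and the elementary estimates above is needed; the passage from these bounds on $\frac{d^{j}h_{-a}^{+}}{d\lambda^{j}}$ to the statement is the routine recombination already indicated in Section \ref{Section Object to estimate}, and the estimate for the right flux $h_{a}^{-}$ follows by the same argument with the roles of $O_{1}$ and $O_{2}$ interchanged.
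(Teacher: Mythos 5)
Your proposal is exactly the paper's proof: expand $\frac{d^{j}h_{-a}^{+}}{d\lambda^{j}}$ by Leibniz's rule into terms of the form $\left(\frac{d^{j'}}{d\lambda^{j'}}(1-G_{\lambda})^{-1}\right)$ applied to derivatives of the remaining factor, control the resolvent-derivative blocks via Lemma \ref{Lemma Operator Estimates}, control the $O_{1},O_{2}$-blocks via the structural assumptions (\ref{Unif boudedness derivaives})--(\ref{Hyp 1}), and handle the $j'=0$ term by $(1-G_{\lambda})^{-1}=I+G_{\lambda}(1-G_{\lambda})^{-1}$ just as in the base case $j=0$ carried out before the lemma. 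The paper is actually terser than you are — it writes down the double Leibniz expansion and simply asserts "Finally, Lemma \ref{Lemma Operator Estimates} implies" the estimate, leaving all the weight bookkeeping implicit, whereas you at least flag the $j'=0$ summand and the weight-tracking as the delicate points; so your account is, if anything, a more faithful version of the same argument. One small caveat: your closing sentence about obtaining the $h_{a}^{-}$ estimate by "interchanging $O_{1}$ and $O_{2}$" does not match what the paper actually does (Proposition \ref{Proposition Estimate right flux} and Corollary \ref{Corollary Estimate right flux} derive it from $h_{a}^{-}=O_{2}\!\left(e^{-2\lambda a/|v|}h_{-a}^{+}+\int\cdots\right)$ and the already-proved bound on $h_{-a}^{+}$, under the additional hypothesis (\ref{Hyp sur O2})), but that remark lies outside the scope of the present lemma.
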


We deal now with $h_{a}^{-}$.

\begin{proposition}
\label{Proposition Estimate right flux}Suppose that (\ref{Unif boudedness
derivaives})(\ref{Uniformly bounded derivatives suppl})(\ref{Hyp 1}) are
satisfied. If%
\begin{equation}
\left\vert v\right\vert ^{-(k+1-p)}O_{2}\left\vert v\right\vert ^{k+1-p}%
\text{ is bounded }\left( 0\leq p\leq k\right)  \label{Hyp sur O2}
\end{equation}%
then there exists a constant $C>0$ such that%
\begin{equation*}
\left\Vert h_{a}^{-}\right\Vert _{L^{1}(\left\vert v\right\vert
^{-(k+1)}dv)}\leq C\left[ \left\Vert h_{-a}^{+}\right\Vert
_{L^{1}(\left\vert v\right\vert ^{-(k+1)}dv)}+\left\Vert \frac{g}{\left\vert
v\right\vert ^{k+1}}\right\Vert \right]
\end{equation*}%
\begin{equation*}
\left\Vert \frac{d^{p}h_{a}^{-}}{d\lambda ^{p}}\right\Vert
_{L^{1}(\left\vert v\right\vert ^{-k-1+p}dv)}\leq C\left[ \sum_{j=0}^{p}%
\left\Vert \frac{d^{p-j}h_{-a}^{+}}{d\lambda ^{p-j}}\right\Vert
_{L^{1}(\left\vert v\right\vert ^{-k-1+p-j}dv)}+\left\Vert \frac{g}{%
\left\vert v\right\vert ^{k+1}}\right\Vert \right] \ \ (1\leq p\leq k).
\end{equation*}
\end{proposition}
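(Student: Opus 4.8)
The plan is to argue directly from the representation (\ref{Flux negatif}) of the right flux,
\[
h_a^- = O_2\!\left(e^{-\frac{2\lambda a}{|v|}}h_{-a}^+ + \int_{-a}^{a} e^{-\frac{\lambda}{v}(a-y)} g(y,v)\,dy\right),
\]
bearing in mind that the argument of $O_2$ is a function of $v\in(0,1)$, so throughout $|v|=v$ and the exponential factors have modulus at most $1$ on $\{\func{Re}\lambda\ge 0\}$. Since $O_2$ is a fixed bounded operator and $h_{-a}^+$ is, by Lemma \ref{Lemma Estimate left flux}, a $C^k$ function of $\lambda$ on $\{\func{Re}\lambda\ge 0,\ \lambda\ne 0\}$ taking values in the relevant weighted spaces, the same is true of $h_a^-$, and one may differentiate under $O_2$.

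For the first inequality I would use the elementary factorisation, valid by (\ref{Hyp sur O2}) with $p=0$,
\[
\|O_2\varphi\|_{L^1(|v|^{-(k+1)}dv)} = \|(|v|^{-(k+1)}O_2|v|^{k+1})(|v|^{-(k+1)}\varphi)\| \le \||v|^{-(k+1)}O_2|v|^{k+1}\|\;\|\varphi\|_{L^1(|v|^{-(k+1)}dv)},
\]
applied to $\varphi = e^{-2\lambda a/v}h_{-a}^+$ (bounding the exponential by $1$) and to $\varphi(v)=\int_{-a}^{a} e^{-\lambda(a-y)/v}g(y,v)\,dy$ (bounding it pointwise by $\int_{-a}^a|g(y,v)|\,dy$, whose weighted $L^1$ norm over $v>0$ is at most $\|g/|v|^{k+1}\|$). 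This gives the claimed bound on $\|h_a^-\|_{L^1(|v|^{-(k+1)}dv)}$.

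For the derivatives, Leibniz's rule inside $O_2$ yields
\[
\frac{d^p h_a^-}{d\lambda^p} = O_2\!\left(\sum_{j=0}^{p}\binom{p}{j}\Bigl(-\tfrac{2a}{v}\Bigr)^{j} e^{-\frac{2\lambda a}{v}}\,\frac{d^{p-j}h_{-a}^+}{d\lambda^{p-j}}\right) + O_2\!\left(\int_{-a}^{a}\Bigl(-\tfrac{a-y}{v}\Bigr)^{p} e^{-\frac{\lambda}{v}(a-y)} g(y,v)\,dy\right),
\]
and I would estimate each term in $L^1(|v|^{-(k+1-p)}dv)$ by the same factorisation, i.e.\ $\|O_2\psi\|_{L^1(|v|^{-(k+1-p)}dv)}\le \||v|^{-(k+1-p)}O_2|v|^{k+1-p}\|\,\|\psi\|_{L^1(|v|^{-(k+1-p)}dv)}$, finite by (\ref{Hyp sur O2}). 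The one point that needs care is the bookkeeping of weights: in the $j$-th summand the extra factor $v^{-j}$ shifts the weight exponent, $v^{-j}\cdot v^{-(k+1-p)}=v^{-(k+1-(p-j))}$, so after bounding $|e^{-2\lambda a/v}|\le 1$ that term is controlled by $\|\frac{d^{p-j}h_{-a}^+}{d\lambda^{p-j}}\|_{L^1(|v|^{-k-1+(p-j)}dv)}$, which is precisely the norm appearing in the statement; and for the $g$-term one has $|(-(a-y)/v)^p e^{-\lambda(a-y)/v}|\le (2a)^p v^{-p}$, so the factor $v^{-p}$ bridges the gap between $v^{-(k+1-p)}$ and $v^{-(k+1)}$ and, after integrating in $y$, its weighted norm is at most $(2a)^p\|g/|v|^{k+1}\|$. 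Summing over $j$ and absorbing the binomial coefficients and powers of $2a$ into $C$ gives the second estimate. There is no real obstacle here: everything reduces to the bounded factorisations through the multiplication operators $|v|^{\pm(k+1-p)}$ furnished by (\ref{Hyp sur O2}), together with $|e^{-2\lambda a/v}|,|e^{-\lambda(a-y)/v}|\le 1$ on $\{\func{Re}\lambda\ge 0\}$; the only thing to watch is that the powers of $1/v$ produced by differentiating the exponentials match exactly the weighted spaces in which the derivatives of $h_{-a}^+$ were bounded in Lemma \ref{Lemma Estimate left flux}.
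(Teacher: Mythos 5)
Your argument is correct and follows the paper's proof essentially line by line: both start from the representation (\ref{Flux negatif}), bound the exponentials by $1$ on $\{\func{Re}\lambda\ge 0\}$, apply Leibniz's rule to produce the $v^{-j}$ factors, and conjugate $O_2$ by $|v|^{\pm(k+1-p)}$ via (\ref{Hyp sur O2}) so that the weight exponents match those in Lemma \ref{Lemma Estimate left flux}. The bookkeeping $v^{-j}\cdot v^{-(k+1-p)} = v^{-(k+1-(p-j))}$ that you flag as the delicate step is exactly the identity the paper uses (written there as $\frac{1}{|v|^{j}} = |v|^{k+1-p}\cdot\frac{1}{|v|^{k+1-p+j}}$).
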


\begin{proof}
Note that%
\begin{equation*}
h_{a}^{-}=O_{2}\left( e^{-\frac{2\lambda a}{\left\vert v\right\vert }%
}h_{-a}^{+}+\int_{-a}^{a}e^{-\frac{\lambda }{v}(a-y)}g(y,v)\,dy\right)
\end{equation*}%
or%
\begin{equation*}
h_{a}^{-}=O_{2}\left[ \left\vert v\right\vert ^{k+1}\left( e^{-\frac{%
2\lambda a}{\left\vert v\right\vert }}\frac{h_{-a}^{+}}{\left\vert
v\right\vert ^{k+1}}+\int_{-a}^{a}e^{-\frac{\lambda }{v}(a-y)}\frac{g(y,v)}{%
\left\vert v\right\vert ^{k+1}}\,dy\right) \right]
\end{equation*}%
shows that%
\begin{equation*}
\left\Vert h_{a}^{-}\right\Vert _{L^{1}(\left\vert v\right\vert
^{-(k+1)}dv)}\leq \left\Vert \left\vert v\right\vert
^{-(k+1)}O_{2}\left\vert v\right\vert ^{k+1}\right\Vert \left[ \left\Vert
h_{-a}^{+}\right\Vert _{L^{1}(\left\vert v\right\vert
^{-(k+1)}dv)}+\left\Vert \frac{g}{\left\vert v\right\vert ^{k+1}}\right\Vert %
\right] .
\end{equation*}
Leibnitz's rule gives%
\begin{eqnarray*}
\frac{d^{p}h_{a}^{-}}{d\lambda ^{p}} &=&\sum_{j=0}^{p}\left( 
\begin{array}{c}
p \\ 
j%
\end{array}%
\right) O_{2}\left[ \left( \frac{d^{j}}{d\lambda ^{j}}(e^{-\frac{2\lambda a}{%
\left\vert v\right\vert }}\right) \frac{d^{p-j}h_{-a}^{+}}{d\lambda ^{p-j}}%
\right] \\
&&+(-1)^{p}O_{2}\left[ \int_{-a}^{a}e^{-\frac{\lambda }{v}(a-y)}(a-y)^{p}%
\frac{g(y,v)}{v^{p}}\,dy\right] \\
&=&\sum_{j=0}^{p}\left( 
\begin{array}{c}
p \\ 
j%
\end{array}%
\right) (-2a)^{j}O_{2}\left[ \frac{1}{\left\vert v\right\vert ^{j}}e^{-\frac{%
2\lambda a}{\left\vert v\right\vert }}\frac{d^{p-j}h_{-a}^{+}}{d\lambda
^{p-j}}\right] \\
&&+(-1)^{p}O_{2}\left[ \int_{-a}^{a}e^{-\frac{\lambda }{v}(a-y)}(a-y)^{p}%
\frac{g(y,v)}{v^{p}}\,dy\right]
\end{eqnarray*}%
or 
\begin{eqnarray*}
\frac{d^{p}h_{a}^{-}}{d\lambda ^{p}} &=&\sum_{j=0}^{p}\left( 
\begin{array}{c}
p \\ 
j%
\end{array}%
\right) (-2a)^{j}O_{2}\left[ \left\vert v\right\vert ^{k+1-p}\frac{1}{%
\left\vert v\right\vert ^{k+1-p+j}}e^{-\frac{2\lambda a}{\left\vert
v\right\vert }}\frac{d^{p-j}h_{-a}^{+}}{d\lambda ^{p-j}}\right] \\
&&+(-1)^{p}O_{2}\left[ v^{k+1-p}\int_{-a}^{a}e^{-\frac{\lambda }{v}%
(a-y)}(a-y)^{p}\frac{g(y,v)}{v^{k+1}}\,dy\right]
\end{eqnarray*}%
so there exists a constant $C^{\prime }>0$ such that 
\begin{equation*}
\left\Vert \frac{d^{p}h_{a}^{-}}{d\lambda ^{p}}\right\Vert
_{L^{1}(\left\vert v\right\vert ^{-k-1+p}dv)}\leq C^{\prime }\left\Vert
\left\vert v\right\vert ^{-(k+1-p)}O_{2}\left\vert v\right\vert
^{k+1-p}\right\Vert \left[ \sum_{j=0}^{p}\left\Vert \frac{d^{p-j}h_{-a}^{+}}{%
d\lambda ^{p-j}}\right\Vert _{L^{1}(\left\vert v\right\vert
^{-k-1+p-j}dv)}+\left\Vert \frac{g}{\left\vert v\right\vert ^{k+1}}%
\right\Vert \right] .
\end{equation*}%
This ends the proof.
\end{proof}

Finally, Proposition \ref{Proposition Estimate right flux} and Lemma \ref%
{Lemma Estimate left flux} imply:

\begin{corollary}
\label{Corollary Estimate right flux}Suppose that (\ref{Unif boudedness
derivaives})(\ref{Uniformly bounded derivatives suppl})(\ref{Hyp 1})(\ref%
{Hyp sur O2}) are satisfied. There exists a constant $C>0$ such that 
\begin{equation*}
\left\Vert \frac{d^{j}h_{a}^{-}}{d\lambda ^{j}}\right\Vert
_{L^{1}(\left\vert v\right\vert ^{-k-1+p}dv)}\leq C\left(
\sum_{l=0}^{j+1}\left\Vert (1-G_{is})^{-1}\right\Vert ^{l}\right) \left\Vert 
\frac{g}{\left\vert v\right\vert ^{k+1}}\right\Vert \ \ (0\leq j\leq k).
\end{equation*}
\end{corollary}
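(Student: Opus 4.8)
The plan is to obtain the estimate by straightforward substitution: Proposition \ref{Proposition Estimate right flux} controls the derivatives of the right flux $h_{a}^{-}$ (in weighted $L^{1}$-norms with the natural weight shift) by the corresponding derivatives of the left flux $h_{-a}^{+}$ plus a term in $g$, while Lemma \ref{Lemma Estimate left flux} controls exactly those left-flux quantities in terms of powers of $\left\Vert (1-G_{is})^{-1}\right\Vert$ and $\left\Vert g/\left\vert v\right\vert ^{k+1}\right\Vert$. Feeding the latter into the former and collecting finitely many terms should give the claim.

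Concretely, I would fix $j$ with $0\le j\le k$ and $s\in\mathbb{R}\setminus\{0\}$ — the range on which $(1-G_{is})^{-1}$ exists by Corollary \ref{Corollary prolomgement to imaginary axis}. For $j\ge 1$, start from the second inequality of Proposition \ref{Proposition Estimate right flux},
\begin{equation*}
\left\Vert \frac{d^{j}h_{a}^{-}}{d\lambda^{j}}\right\Vert_{L^{1}(\left\vert v\right\vert ^{-k-1+j}dv)}\le C\left[\sum_{i=0}^{j}\left\Vert \frac{d^{j-i}h_{-a}^{+}}{d\lambda^{j-i}}\right\Vert_{L^{1}(\left\vert v\right\vert ^{-k-1+(j-i)}dv)}+\left\Vert \frac{g}{\left\vert v\right\vert ^{k+1}}\right\Vert\right]
\end{equation*}
(for $j=0$ one uses instead the first inequality there), and then apply Lemma \ref{Lemma Estimate left flux} — with derivative order $j-i$, which stays in $\{0,\dots,k\}$ — to each of the $j+1$ summands on the right, bounding it by $C\bigl(\sum_{l=0}^{(j-i)+1}\left\Vert (1-G_{is})^{-1}\right\Vert^{l}\bigr)\left\Vert g/\left\vert v\right\vert ^{k+1}\right\Vert$.

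The only point requiring care — and the closest thing here to an obstacle, though it is minor — is to see that this composition does not inflate the polynomial degree in $\left\Vert (1-G_{is})^{-1}\right\Vert$ beyond $j+1$. That is immediate: since $i\ge 0$ we have $(j-i)+1\le j+1$, so each partial sum $\sum_{l=0}^{(j-i)+1}\left\Vert (1-G_{is})^{-1}\right\Vert^{l}$ is dominated by $\sum_{l=0}^{j+1}\left\Vert (1-G_{is})^{-1}\right\Vert^{l}$ (one merely adds further nonnegative terms, and no lower bound on $\left\Vert (1-G_{is})^{-1}\right\Vert$ is needed); there are at most $j+1\le k+1$ summands; and the extra additive term $\left\Vert g/\left\vert v\right\vert ^{k+1}\right\Vert$ from Proposition \ref{Proposition Estimate right flux} is absorbed into the $l=0$ contribution of the sum. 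Absorbing the resulting $k$-dependent numerical constants into the generic $C$ then delivers the stated bound. Conceptually the reason no new factor of $(1-G_{is})^{-1}$ appears in passing from $h_{a}^{-}$ to $h_{-a}^{+}$ is that the derivatives lost under $O_{2}$ are exactly offset by the compensating gain in weight exponent (the factor $\left\vert v\right\vert ^{k+1-p}$ pulled out inside $O_{2}$ in the proof of Proposition \ref{Proposition Estimate right flux}); all the genuinely substantive work lies upstream, in Lemma \ref{Lemma Operator Estimates} feeding Lemma \ref{Lemma Estimate left flux}.
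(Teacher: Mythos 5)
Your proof is correct and is exactly what the paper does: the paper offers no explicit argument for this corollary, stating only that it follows from Proposition \ref{Proposition Estimate right flux} combined with Lemma \ref{Lemma Estimate left flux}, and you have simply spelled out that substitution, correctly observing that the weight shift in the Proposition matches the weight in the Lemma for the $(j-i)$-th derivative, and that the partial sums $\sum_{l=0}^{(j-i)+1}\Vert(1-G_{is})^{-1}\Vert^{l}$ are dominated by $\sum_{l=0}^{j+1}\Vert(1-G_{is})^{-1}\Vert^{l}$ without any lower bound on the resolvent norm.
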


\begin{remark}
\label{Remark Non kernel assumptions}Note that (\ref{Hyp 1}) and (\ref{Hyp
sur O2}) are checkable since they are always satisfied by the specular parts
of the boundary operators $O_{i}\ (i=1,2)$ and checkable for the diffuse
parts.
\end{remark}

\section{\label{Section Resolvent imaginary axis}On the resolvent on the
imaginary axis}

Combining Lemma \ref{Lemma Operator Estimates}, Lemma \ref{Lemma Estimate
left flux}, Corollary \ref{Corollary Estimate right flux}, Corollary \ref%
{Corollary prolomgement to imaginary axis}, (\ref{Derivatives H lambda}) and
using the limit $F_{g}(s)$ defined by (\ref{Notation}) we get:

\begin{theorem}
\label{Theorem estimate resolvent}Let $k\in 
\mathbb{N}
$ and let (\ref{Unif boudedness derivaives})(\ref{Uniformly bounded
derivatives suppl})(\ref{Hyp 1})(\ref{Hyp sur O2}) be satisfied. \ Let%
\begin{equation*}
Z:=\left\{ \ g\in L^{1}(\Omega );\ \ \frac{g}{\left\vert v\right\vert ^{k+1}}%
\in L^{1}(\Omega )\right\}
\end{equation*}%
be endowed with the norm $\left\Vert g\right\Vert _{Z}=\left\Vert \frac{g}{%
\left\vert v\right\vert ^{k+1}}\right\Vert .$ For any $g\in Z,\ $ 
\begin{equation*}
\left\{ \lambda \in 
\mathbb{C}
;\ \func{Re}\lambda >0\right\} \ni \lambda \rightarrow (\lambda
-T_{O})^{-1}g\in L^{1}(\Omega )
\end{equation*}%
extends\ continuously to $i%
\mathbb{R}
\backslash \left\{ 0\right\} $ as a $C^{k}$ function $F_{g}(.)$ and there
exists a constant $C>0$ such that 
\begin{equation*}
\left\Vert \frac{d^{j}}{ds^{j}}F_{g}(s)\right\Vert \leq C\left(
\sum_{l=0}^{j+1}\left\Vert (1-G_{is})^{-1}\right\Vert ^{l}\right) \left\Vert
g\right\Vert _{Z}\ \ \left( 0\leq j\leq k,\ s\neq 0\right) .
\end{equation*}
\end{theorem}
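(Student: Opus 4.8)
The plan is to split $(\lambda-T_O)^{-1}g$ into a free part and a boundary part and to estimate each using the flux bounds already proved. For $\func{Re}\lambda>0$ the formulas (\ref{Resolvante positive})(\ref{Resolvante negative}) give $(\lambda-T_O)^{-1}g=H_\lambda g+B_\lambda g$, where $H_\lambda g$ is the resolvent of the free transport operator with zero incoming data and
\[
B_\lambda g=\chi_{\{v>0\}}\,\tfrac1v\,e^{-\frac{\lambda}{v}(x+a)}h_{-a}^+\;+\;\chi_{\{v<0\}}\,\tfrac{1}{|v|}\,e^{-\frac{\lambda}{|v|}(a-x)}h_a^-.
\]
For the free part nothing new is needed: (\ref{Derivatives H lambda}) already shows that $H_\lambda g$ extends to $\{\func{Re}\lambda\ge0\}$ as a $C^k$ function with $\|\partial_\lambda^j H_\lambda g\|\le(2a)^j\|g/|v|^{j+1}\|$, and since $|v|\le1$ on $\Omega$ we have $\|g/|v|^{j+1}\|\le\|g\|_Z$ for $0\le j\le k$; this accounts for the $l=0$ term in the claimed bound. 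So everything reduces to $B_\lambda g$, i.e. to the two boundary fluxes appearing in it.

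For $B_\lambda g$ I would differentiate term by term via the Leibniz rule, as in Section \ref{Section Object to estimate}. Since $\partial_\lambda^i\bigl(\tfrac1v e^{-\frac{\lambda}{v}(x+a)}\bigr)=\bigl(-\tfrac{x+a}{v}\bigr)^i\tfrac1v e^{-\frac{\lambda}{v}(x+a)}$ and $|x+a|\le 2a$, $|e^{-\frac{\lambda}{v}(x+a)}|\le1$ for $v>0$, $\func{Re}\lambda\ge0$, integrating first in $x$ over $(-a,a)$ bounds the $L^1(\Omega_+)$-norm of the $i$-th Leibniz term by $(2a)^{i+1}\bigl\|\tfrac{d^{j-i}h_{-a}^+}{d\lambda^{j-i}}\bigr\|_{L^1(|v|^{-(i+1)}dv)}$, whence
\[
\Bigl\|\partial_\lambda^j\bigl(\tfrac1v e^{-\frac{\lambda}{v}(x+a)}h_{-a}^+\bigr)\Bigr\|_{L^1(\Omega_+)}\;\le\;\sum_{i=0}^j\binom{j}{i}(2a)^{i+1}\,\Bigl\|\tfrac{d^{j-i}h_{-a}^+}{d\lambda^{j-i}}\Bigr\|_{L^1(|v|^{-(i+1)}dv)}.
\]
The key observation is that on $(0,1)$ one has $|v|^{-(i+1)}\le|v|^{-(k+1-(j-i))}$ precisely because $j\le k$, so each summand is dominated by $\bigl\|\tfrac{d^{j-i}h_{-a}^+}{d\lambda^{j-i}}\bigr\|_{L^1(|v|^{-k-1+(j-i)}dv)}$, which Lemma \ref{Lemma Estimate left flux} bounds by $C\bigl(\sum_{l=0}^{(j-i)+1}\|(1-G_{is})^{-1}\|^l\bigr)\|g\|_Z$; since $(j-i)+1\le j+1$, summing in $i$ gives the asserted estimate for this piece. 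The term involving $h_a^-$ is handled identically, with Corollary \ref{Corollary Estimate right flux} in place of Lemma \ref{Lemma Estimate left flux}.

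It remains to justify the continuous extension to $i\mathbb{R}\setminus\{0\}$ and the $C^k$ regularity, for which I would invoke Corollary \ref{Corollary prolomgement to imaginary axis}: $(1-G_\lambda)^{-1}$ extends strongly continuously to $i\mathbb{R}\setminus\{0\}$, and the differential equation (\ref{Differential Eq}) together with the uniformly bounded derivatives of $G_\lambda$ from Section \ref{Section operators estimates} propagates this to all of its $s$-derivatives up to order $k$. Hence, for fixed $g$, the fluxes $h_{-a}^+,h_a^-$ of (\ref{Flux positif})(\ref{Flux negatif}) and their $\lambda$-derivatives up to order $k$ extend to continuous functions of $s\in\mathbb{R}\setminus\{0\}$ in the relevant weighted spaces, and the Leibniz formulas above show $s\mapsto B_{is}g$ is $C^k$ into $L^1(\Omega)$; combined with the $C^k$ dependence of $H_{is}g$ this gives $F_g\in C^k(\mathbb{R}\setminus\{0\};L^1(\Omega))$, and letting $\func{Re}\lambda\to0_+$ in the bounds above (legitimate by that continuity) yields the stated inequality. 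I expect the only genuinely delicate point to be the weight bookkeeping in the middle display: one must verify that the factor $|v|^{-(i+1)}$ produced when $i$ of the $\lambda$-derivatives fall on the exponential is always absorbed by the weight $|v|^{-k-1+(j-i)}$ available for the $(j-i)$-th flux derivative — which is exactly what the restriction $j\le k$, i.e. the validity of the structural hypotheses at level $k$, guarantees.
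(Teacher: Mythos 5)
Your proof is correct and follows essentially the same route as the paper, which simply cites (\ref{Derivatives H lambda}), Lemma \ref{Lemma Estimate left flux}, Corollary \ref{Corollary Estimate right flux}, Lemma \ref{Lemma Operator Estimates} and Corollary \ref{Corollary prolomgement to imaginary axis} without writing out the bookkeeping. You have correctly identified and carried out the only nontrivial point, namely the weight-matching $|v|^{-(i+1)}\le|v|^{-(k+1-(j-i))}$ for $j\le k$ that lets the flux estimates in the weighted spaces absorb the factors produced by differentiating the exponential in $x$, and the monotonicity $\sum_{l=0}^{(j-i)+1}\le\sum_{l=0}^{j+1}$ that collapses the Leibniz sum to the stated bound.
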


\section{\label{Section existence on imaginary axis}Existence and estimates
of $(1-G_{\protect\lambda })^{-1}$}

The preceeding sections show that the existence and estimate of $%
(1-G_{\lambda })^{-1}$ for $\lambda =is\ (s\neq 0)$\ are the cornerstone of
this work. We start with a general result.

\begin{theorem}
\label{Theorem rayon spectral G lambda}If $\beta _{1}+\beta _{2}>0$\ then $%
r_{\sigma }(G_{\lambda })<1\ \ (\func{Re}\lambda \geq 0,\ \lambda \neq 0).$ 
\textbf{\ }
\end{theorem}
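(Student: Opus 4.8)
For $\func{Re}\lambda>0$ the claim is immediate: $\|G_\lambda\|\le e^{-4a\func{Re}\lambda}<1$ forces $r_\sigma(G_\lambda)\le\|G_\lambda\|<1$. So the only genuine content is the case $\lambda=is$ with $s\in\mathbb{R}\setminus\{0\}$, where $\|G_{is}\|\le1$ and the norm alone yields nothing. The plan is to compare, at the level of \emph{linear moduli} (see \cite{Chacon}), the square $G_{is}^{2}$ with $G_{0}^{2}$, and then to run a strict comparison argument for spectral radii of positive operators (\cite{MAREK}).

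First I would record the domination $|G_{is}^{2}|\le G_{0}^{2}$. Writing $M_s$ for multiplication by the unimodular function $e^{-2isa/|v|}$, we have $G_{is}=O_{1}M_{s}O_{2}M_{s}$. Since the $O_{i}$ are already positive, $|O_{i}|=O_{i}$; since $|M_{s}|=I$; and since $|AB|\le|A|\,|B|$ for linear moduli, we get $|G_{is}|\le O_{1}O_{2}=G_{0}$, hence $|G_{is}^{2}|\le|G_{is}|^{2}\le G_{0}^{2}$.

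Next, the strictness $|G_{is}^{2}|\ne G_{0}^{2}$, which is the heart of the matter. Expanding $O_{i}=\alpha_{i}R_{i}+\beta_{i}K_{i}$, both $G_{0}^{2}$ and $G_{is}^{2}$ split into the same finite family of terms, those of $G_{is}^{2}$ differing from the corresponding ones of $G_{0}^{2}$ only by interspersed unimodular multiplications $M_{s}$ (which can be pushed past the reflections via $R_{i}M_{s}=M_{s}R_{i}$ and collapsed via $R_{1}R_{2}=R_{2}R_{1}=I$). Since $\beta_{1}+\beta_{2}>0$, some $K_{i}$ occurs, and $G_{is}^{2}$ contains a term in which that $K_{i}$ appears twice; after this simplification such a term is an integral operator whose kernel, modulo extra unimodular factors, has the form $(v,u)\mapsto\int k(v,w)\,e^{i\phi(w)}\,\ell(w,u)\,dw$ with $k,\ell\ge0$ and $\phi(w)=-4sa/|w|$, whereas the matching term of $G_{0}^{2}$ has kernel $(v,u)\mapsto\int k(v,w)\,\ell(w,u)\,dw=:p_{0}(v,u)$. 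A quantitative triangle inequality gives $\big|\int k(v,w)e^{i\phi(w)}\ell(w,u)\,dw\big|<p_{0}(v,u)$ whenever $w\mapsto k(v,w)\ell(w,u)$ is positive on a set of positive measure, because $\phi$ — strictly monotone in $|w|$ since $s\ne0$ — is non-constant modulo $2\pi$ on every such set. Finally $p_{0}$ is the kernel of (a positive multiple of) a product of stochastic operators, hence integral preserving, so $p_{0}(v,u)>0$ — equivalently the positive-measure condition holds — on a set of $(v,u)$ of positive measure. Thus $|G_{is}^{2}|\le\sum(\text{moduli of the terms})<G_{0}^{2}$ on a set of positive measure, so $|G_{is}^{2}|\ne G_{0}^{2}$. (When $\beta_{1}\beta_{2}>0$ the same reasoning already applies to $G_{is}$ versus $G_{0}$; passing to squares handles all cases uniformly.)

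To conclude, I would invoke the strict comparison principle for spectral radii under domination. By \cite{MK-Rudnicki}, the hypothesis $\beta_{1}+\beta_{2}>0$ forces the essential spectral radius of the stochastic operator $G_{0}=O_{1}O_{2}$ to be strictly less than $1$, so $1=r_\sigma(G_{0})$ is an isolated eigenvalue of finite algebraic multiplicity, with nonnegative eigenfunction $h_{0}$ (recalled above) and strictly positive dual eigenfunction $\mathbf{1}$ (as $G_{0}$, being integral preserving, satisfies $G_{0}^{\ast}\mathbf{1}=\mathbf{1}$); the same holds for $G_{0}^{2}$. The comparison theorem of \cite{MAREK} then applies to $0\le|G_{is}^{2}|\le G_{0}^{2}$ with $|G_{is}^{2}|\ne G_{0}^{2}$ and yields $r_\sigma(|G_{is}^{2}|)<r_\sigma(G_{0}^{2})=1$; since $r_\sigma(G_{is})^{2}=r_\sigma(G_{is}^{2})\le r_\sigma(|G_{is}^{2}|)$, we obtain $r_\sigma(G_{is})<1$. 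The step I expect to be the main obstacle is the strictness above: one has to organise the composition of specular reflections, oscillatory multiplications and kernel operators carefully enough to isolate a term in which the oscillation $e^{-2isa/|v|}$ genuinely destroys mass in the linear modulus, and to check that the comparison kernel $p_{0}$ is nondegenerate; a minor but essential second point is verifying that the hypotheses of \cite{MAREK} are met, which is precisely where the bound on the essential spectral radius of $G_{0}$ and the stochasticity of $G_{0}$ are used.
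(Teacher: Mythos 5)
Your proof is correct and in fact more careful than the paper's at the one delicate step. The paper argues directly with $|G_{i\alpha}|$ itself: it asserts $|G_{i\alpha}|\le G_0$ (same as your first step) and then claims, without computation, that $|G_{i\alpha}|\ne G_0$ for $\alpha\ne 0$, from which \cite{MAREK} gives $r_\sigma(|G_{i\alpha}|)<1$. The inequality-plus-strictness-plus-Marek scaffolding is identical to yours; the difference is that you pass to $G_{is}^2$ before invoking strictness, and this difference matters. Indeed, the paper's claim $|G_{i\alpha}|\ne G_0$ is actually \emph{false} when $\beta_1\beta_2=0$ (say $\beta_1=0$, $\beta_2>0$): then
\begin{equation*}
G_{is}=\alpha_2\,R_1 M_s R_2 M_s+\beta_2\,R_1 M_s K_2 M_s=\alpha_2\,(\text{unimodular multiplication})+\beta_2\,(\text{kernel with kernel of modulus }k_2(-v,v')),
\end{equation*}
so the Chacon--Krengel linear modulus is exactly $\alpha_2 I+\beta_2 R_1K_2=G_0$, and there is no strict domination at the level of $G_{is}$ versus $G_0$. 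Your move to $G_{is}^2$ versus $G_0^2$ is precisely what is needed: there the term where $K_2$ occurs twice with an oscillatory factor $e^{-4isa/|u|}$ in between produces a genuine loss through the strict triangle inequality, and $|G_{is}^2|\ne G_0^2$ really does hold under the bare hypothesis $\beta_1+\beta_2>0$. The paper itself acknowledges this dichotomy in Theorem \ref{Theorem estimate resolvent G lamda}, where it treats $\|G_\lambda\|$ when $\beta_1\beta_2>0$ but switches to $\|G_\lambda^2\|$ when $\beta_1>0,\ \beta_2=0$; your proof simply imports the same device into the spectral-radius argument, which makes the statement correct in the generality claimed. Two small remarks: (a) the closing sentence $r_\sigma(G_{is})^2=r_\sigma(G_{is}^2)\le r_\sigma(|G_{is}^2|)$ is fine (spectral mapping plus the domination inequality for spectral radii), and (b) both your proof and the paper's implicitly rely on the fact that the applicable version of Marek's comparison theorem holds for $G_0^2$ with only the Perron data recalled ($r_{ess}<1$, stochasticity, the nonnegative eigenfunction $h_0$), which is not a new issue you introduce but is worth keeping in mind when spelling the argument out in full.
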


\begin{proof}
We have $G_{\lambda }=O_{1}e^{-\frac{2\lambda a}{\left\vert v\right\vert }%
}O_{2}e^{-\frac{2\lambda a}{\left\vert v\right\vert }}$ and $%
G_{0}=O_{1}O_{2}.$ Note that $O_{1}O_{2}$ is stochastic so $r_{\sigma
}(G_{0})=1.$ Accordng to \cite{MK-Rudnicki}, $r_{ess}(G_{0})<1$ if $\beta
_{1}+\beta _{2}>0\ $so $r_{\sigma }(G_{0})$ is an isolated eigenvalue of $%
G_{0}$ with finite algebraic multiplicity. We know that%
\begin{equation*}
\left\Vert G_{\lambda }\right\Vert =\left\Vert O_{1}e^{-\frac{2\lambda a}{%
\left\vert v\right\vert }}O_{2}e^{-\frac{2\lambda a}{\left\vert v\right\vert 
}}\right\Vert \leq e^{-4a\func{Re}\lambda }\ <1\text{ if }\func{Re}\lambda >0
\end{equation*}%
since $\left\vert e^{-\frac{2\lambda a}{\left\vert v\right\vert }%
}\right\vert \leq e^{-2a\func{Re}\lambda }.$ Let $\lambda =i\alpha \ \
(\alpha \in 
\mathbb{R}
).$ Note that the (operator) \textit{modulus} $\left\vert G_{\lambda
}\right\vert $ of $G_{\lambda }$ (see \cite{Chacon}) is such that 
\begin{equation*}
\left\vert O_{1}e^{-\frac{2\lambda a}{\left\vert v\right\vert }}O_{2}e^{-%
\frac{2\lambda a}{\left\vert v\right\vert }}\right\vert \leq O_{1}O_{2}=G_{0}
\end{equation*}%
and 
\begin{equation*}
\left\vert O_{1}e^{-\frac{2\lambda a}{\left\vert v\right\vert }}O_{2}e^{-%
\frac{2\lambda a}{\left\vert v\right\vert }}\right\vert \neq ~G_{0}\ \
(\alpha \neq 0)
\end{equation*}%
so by \cite{MAREK}%
\begin{equation*}
r_{\sigma }(\left\vert O_{1}e^{-\frac{2\lambda a}{\left\vert v\right\vert }%
}O_{2}e^{-\frac{2\lambda a}{\left\vert v\right\vert }}\right\vert
)<r_{\sigma }(G_{0})=1
\end{equation*}%
whence%
\begin{equation*}
r_{\sigma }(O_{1}e^{-\frac{2\lambda a}{\left\vert v\right\vert }}O_{2}e^{-%
\frac{2\lambda a}{\left\vert v\right\vert }})<1\ \ (\alpha \neq 0)
\end{equation*}%
and $r_{\sigma }(G_{\lambda })<1\ \ (\func{Re}\lambda \geq 0,\ \lambda \neq
0).$
\end{proof}

\begin{remark}
\label{Remark rayon spectral G lambda tilde}We can show similarly that $%
r_{\sigma }(\widetilde{G}_{\lambda })<1\ \ (\func{Re}\lambda \geq 0,\
\lambda \neq 0)$ where $\widetilde{G}_{\lambda }:=O_{2}e^{-\frac{\lambda }{v}%
2a}O_{1}e^{-\frac{\lambda }{\left\vert v\right\vert }2a}.$
\end{remark}

We complement now Theorem \ref{Theorem rayon spectral G lambda} in different
directions by adding suitable assumptions.

\begin{theorem}
\label{Theorem estimate resolvent G lamda}Let $K_{i}\ (i=1,2)$ be compact
and let $\beta _{1}+\beta _{2}>0$\textit{. Then: }

(i) If $\beta _{1}>0$, $\beta _{2}>0$ and, for almost all $v^{\prime \prime
},$ $k_{1}(v^{\prime \prime },.)\in L^{\infty }(-1,0)\ $then $c_{\eta
}:=\sup_{\left\vert \lambda \right\vert \geq \eta }\left\Vert G_{\lambda
}\right\Vert <1\ \left( \eta >0\right) .\ $If the kernels $k_{i}(.,.)\ $of $%
K_{i}\ (i=1,2)$ are\textit{\ continuous and} $K_{1}\left\vert v\right\vert
^{-2}K_{2}$ is bounded then there exists $\widehat{c}>0$ such that 
\begin{equation*}
\left\Vert G_{\lambda }\right\Vert \leq 1-\widehat{c}\left\vert \func{Im}%
\lambda \right\vert ^{2}\ (\lambda \rightarrow 0).\ 
\end{equation*}

(ii) If $\beta _{1}>0$,$\ \beta _{2}=0$ and, for almost all $v^{\prime
\prime },$ $k_{1}(v^{\prime \prime },.)\in L^{\infty }(-1,0)$ then $c_{\eta
}:=\sup_{\left\vert \lambda \right\vert \geq \eta }\left\Vert G_{\lambda
}^{2}\right\Vert <1\ \left( \eta >0\right) $. If the kernel $k_{1}(.,.)\ $of 
$K_{1}$ is\textit{\ continuous }and $K_{1}\left\vert v\right\vert
^{-2}R_{2}K_{1}$ is bounded then there exists $\widehat{c}>0$ such that%
\begin{equation*}
\left\Vert G_{\lambda }^{2}\right\Vert \leq 1-\widehat{c}\left\vert \func{Im}%
\lambda \right\vert ^{2}\ (\lambda \rightarrow 0).
\end{equation*}%
(A similar statement holds if $\beta _{1}=0$ and $\beta _{2}>0).$

(iii) In particular, in both cases (i) and (ii) we have 
\begin{equation*}
\sup_{\left\vert \lambda \right\vert \geq \eta }\left\Vert (1-G_{\lambda
})^{-1}\right\Vert <+\infty \ \ \left( \eta >0\right) \text{ and }\left\Vert
(1-G_{\lambda })^{-1}\right\Vert =O(\left\vert \func{Im}\lambda \right\vert
^{-2})\ \ (\lambda \rightarrow 0).
\end{equation*}
\end{theorem}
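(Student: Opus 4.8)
The plan is to reduce the whole statement to the two norm estimates on $G_{\lambda}$ (resp.\ $G_{\lambda}^{2}$) asserted in (i) and (ii), and to read off (iii) from them by elementary Neumann‑series manipulations. Indeed, if $\sup_{|\lambda|\geq\eta}\|G_{\lambda}\|=c_{\eta}<1$ then $(1-G_{\lambda})^{-1}=\sum_{n\geq0}G_{\lambda}^{n}$ converges with $\|(1-G_{\lambda})^{-1}\|\leq(1-c_{\eta})^{-1}$, and $\|G_{\lambda}\|\leq1-\widehat{c}\,|\func{Im}\lambda|^{2}$ (for $\lambda\to0$, $\func{Re}\lambda\geq0$) gives at once $\|(1-G_{\lambda})^{-1}\|\leq(\widehat{c}\,|\func{Im}\lambda|^{2})^{-1}=O(|\func{Im}\lambda|^{-2})$. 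In case (ii) one uses instead the identity $(1-G_{\lambda})^{-1}=(1+G_{\lambda})(1-G_{\lambda}^{2})^{-1}$ together with $\|1+G_{\lambda}\|\leq2$ on $\{\func{Re}\lambda\geq0\}$ (recall $\|G_{\lambda}\|\leq e^{-4a\func{Re}\lambda}\leq1$ there) and the Neumann‑series bound for $(1-G_{\lambda}^{2})^{-1}$. So the real content is (i) and (ii), and each splits into a \emph{uniform bound away from the origin} ($c_{\eta}<1$) and a \emph{quantitative bound near the origin} ($1-\widehat{c}\,|\func{Im}\lambda|^{2}$); the reason (ii) is phrased in terms of $G_{\lambda}^{2}$ is that when $\beta_{2}=0$ the operator $O_{2}=R_{2}$ produces no mixing, so one must exploit the two copies of the diffuse operator $K_{1}$ that appear in $G_{\lambda}^{2}$.

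For the uniform bound I would partition $\{\func{Re}\lambda\geq0,\ |\lambda|\geq\eta\}$. Where $\func{Re}\lambda\geq\delta>0$ the estimate $\|G_{\lambda}\|\leq e^{-4a\func{Re}\lambda}\leq e^{-4a\delta}$ suffices. For $\lambda$ close to the imaginary axis with $|\func{Im}\lambda|$ large, the idea is Riemann--Lebesgue: the multiplier $e^{-2\lambda a/|v|}$ oscillates rapidly in $v$, and composed with the diffuse operators $K_{i}$ --- which are compact and, by hypothesis, have kernels bounded in the relevant variable --- the diffuse contributions to $\|G_{\lambda}\|$ (resp.\ to $\|G_{\lambda}^{2}\|$) tend to $0$; hence $\limsup_{|\func{Im}\lambda|\to\infty}\|G_{\lambda}\|\leq\alpha_{1}\alpha_{2}<1$ in case (i) (using $\beta_{1},\beta_{2}>0$), and the analogous statement for $\|G_{\lambda}^{2}\|$ in case (ii). Compactness of $K_{i}$ is what upgrades pointwise‑in‑$\psi$ decay to operator‑norm decay. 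There remains a compact range of $|\func{Im}\lambda|$, bounded away from $0$ and $\infty$, with $\func{Re}\lambda$ small: there one first checks $\|G_{is}\|<1$ pointwise --- a cancellation argument exploiting that $v\mapsto e^{-2isa/|v|}$ is non‑constant, so that the positivity‑preserving maps $O_{1},O_{2}$ strictly contract in $L^{1}$ the oscillating functions fed to them --- and then passes to a uniform estimate by a compactness/upper‑semicontinuity argument. Some care is needed here because $\lambda\mapsto G_{\lambda}$ fails to be norm‑continuous up to the imaginary axis (the multiplier $e^{-2\lambda a/|v|}$ is not norm‑continuous there); this is handled by isolating the specular--specular part $\alpha_{1}\alpha_{2}e^{-4\lambda a/|v|}R_{1}R_{2}$, whose norm \emph{is} continuous in $\lambda$, and applying compactness of the $K_{i}$ (together with the $L^{\infty}$‑kernel hypothesis) to the remaining diffuse pieces.

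For the quantitative bound near the origin, write $\lambda=\varepsilon+is$ with $\varepsilon\geq0$ and $s$ small. If $\varepsilon\gtrsim s^{2}$ then $\|G_{\lambda}\|\leq e^{-4a\varepsilon}\leq1-\widehat{c}\,s^{2}$ directly, so the substantial case is $\varepsilon=o(s^{2})$, where one must show $\|G_{is}\|\leq1-\widehat{c}\,s^{2}$ uniformly in the small $\varepsilon$; this is the promised second‑order expansion about $s=0$. One expands $e^{-2\lambda a/|v|}=1-\tfrac{2\lambda a}{|v|}+\tfrac{2\lambda^{2}a^{2}}{|v|^{2}}-\cdots$; since $1/|v|$ is unbounded this expansion is used only on $\{|v|\gtrsim|\lambda|\}$, the complementary region $\{|v|\lesssim|\lambda|\}$ being absorbed by crude modulus/damping bounds. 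Substituting into $G_{\lambda}=O_{1}e^{-2\lambda a/|v|}O_{2}e^{-2\lambda a/|v|}$ and estimating the operator norm --- conveniently via the dual action on $L^{\infty}$, where $O_{1}^{\ast},O_{2}^{\ast}$ are Markov operators --- one finds that the zeroth‑order term contributes exactly $1$ ($\|G_{0}\|=1$) while the genuine oscillation at second order yields a strictly negative contribution $-\widehat{c}\,|\func{Im}\lambda|^{2}$. The boundedness of $K_{1}|v|^{-2}K_{2}$ (resp.\ of $K_{1}|v|^{-2}R_{2}K_{1}$ in case (ii)) is precisely what makes the second‑order term, which carries a factor $\lambda^{2}/|v|^{2}$, a bounded operator uniformly near the tangential velocities, and continuity of the kernels $k_{i}$ is what supplies the \emph{strict} negativity, ruling out the would‑be extremal configuration in which the oscillating factor is exactly undone along a fibre of a diffuse kernel --- something a continuous kernel cannot permit.

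I expect the quantitative near‑origin estimate to be the main obstacle, for two intertwined reasons: the weight $1/|v|$ in the phase multiplier blows up at $v=0$, so the whole analysis must be split at the scale $|v|\sim|\lambda|$ and then reassembled with uniform constants --- which is exactly where the structural hypotheses on the kernels at $v=0$ are forced --- and one must extract a \emph{strict} (not merely non‑positive) second‑order gain, a genuine non‑degeneracy statement rather than a soft estimate. By comparison, the uniform bound, while technical (the loss of norm‑continuity of $\lambda\mapsto G_{\lambda}$ at the imaginary axis being the delicate point), follows the fairly standard pattern of Riemann--Lebesgue combined with compactness of the diffuse operators.
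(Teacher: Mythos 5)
The overall roadmap you propose --- reduce (iii) to (i) and (ii) by Neumann series and the identity $(1-G_{\lambda})^{-1}=(1+G_{\lambda})(1-G_{\lambda}^{2})^{-1}$; obtain the uniform bound away from the origin by combining the trivial bound $\|G_\lambda\|\le e^{-4a\func{Re}\lambda}$ with Riemann--Lebesgue and compactness of the $K_i$; and obtain a quadratic gain near the origin from a second-order cancellation --- matches the structure of the paper's proof. The decomposition $\|G_\lambda\|\le 1-\beta_1\beta_2\bigl(1-\|K_1 e^{-2\lambda a/|v|}K_2\|\bigr)$ (and the corresponding one for $G_\lambda^2$ with $K_1 e^{-2\lambda a/|v|}R_2e^{-2\lambda a/|v|}K_1$) is also the paper's starting point, even if you don't write it out.

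However, the near-origin argument, which is the real content, is not actually established in your sketch, and the mechanism you invoke is not the right one. You propose Taylor-expanding the multiplier $e^{-2\lambda a/|v|}$ in $\lambda$, splitting the $v$-integral at $|v|\sim|\lambda|$, and asserting that the second-order term gives a \emph{strictly} negative contribution $-\widehat{c}\,|\func{Im}\lambda|^2$, with strictness attributed to continuity of the kernels. This is not carried out and, as stated, does not supply strictness: continuity alone does not rule out the degenerate configuration you worry about. The paper avoids the $|v|\sim|\lambda|$ split altogether. It fixes $(v',v'')$ and studies the scalar function $s\mapsto\sqrt{u_\varepsilon(s,v',v'')}=\bigl|\int_{-1}^0 k_1(v'',v)e^{-2isa/|v|}\widehat{k}_2(v,v')\,dv\bigr|$; the hypothesis that $K_1|v|^{-2}K_2$ is bounded is exactly what makes this function $C^2$ near $s=0$ with a well-controlled second derivative. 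One then checks $\partial_s\sqrt{u}\big|_{s=0}=0$ and, crucially, that $-\partial_s^2\sqrt{u}\big|_{s=0}$ is proportional to
\begin{equation*}
\Bigl(\int k_1\widehat{k}_2\,dv\Bigr)\Bigl(\int \tfrac{k_1}{|v|^2}\widehat{k}_2\,dv\Bigr)-\Bigl(\int\tfrac{k_1}{|v|}\widehat{k}_2\,dv\Bigr)^{2}>0,
\end{equation*}
the strict positivity being the \emph{strict Cauchy--Schwarz inequality} for the measure $k_1\widehat{k}_2\,dv$ and the non-proportional functions $1$ and $1/|v|$. That is the genuine non-degeneracy mechanism you are missing. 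Continuity of $k_1,k_2$ enters only afterwards, to make this positive second derivative bounded away from zero \emph{uniformly} in $(v',v'',\varepsilon)$, which is what lets you integrate in $v''$ and take the sup in $v'$ and $\varepsilon$ to obtain a single constant $\widehat{c}>0$. Without identifying the Cauchy--Schwarz step, and without controlling the Taylor remainder of $\sqrt{u}$ uniformly, the claimed quadratic gain remains unproven. Your reduction to the regime $\varepsilon=o(s^2)$ is fine but is not needed in the paper's argument, which works uniformly on the strip $\varepsilon\in[0,1]$ from the start.

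A second, smaller imprecision: in the uniform bound on a compact annulus $c\le|s|\le c^{-1}$ (with $\varepsilon\in[0,1]$), you speak of an ``upper-semicontinuity'' argument, but $\lambda\mapsto\|G_\lambda\|$ is not obviously upper semicontinuous on the closed half-plane. The paper instead argues by contradiction from a maximizing sequence, using weak compactness of $\{k_2(\cdot,v')\}$ (a Dunford--Pettis argument together with the $L^\infty$-in-$v$ hypothesis on $k_1$) to pass to the limit and then derives a contradiction from the equality case of the triangle inequality. Your sketch should be replaced by this compactness/contradiction scheme to make the uniform bound rigorous.
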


\begin{proof}
Note that $\left\Vert G_{\lambda }\right\Vert \leq e^{-4a\func{Re}\lambda }\
(\func{Re}\lambda \geq 0)$ so we may restrict ourselves to the strip $%
\left\{ \lambda ;\ 0\leq \func{Re}\lambda \leq 1\right\} .\ $Let $\lambda
=\varepsilon +is,\ \ \varepsilon \in \left[ 0,1\right] .$ Without loss of
generality, we may restrict ourselves to the case $\beta _{1}>0.$ This case
subdivides into two subcases:%
\begin{equation}
\beta _{1}>0\text{ and }\beta _{2}>0  \label{Cas 1}
\end{equation}%
or%
\begin{equation}
\beta _{1}>0\text{ and }\beta _{2}=0.  \label{Cas 2}
\end{equation}%
Consider first the case (\ref{Cas 1}).%
\begin{eqnarray*}
G_{\lambda } &=&O_{1}e^{-\frac{2\lambda a}{\left\vert v\right\vert }%
}O_{2}e^{-\frac{2\lambda a}{\left\vert v\right\vert }}=\left( \alpha
_{1}R_{1}+\beta _{1}K_{1}\right) e^{-\frac{2\lambda a}{\left\vert
v\right\vert }}\left( \alpha _{2}R_{2}+\beta _{2}K_{2}\right) e^{-\frac{%
2\lambda a}{\left\vert v\right\vert }} \\
&=&\beta _{1}\beta _{2}K_{1}e^{-\frac{2\lambda a}{\left\vert v\right\vert }%
}K_{2}e^{-\frac{2\lambda a}{\left\vert v\right\vert }}+H_{\lambda }\ 
\end{eqnarray*}%
where%
\begin{equation*}
H_{\lambda }\ =\alpha _{1}\alpha _{2}R_{1}e^{-\frac{2\lambda a}{\left\vert
v\right\vert }}R_{2}e^{-\frac{2\lambda a}{\left\vert v\right\vert }}+\alpha
_{1}\beta _{2}R_{1}e^{-\frac{2\lambda a}{\left\vert v\right\vert }}K_{2}e^{-%
\frac{2\lambda a}{\left\vert v\right\vert }}+\beta _{1}\alpha _{2}K_{1}e^{-%
\frac{2\lambda a}{\left\vert v\right\vert }}R_{2}e^{-\frac{2\lambda a}{%
\left\vert v\right\vert }}.
\end{equation*}%
We have%
\begin{eqnarray*}
\left\Vert G_{\lambda }\right\Vert &\leq &\beta _{1}\beta _{2}\left\Vert
K_{1}e^{-\frac{2\lambda a}{\left\vert v\right\vert }}K_{2}\right\Vert
+\alpha _{1}\alpha _{2}+\alpha _{1}\beta _{2}+\beta _{1}\alpha _{2} \\
&=&\beta _{1}\beta _{2}\left\Vert K_{1}e^{-\frac{2\lambda a}{\left\vert
v\right\vert }}K_{2}\right\Vert +\left( 1-\beta _{1}\right) \left( 1-\beta
_{2}\right) +\left( 1-\beta _{1}\right) \beta _{2}+\beta _{1}\left( 1-\beta
_{2}\right) \\
&=&\beta _{1}\beta _{2}\left\Vert K_{1}e^{-\frac{2\lambda a}{\left\vert
v\right\vert }}K_{2}\right\Vert +1-\beta _{1}\beta _{2}
\end{eqnarray*}%
so%
\begin{equation}
\left\Vert G_{\lambda }\right\Vert \leq 1-\beta _{1}\beta _{2}\left(
1-\left\Vert K_{1}e^{-\frac{2\lambda a}{\left\vert v\right\vert }%
}K_{2}\right\Vert \right) .  \label{Estimate G lambda}
\end{equation}%
Note that $K_{1}e^{-\frac{2\lambda a}{\left\vert v\right\vert }%
}K_{2}=K_{1}e^{-\frac{2isa}{\left\vert v\right\vert }}\widehat{K}_{2}$ where 
$\widehat{K}_{2}$ has the kernel 
\begin{equation*}
\widehat{k}_{2}(v,v^{\prime })=e^{-\frac{2\varepsilon a}{\left\vert
v\right\vert }}k_{2}(v,v^{\prime })\leq k_{2}(v,v^{\prime }).
\end{equation*}%
We have \ 
\begin{equation*}
\int_{-1}^{0}k_{1}(v,v^{\prime })dv=1,\ \int_{0}^{1}\widehat{k}%
_{2}(v,v^{\prime })dv\leq \int_{0}^{1}k_{2}(v,v^{\prime })dv=1.
\end{equation*}%
Since \ $\ \ $%
\begin{eqnarray}
K_{1}e^{-\frac{2isa}{\left\vert v\right\vert }}\widehat{K}_{2}f
&=&\int_{-1}^{0}dvk_{1}(v^{\prime \prime },v)e^{-\frac{2isa}{\left\vert
v\right\vert }}\int_{0}^{1}\widehat{k}_{2}(v,v^{\prime })f(v^{\prime
})dv^{\prime }  \label{K1 K2} \\
&=&\int_{0}^{1}\left[ \int_{-1}^{0}k_{1}(v^{\prime \prime },v)e^{-\frac{2isa%
}{\left\vert v\right\vert }}\widehat{k}_{2}(v,v^{\prime })dv\right]
f(v^{\prime })dv^{\prime }  \notag
\end{eqnarray}%
then%
\begin{equation*}
\left\Vert K_{1}e^{-\frac{2isa}{\left\vert v\right\vert }}\widehat{K}%
_{2}\right\Vert \leq \sup_{v^{\prime }\in \left( 0,1\right)
}\int_{0}^{1}\left\vert \int_{-1}^{0}k_{1}(v^{\prime \prime },v)e^{-\frac{%
2isa}{\left\vert v\right\vert }}\widehat{k}_{2}(v,v^{\prime })dv\right\vert
dv^{\prime \prime }.
\end{equation*}%
We recall (see \cite{Rudin}, Thm 1.39, p. 30) that for any complex function $%
h\in L^{1}(\mu ),$%
\begin{equation*}
\left\vert \int hd\mu \right\vert =\int \left\vert h\right\vert d\mu
\end{equation*}%
if and only if there exists a constant $\alpha $ such that $\alpha
h=\left\vert h\right\vert .\ $It follows that%
\begin{equation*}
\left\vert \int_{-1}^{0}k_{1}(v^{\prime \prime },v)e^{-\frac{2isa}{%
\left\vert v\right\vert }}\widehat{k}_{2}(v,v^{\prime })\ dv\right\vert
<\int_{-1}^{0}k_{1}(v^{\prime \prime },v)\widehat{k}_{2}(v,v^{\prime })\ dv
\end{equation*}%
otherwise there exists \ a constant $\alpha $ such that 
\begin{equation*}
\alpha k_{1}(v^{\prime \prime },v)e^{-\frac{2isa}{\left\vert v\right\vert }}%
\widehat{k}_{2}(v,v^{\prime })\ =k_{1}(v^{\prime \prime },v)\widehat{k}%
_{2}(v,v^{\prime })
\end{equation*}%
so $\alpha e^{-\frac{2isa}{\left\vert v\right\vert }}=1$ and $\alpha =e^{%
\frac{2isa}{\left\vert v\right\vert }}$ is \textit{not} a constant. Thus,
for $\func{Re}\lambda \geq 0$ and $\lambda \neq 0,$%
\begin{eqnarray*}
\int_{0}^{1}\left\vert \int_{-1}^{0}k_{1}(v^{\prime \prime },v)e^{-\frac{2isa%
}{\left\vert v\right\vert }}\widehat{k}_{2}(v,v^{\prime })\ dv\right\vert
dv^{\prime \prime } &<&\int_{0}^{1}\left( \int_{-1}^{0}k_{1}(v^{\prime
\prime },v)\widehat{k}_{2}(v,v^{\prime })\ dv\right) dv^{\prime \prime } \\
&\leq &\int_{0}^{1}\left( \int_{-1}^{0}k_{1}(v^{\prime \prime
},v)k_{2}(v,v^{\prime })\ dv\right) dv^{\prime \prime }=1.
\end{eqnarray*}%
Let us show that for any constant $c>0$ 
\begin{equation}
\sup_{c\leq \left\vert s\right\vert \leq c^{-1}}\sup_{\varepsilon \in \left[
0,1\right] }\sup_{v^{\prime }}\int_{0}^{1}\left\vert
\int_{-1}^{0}k_{1}(v^{\prime \prime },v)e^{-\frac{2isa}{\left\vert
v\right\vert }}\widehat{k}_{2}(v,v^{\prime })\ dv\right\vert dv^{\prime
\prime }<1.  \label{Strictly less 1}
\end{equation}%
Let us argue by contradiction by supposing that this supremum is equal to $%
1.\ $Note that $\widehat{k}_{2}(v,v^{\prime })=e^{-\frac{2\varepsilon a}{%
\left\vert v\right\vert }}k_{2}(v,v^{\prime })$ and $K_{2}$ is weakly
compact, i.e. 
\begin{equation*}
\left\{ k_{2}(.,v^{\prime });\ v^{\prime }\in \left( 0,1\right) \right\}
\end{equation*}%
is a relatively \textit{weakly compact} subset of the unit sphere of $%
L^{1}(-1,0)$ (at this stage we do not need the compactness of $K_{2}$).\
There exist $\varepsilon _{j}\rightarrow \varepsilon $, $v_{j}^{\prime
}\rightarrow \omega $, $s_{j}\rightarrow s\in \left[ c,c^{-1}\right] $ and $%
g\in L^{1}(-1,0)$ such that 
\begin{equation}
k_{2}(.,v_{j}^{\prime })\rightarrow g\text{ weakly in }L^{1}(-1,0).
\label{Weak convergence}
\end{equation}%
and%
\begin{eqnarray*}
&&\int_{0}^{1}\left\vert \int_{-1}^{0}k_{1}(v^{\prime \prime },v)e^{-\frac{%
2is_{j}a}{\left\vert v\right\vert }}e^{-\frac{2\varepsilon _{j}a}{\left\vert
v\right\vert }}k_{2}(v,v_{j}^{\prime })\ dv\right\vert dv^{\prime \prime } \\
&\rightarrow &\sup_{c\leq \left\vert s\right\vert \leq
c^{-1}}\sup_{\varepsilon \in \left[ 0,1\right] }\sup_{v^{\prime
}}\int_{0}^{1}\left\vert \int_{-1}^{0}k_{1}(v^{\prime \prime },v)e^{-\frac{%
2isa}{\left\vert v\right\vert }}\widehat{k}_{2}(v,v^{\prime })\
dv\right\vert dv^{\prime \prime }=1.
\end{eqnarray*}%
Since the sequence $\left\{ k_{2}(.,v_{j}^{\prime })\right\} _{j}$ is
equiintegrable and since, for almost all $v^{\prime \prime },$ $%
k_{1}(v^{\prime \prime },.)\in L^{\infty }(-1,0)$ we have 
\begin{equation*}
\int_{\tau }^{0}k_{1}(v^{\prime \prime },v)e^{-\frac{2is_{j}a}{\left\vert
v\right\vert }}e^{-\frac{2\varepsilon _{j}a}{\left\vert v\right\vert }%
}k_{2}(v,v_{j}^{\prime })\ dv\rightarrow 0\ \ (\tau \rightarrow 0_{-})
\end{equation*}%
\textit{uniformly} in $j$ and (\ref{Weak convergence}) implies 
\begin{equation*}
\int_{-1}^{0}k_{1}(v^{\prime \prime },v)e^{-\frac{2is_{j}a}{\left\vert
v\right\vert }}e^{-\frac{2\varepsilon _{j}a}{\left\vert v\right\vert }%
}k_{2}(v,v_{j}^{\prime })\ dv\rightarrow \int_{-1}^{0}k_{1}(v^{\prime \prime
},v)e^{-\frac{2isa}{\left\vert v\right\vert }}e^{-\frac{2\varepsilon a}{%
\left\vert v\right\vert }}g(v)\ dv
\end{equation*}%
\begin{equation*}
\left\vert \int_{-1}^{0}k_{1}(v^{\prime \prime },v)e^{-\frac{2is_{j}a}{%
\left\vert v\right\vert }}e^{-\frac{2\varepsilon _{j}a}{\left\vert
v\right\vert }}k_{2}(v,v_{j}^{\prime })\ dv\right\vert \rightarrow
\left\vert \int_{-1}^{0}k_{1}(v^{\prime \prime },v)e^{-\frac{2isa}{%
\left\vert v\right\vert }}e^{-\frac{2\varepsilon a}{\left\vert v\right\vert }%
}g(v)\ dv\right\vert
\end{equation*}%
and similarly%
\begin{equation*}
\int_{-1}^{0}k_{1}(v^{\prime \prime },v)e^{-\frac{2\varepsilon _{j}a}{%
\left\vert v\right\vert }}k_{2}(v,v_{j}^{\prime })\ dv\rightarrow
\int_{-1}^{0}k_{1}(v^{\prime \prime },v)e^{-\frac{2\varepsilon a}{\left\vert
v\right\vert }}g(v)\ dv.
\end{equation*}%
Since%
\begin{equation*}
\int_{0}^{1}\left\vert \int_{-1}^{0}k_{1}(v^{\prime \prime },v)e^{-\frac{%
2is_{j}a}{\left\vert v\right\vert }}e^{-\frac{2\varepsilon _{j}a}{\left\vert
v\right\vert }}k_{2}(v,v_{j}^{\prime })\ dv\right\vert dv^{\prime \prime
}\leq \int_{0}^{1}\left( \int_{-1}^{0}k_{1}(v^{\prime \prime },v)e^{-\frac{%
2\varepsilon _{j}a}{\left\vert v\right\vert }}k_{2}(v,v_{j}^{\prime })\
dv\right) dv^{\prime \prime }\leq 1
\end{equation*}%
then%
\begin{equation*}
\left( \int_{-1}^{0}\left( \int_{0}^{1}k_{1}(v^{\prime \prime },v)dv^{\prime
\prime }\right) e^{-\frac{2\varepsilon _{j}a}{\left\vert v\right\vert }%
}k_{2}(v,v_{j}^{\prime })\ dv\right) \rightarrow 1.
\end{equation*}%
This last limit shows that $\varepsilon _{j}\rightarrow 0$ and%
\begin{equation*}
\int_{-1}^{0}\left( \int_{0}^{1}k_{1}(v^{\prime \prime },v)dv^{\prime \prime
}\right) g(v)\ dv=1
\end{equation*}%
or indeed%
\begin{equation*}
\int_{0}^{1}\left( \int_{-1}^{0}k_{1}(v^{\prime \prime },v)g(v)\ dv\right)
dv^{\prime \prime }=1.
\end{equation*}%
Hence%
\begin{equation*}
\int_{0}^{1}\left\vert \int_{-1}^{0}k_{1}(v^{\prime \prime },v)e^{-\frac{2isa%
}{\left\vert v\right\vert }}g(v)\ dv\right\vert dv^{\prime \prime
}=\int_{0}^{1}\left( \int_{-1}^{0}k_{1}(v^{\prime \prime },v)g(v)\ dv\right)
dv^{\prime \prime }=1
\end{equation*}%
and the inequality%
\begin{equation*}
\left\vert \int_{-1}^{0}k_{1}(v^{\prime \prime },v)e^{-\frac{2isa}{%
\left\vert v\right\vert }}g(v)\ dv\right\vert \leq
\int_{-1}^{0}k_{1}(v^{\prime \prime },v)g(v)\ dv
\end{equation*}%
implies the \textit{equality}%
\begin{equation*}
\left\vert \int_{-1}^{0}k_{1}(v^{\prime \prime },v)e^{-\frac{2isa}{%
\left\vert v\right\vert }}g(v)\ dv\right\vert =\int_{-1}^{0}k_{1}(v^{\prime
\prime },v)g(v)\ dv
\end{equation*}%
which is \textit{not} possible since $s\neq 0$. This ends the proof of (\ref%
{Strictly less 1}). Hence 
\begin{equation*}
\sup_{c\leq \left\vert s\right\vert \leq c^{-1}}\sup_{\varepsilon \in \left[
0,1\right] }\left\Vert K_{1}e^{-\frac{2isa}{\left\vert v\right\vert }}%
\widehat{K}_{2}\right\Vert <1
\end{equation*}%
and (\ref{Estimate G lambda}) gives 
\begin{equation}
\sup_{c\leq \left\vert s\right\vert \leq c^{-1}}\sup_{\varepsilon \in \left[
0,1\right] }\left\Vert G_{\varepsilon +is}\right\Vert <1.
\label{Less than 1 locally}
\end{equation}%
We have 
\begin{equation*}
\left\Vert K_{1}e^{-\frac{2isa}{\left\vert v\right\vert }}\widehat{K}%
_{2}\right\Vert \leq \sup_{v^{\prime }\in \left( 0,1\right)
}\int_{0}^{1}\left\vert \int_{-1}^{0}k_{1}(v^{\prime \prime },v)e^{-\frac{%
2isa}{\left\vert v\right\vert }}e^{-\frac{2\varepsilon a}{\left\vert
v\right\vert }}k_{2}(v,v^{\prime })dv\right\vert dv^{\prime \prime }.
\end{equation*}%
Let us show that $\lim_{\left\vert s\right\vert \rightarrow \infty
}\left\Vert K_{1}e^{-\frac{2isa}{\left\vert v\right\vert }}\widehat{K}%
_{2}\right\Vert =0$ uniformly in $\varepsilon \in \left[ 0,1\right] $.\ By
weak compactness of $K_{i}\ (i=1,2)$ (and an equiintegrability argument) it
suffices to show that for any $\delta >0$%
\begin{equation}
\lim_{\left\vert s\right\vert \rightarrow \infty }\sup_{v^{\prime }\in
\left( 0,1\right) }\int_{\delta }^{1}\left\vert \int_{-1}^{-\delta
}k_{1}(v^{\prime \prime },v)e^{-\frac{2isa}{\left\vert v\right\vert }}e^{-%
\frac{2\varepsilon a}{\left\vert v\right\vert }}k_{2}(v,v^{\prime
})dv\right\vert dv^{\prime \prime }=0  \label{Uniform Riemann-Lebesgue}
\end{equation}%
uniformly in $\varepsilon \in \left[ 0,1\right] .\ $If $K_{2}$ is \textit{%
compact} then $\left\{ k_{2}(.,v^{\prime });\ v^{\prime }\in \left(
0,1\right) \right\} $ is a\textit{\ relatively compact} subset of $%
L^{1}(-1,0)$ and consequently, for almost all $v^{\prime \prime }\in \left(
0,1\right) ,$ \ 
\begin{equation*}
\left\{ k_{1}(v^{\prime \prime },.)e^{-\frac{2\varepsilon a}{\left\vert
.\right\vert }}k_{2}(.,v^{\prime });\ v^{\prime }\in \left( 0,1\right) ,\
\varepsilon \in \left[ 0,1\right] \right\}
\end{equation*}%
is a\textit{\ relatively compact} subset of $L^{1}(-1,-\delta ).$ A
Riemann-Lebesgue argument gives%
\begin{equation*}
\lim_{\left\vert s\right\vert \rightarrow \infty }\int_{-1}^{-\delta
}k_{1}(v^{\prime \prime },v)e^{-\frac{2isa}{\left\vert v\right\vert }}e^{-%
\frac{2\varepsilon a}{\left\vert v\right\vert }}k_{2}(v,v^{\prime })dv=0
\end{equation*}%
\textit{uniformly} in $v^{\prime }\in \left( 0,1\right) $ and $\varepsilon
\in \left[ 0,1\right] .$ Finally, (\ref{Uniform Riemann-Lebesgue}) holds by
the dominated convergence theorem. Hence 
\begin{equation*}
c_{\eta }:=\sup_{\left\vert \lambda \right\vert \geq \eta }\left\Vert
G_{\lambda }\right\Vert <1,\ \left( \eta >0\right)
\end{equation*}%
and%
\begin{equation*}
\sup_{\left\vert \lambda \right\vert \geq \eta }\left\Vert (1-G_{\lambda
})^{-1}\right\Vert \leq (1-c_{\eta })^{-1},\ \left( \eta >0\right) .
\end{equation*}%
Let us analyze the function 
\begin{equation*}
\mathbb{R}
\ni s\rightarrow \sup_{v^{\prime }\in \left( 0,1\right)
}\int_{0}^{1}\left\vert \int_{-1}^{0}k_{1}(v^{\prime \prime },v)e^{-\frac{%
2isa}{\left\vert v\right\vert }}\widehat{k}_{2}(v,v^{\prime })\
dv\right\vert dv^{\prime \prime }
\end{equation*}%
(depending on $\varepsilon \in \left[ 0,1\right] $) in the \textit{vicinity}
of $s=0.\ $Consider first%
\begin{eqnarray*}
&&\left\vert \int_{-1}^{0}k_{1}(v^{\prime \prime },v)e^{-\frac{2isa}{%
\left\vert v\right\vert }}\widehat{k}_{2}(v,v^{\prime })\ dv\right\vert \\
&=&\sqrt{\left( \int_{-1}^{0}k_{1}(v^{\prime \prime },v)\cos (\frac{2sa}{%
\left\vert v\right\vert })\widehat{k}_{2}(v,v^{\prime })\ dv\right)
^{2}+\left( \int_{-1}^{0}k_{1}(v^{\prime \prime },v)\sin (\frac{2sa}{%
\left\vert v\right\vert })\widehat{k}_{2}(v,v^{\prime })\ dv\right) ^{2}}
\end{eqnarray*}%
and let%
\begin{equation*}
u_{\varepsilon }(s,v^{\prime },v^{\prime \prime }):=\left(
\int_{-1}^{0}k_{1}(v^{\prime \prime },v)\cos (\frac{2sa}{\left\vert
v\right\vert })\widehat{k}_{2}(v,v^{\prime })\ dv\right) ^{2}+\left(
\int_{-1}^{0}k_{1}(v^{\prime \prime },v)\sin (\frac{2sa}{\left\vert
v\right\vert })\widehat{k}_{2}(v,v^{\prime })\ dv\right) ^{2}
\end{equation*}%
($\varepsilon $ comes from $\widehat{k}_{2}(v,v^{\prime })=e^{-\frac{%
2\varepsilon a}{\left\vert v\right\vert }}k_{2}(v,v^{\prime })$). We may
write $u_{\varepsilon }(s)$ or $u(s)$ for simplicity. We note that $\ $%
\begin{equation*}
u_{\varepsilon }(0,v^{\prime },v^{\prime \prime })=\left(
\int_{-1}^{0}k_{1}(v^{\prime \prime },v)\widehat{k}_{2}(v,v^{\prime })\
dv\right) ^{2}\leq \left( \int_{-1}^{0}k_{1}(v^{\prime \prime
},v)k_{2}(v,v^{\prime })\ dv\right) ^{2}=u_{0}(0,v^{\prime },v^{\prime
\prime }).
\end{equation*}%
We have%
\begin{eqnarray*}
\frac{\partial u}{\partial s} &=&-4a\left( \int_{-1}^{0}k_{1}(v^{\prime
\prime },v)\cos (\frac{2sa}{\left\vert v\right\vert })\widehat{k}%
_{2}(v,v^{\prime })\ dv\right) \int_{-1}^{0}\frac{k_{1}(v^{\prime \prime },v)%
}{\left\vert v\right\vert }\sin (\frac{2sa}{\left\vert v\right\vert })%
\widehat{k}_{2}(v,v^{\prime })\ dv \\
&&+4a\left( \int_{-1}^{0}k_{1}(v^{\prime \prime },v)\sin (\frac{2sa}{%
\left\vert v\right\vert })\widehat{k}_{2}(v,v^{\prime })\ dv\right) \left(
\int_{-1}^{0}\frac{k_{1}(v^{\prime \prime },v)}{\left\vert v\right\vert }%
\cos (\frac{2sa}{\left\vert v\right\vert })\widehat{k}_{2}(v,v^{\prime })\
dv\right)
\end{eqnarray*}%
so $\frac{\partial u}{\partial s}(0,v^{\prime },v^{\prime \prime })=0.$ We
have%
\begin{equation*}
\left\vert \int_{-1}^{0}k_{1}(v^{\prime \prime },v)e^{-\frac{2isa}{%
\left\vert v\right\vert }}\widehat{k}_{2}(v,v^{\prime })\ dv\right\vert =%
\sqrt{u(\alpha ,v^{\prime },v^{\prime \prime })}
\end{equation*}%
so%
\begin{equation*}
\frac{\partial }{\partial s}\left( \sqrt{u(s,v^{\prime },v^{\prime \prime })}%
\right) =\frac{\frac{\partial u}{\partial s}}{2\sqrt{u(s,v^{\prime
},v^{\prime \prime })}}
\end{equation*}%
is such that%
\begin{equation*}
\frac{\partial }{\partial s}\left( \sqrt{u(s,v^{\prime },v^{\prime \prime })}%
\right) _{s=0}=0
\end{equation*}%
and%
\begin{eqnarray*}
\frac{\partial ^{2}}{\partial s^{2}}\left( \sqrt{u(s,v^{\prime },v^{\prime
\prime })}\right) &=&\frac{1}{2}\frac{\frac{\partial ^{2}u}{\partial s^{2}}%
\sqrt{u}-\frac{\left( \frac{\partial u}{\partial s}\right) ^{2}}{2\sqrt{u}}}{%
u} \\
&=&\frac{1}{2}\frac{2\frac{\partial ^{2}u}{\partial s^{2}}u-\left( \frac{%
\partial u}{\partial s}\right) ^{2}}{2\sqrt{u}u}
\end{eqnarray*}%
so%
\begin{equation*}
\frac{\partial ^{2}}{\partial s^{2}}\left( \sqrt{u(s,v^{\prime },v^{\prime
\prime })}\right) _{\alpha =0}=\frac{1}{2}\frac{\frac{\partial ^{2}u}{%
\partial s^{2}}(0,v^{\prime },v^{\prime \prime })}{\sqrt{u(0,v^{\prime
},v^{\prime \prime })}}.
\end{equation*}%
On the other hand%
\begin{eqnarray*}
\left( \frac{\partial ^{2}u}{\partial s^{2}}\right) _{s=0} &=&-8a^{2}\left(
\int_{-1}^{0}k_{1}(v^{\prime \prime },v)\widehat{k}_{2}(v,v^{\prime })\
dv\right) \left( \int_{-1}^{0}\frac{k_{1}(v^{\prime \prime },v)}{\left\vert
v\right\vert ^{2}}\widehat{k}_{2}(v,v^{\prime })\ dv\right) \\
&&+8a^{2}\left( \int_{-1}^{0}\frac{k_{1}(v^{\prime \prime },v)}{\left\vert
v\right\vert }\widehat{k}_{2}(v,v^{\prime })\ dv\right) \left( \int_{-1}^{0}%
\frac{k_{1}(v^{\prime \prime },v)}{\left\vert v\right\vert }\widehat{k}%
_{2}(v,v^{\prime })\ dv\right)
\end{eqnarray*}%
so $-\frac{1}{8a^{2}}\left( \frac{\partial ^{2}u}{\partial s^{2}}\right)
_{s=0}$ is given by%
\begin{equation*}
\left( \int_{-1}^{0}k_{1}(v^{\prime \prime },v)\widehat{k}_{2}(v,v^{\prime
})\ dv\right) \left( \int_{-1}^{0}\frac{k_{1}(v^{\prime \prime },v)}{%
\left\vert v\right\vert ^{2}}\widehat{k}_{2}(v,v^{\prime })\ dv\right)
-\left( \int_{-1}^{0}\frac{k_{1}(v^{\prime \prime },v)}{\left\vert
v\right\vert }\widehat{k}_{2}(v,v^{\prime })\ dv\right) ^{2}.
\end{equation*}%
Since we have strict inequality in the Cauchy-Schwarz inequality which is to
say 
\begin{equation*}
\left( \int_{-1}^{0}\frac{k_{1}(v^{\prime \prime },v)}{\left\vert
v\right\vert }\widehat{k}_{2}(v,v^{\prime })\ dv\right) ^{2}<\left(
\int_{-1}^{0}k_{1}(v^{\prime \prime },v)\widehat{k}_{2}(v,v^{\prime })\
dv\right) \left( \int_{-1}^{0}\frac{k_{1}(v^{\prime \prime },v)}{\left\vert
v\right\vert ^{2}}\widehat{k}_{2}(v,v^{\prime })\ dv\right)
\end{equation*}%
we see \ that%
\begin{equation*}
c_{\varepsilon }(v^{\prime },v^{\prime \prime }):=\left(
\int_{-1}^{0}k_{1}(v^{\prime \prime },v)\widehat{k}_{2}(v,v^{\prime })\
dv\right) \left( \int_{-1}^{0}\frac{k_{1}(v^{\prime \prime },v)}{\left\vert
v\right\vert ^{2}}\widehat{k}_{2}(v,v^{\prime })\ dv\right) -\left(
\int_{-1}^{0}\frac{k_{1}(v^{\prime \prime },v)}{\left\vert v\right\vert }%
\widehat{k}_{2}(v,v^{\prime })\ dv\right) ^{2}>0
\end{equation*}%
and is \textit{continuous} for smooth (say continuous) functions $k_{1}$ and 
$k_{2}\ $($\varepsilon $ comes again from $\widehat{k}_{2}(v,v^{\prime
})=e^{-\frac{2\varepsilon a}{\left\vert v\right\vert }}k_{2}(v,v^{\prime })$%
).\ Now%
\begin{equation*}
\sqrt{u(s,v^{\prime },v^{\prime \prime })}=\sqrt{u(0,v^{\prime },v^{\prime
\prime })}+\frac{s^{2}}{2}\frac{\partial ^{2}}{\partial s^{2}}\left( \sqrt{%
u(\zeta ,v^{\prime },v^{\prime \prime })}\right) \ \ \ 
\end{equation*}%
where $\zeta \in \left( 0,s\right) $ or $\zeta \in \left( s,0\right) $
according as $s>0$ or $s<0.$ Write it as%
\begin{equation*}
\sqrt{u(0,v^{\prime },v^{\prime \prime })}-\sqrt{u(s,v^{\prime },v^{\prime
\prime })}=\frac{s^{2}}{2}\left( -\frac{\partial ^{2}}{\partial s^{2}}\left( 
\sqrt{u(\zeta ,v^{\prime },v^{\prime \prime })}\right) \right) .
\end{equation*}%
For smooth (say continuous) functions $k_{1}$ and $k_{2}$ 
\begin{equation*}
-\frac{\partial ^{2}}{\partial s^{2}}\left( \sqrt{u(s,v^{\prime },v^{\prime
\prime })}\right) \rightarrow -\frac{\partial ^{2}}{\partial s^{2}}\left( 
\sqrt{u(s,v^{\prime },v^{\prime \prime })}\right) _{s=0}=-\frac{1}{2}\frac{%
\frac{\partial ^{2}u}{\partial s^{2}}(0,v^{\prime },v^{\prime \prime })}{%
\sqrt{u(0,v^{\prime },v^{\prime \prime })}}
\end{equation*}%
(as $s\rightarrow 0$)\textit{\ uniformly} in $(v^{\prime },v^{\prime \prime
})$ and $\varepsilon \in \left[ 0,1\right] .\ $On the other hand 
\begin{equation*}
-\frac{1}{8a^{2}}\left( \frac{\partial ^{2}u}{\partial s^{2}}(0,v^{\prime
},v^{\prime \prime })\right) =c_{\varepsilon }(v^{\prime },v^{\prime \prime
})
\end{equation*}%
so%
\begin{equation*}
-\frac{1}{2}\frac{\frac{\partial ^{2}u}{\partial s^{2}}(0,v^{\prime
},v^{\prime \prime })}{\sqrt{u(0,v^{\prime },v^{\prime \prime })}}=\widehat{%
c_{\varepsilon }}(v^{\prime },v^{\prime \prime }):=\frac{4a^{2}c_{%
\varepsilon }(v^{\prime },v^{\prime \prime })}{\sqrt{u_{\varepsilon
}(0,v^{\prime },v^{\prime \prime })}}.
\end{equation*}%
is (say) continuous and \textit{bounded away from zero} \textit{uniformly}
in $\varepsilon \in \left[ 0,1\right] $. Hence%
\begin{equation*}
\sqrt{u_{\varepsilon }(0,v^{\prime },v^{\prime \prime })}-\sqrt{%
u_{\varepsilon }(s,v^{\prime },v^{\prime \prime })}\geq \frac{s^{2}}{2}\frac{%
\widehat{c_{\varepsilon }}(v^{\prime },v^{\prime \prime })}{2}
\end{equation*}%
for $s$ small enough. Let%
\begin{equation*}
\widehat{\beta }:=\inf_{\varepsilon \in \left[ 0,1\right] }\inf_{v^{\prime
}\in \left( 0,1\right) }\int_{0}^{1}\frac{\widehat{c_{\varepsilon }}%
(v^{\prime },v^{\prime \prime })}{2}dv^{\prime \prime }>0.
\end{equation*}%
Thus, 
\begin{equation*}
\int_{-1}^{0}k_{1}(v^{\prime \prime },v)\widehat{k}_{2}(v,v^{\prime })\
dv-\left\vert \int_{-1}^{0}k_{1}(v^{\prime \prime },v)e^{-\frac{2isa}{%
\left\vert v\right\vert }}\widehat{k}_{2}(v,v^{\prime })\ dv\right\vert \geq 
\frac{s^{2}}{2}\frac{\widehat{c_{\varepsilon }}(v^{\prime },v^{\prime \prime
})}{2}\ \ (\varepsilon \in \left[ 0,1\right] )
\end{equation*}%
for $s$ small enough. Thus%
\begin{equation*}
\int_{0}^{1}dv^{\prime \prime }\int_{-1}^{0}k_{1}(v^{\prime \prime },v)%
\widehat{k}_{2}(v,v^{\prime })\ dv-\int_{0}^{1}dv^{\prime \prime }\left\vert
\int_{-1}^{0}k_{1}(v^{\prime \prime },v)e^{-\frac{2isa}{\left\vert
v\right\vert }}\widehat{k}_{2}(v,v^{\prime })\ dv\right\vert \geq \frac{s^{2}%
}{2}\int_{0}^{1}\frac{\widehat{c_{\varepsilon }}(v^{\prime },v^{\prime
\prime })}{2}dv^{\prime \prime }
\end{equation*}%
and%
\begin{equation*}
1-\int_{0}^{1}dv^{\prime \prime }\left\vert \int_{-1}^{0}k_{1}(v^{\prime
\prime },v)e^{-\frac{2isa}{\left\vert v\right\vert }}\widehat{k}%
_{2}(v,v^{\prime })\ dv\right\vert \geq \frac{s^{2}}{2}\int_{0}^{1}\frac{%
\widehat{c_{\varepsilon }}(v^{\prime },v^{\prime \prime })}{2}dv^{\prime
\prime }
\end{equation*}%
so that taking the infimum in $v^{\prime }\in \left( 0,1\right) $ and$\
\varepsilon \in \left[ 0,1\right] \ $on both sides%
\begin{eqnarray*}
&&1-\sup_{\varepsilon \in \left[ 0,1\right] }\sup_{v^{\prime }\in \left(
0,1\right) }\int_{0}^{1}dv^{\prime \prime }\left\vert
\int_{-1}^{0}k_{1}(v^{\prime \prime },v)e^{-\frac{2isa}{\left\vert
v\right\vert }}\widehat{k}_{2}(v,v^{\prime })\ dv\right\vert \\
&\geq &\frac{s^{2}}{2}\inf_{\varepsilon \in \left[ 0,1\right]
}\inf_{v^{\prime }\in \left( 0,1\right) }\int_{0}^{1}\frac{\widehat{%
c_{\varepsilon }}(v^{\prime },v^{\prime \prime })}{2}dv^{\prime \prime }\geq 
\frac{s^{2}}{2}\widehat{\beta }
\end{eqnarray*}%
i.e.%
\begin{equation*}
1-\sup_{\varepsilon \in \left[ 0,1\right] }\left\Vert K_{1}e^{-\frac{%
2\lambda a}{\left\vert v\right\vert }}K_{2}\right\Vert \geq \frac{s^{2}}{2}%
\widehat{\beta }.
\end{equation*}%
Hence%
\begin{equation*}
\sup_{\varepsilon \in \left[ 0,1\right] }\left\Vert G_{\varepsilon
+is}\right\Vert \leq 1-\beta _{1}\beta _{2}\left( 1-\sup_{\varepsilon \in %
\left[ 0,1\right] }\left\Vert K_{1}e^{-\frac{2\lambda a}{\left\vert
v\right\vert }}K_{2}\right\Vert \right) \leq 1-\beta _{1}\beta _{2}\frac{%
s^{2}}{2}\widehat{\beta }
\end{equation*}%
This ends the proof in the case (\ref{Cas 1}).

Consider now the case (\ref{Cas 2}). In this case $\beta _{2}=0$\ and $%
\alpha _{2}=1$\ so 
\begin{eqnarray*}
G_{\lambda } &=&\left( \alpha _{1}R_{1}+\beta _{1}K_{1}\right) e^{-\frac{%
2\lambda a}{\left\vert v\right\vert }}R_{2}e^{-\frac{2\lambda a}{\left\vert
v\right\vert }} \\
&=&\alpha _{1}R_{1}e^{-\frac{2\lambda a}{\left\vert v\right\vert }}R_{2}e^{-%
\frac{2\lambda a}{\left\vert v\right\vert }}+\beta _{1}K_{1}e^{-\frac{%
2\lambda a}{\left\vert v\right\vert }}R_{2}e^{-\frac{2\lambda a}{\left\vert
v\right\vert }}.
\end{eqnarray*}%
It follows that 
\begin{eqnarray*}
G_{\lambda }^{2} &=&\left( \alpha _{1}R_{1}e^{-\frac{2\lambda a}{\left\vert
v\right\vert }}R_{2}e^{-\frac{2\lambda a}{\left\vert v\right\vert }}+\beta
_{1}K_{1}e^{-\frac{2\lambda a}{\left\vert v\right\vert }}R_{2}e^{-\frac{%
2\lambda a}{\left\vert v\right\vert }}\right) ^{2} \\
&=&\left( \alpha _{1}R_{1}e^{-\frac{2\lambda a}{\left\vert v\right\vert }%
}R_{2}e^{-\frac{2\lambda a}{\left\vert v\right\vert }}\right) \left( \alpha
_{1}R_{1}e^{-\frac{2\lambda a}{\left\vert v\right\vert }}R_{2}e^{-\frac{%
2\lambda a}{\left\vert v\right\vert }}\right) \\
&&+\left( \beta _{1}K_{1}e^{-\frac{2\lambda a}{\left\vert v\right\vert }%
}R_{2}e^{-\frac{2\lambda a}{\left\vert v\right\vert }}\right) \left( \beta
_{1}K_{1}e^{-\frac{2\lambda a}{\left\vert v\right\vert }}R_{2}e^{-\frac{%
2\lambda a}{\left\vert v\right\vert }}\right) \\
&&+\left( \alpha _{1}R_{1}e^{-\frac{2\lambda a}{\left\vert v\right\vert }%
}R_{2}e^{-\frac{2\lambda a}{\left\vert v\right\vert }}\right) \left( \beta
_{1}K_{1}e^{-\frac{2\lambda a}{\left\vert v\right\vert }}R_{2}e^{-\frac{%
2\lambda a}{\left\vert v\right\vert }}\right) \\
&&+\left( \beta _{1}K_{1}e^{-\frac{2\lambda a}{\left\vert v\right\vert }%
}R_{2}e^{-\frac{2\lambda a}{\left\vert v\right\vert }}\right) \left( \alpha
_{1}R_{1}e^{-\frac{2\lambda a}{\left\vert v\right\vert }}R_{2}e^{-\frac{%
2\lambda a}{\left\vert v\right\vert }}\right) \\
&=&\beta _{1}^{2}K_{1}e^{-\frac{2\lambda a}{\left\vert v\right\vert }%
}R_{2}e^{-\frac{2\lambda a}{\left\vert v\right\vert }}K_{1}e^{-\frac{%
2\lambda a}{\left\vert v\right\vert }}R_{2}e^{-\frac{2\lambda a}{\left\vert
v\right\vert }}+H_{\lambda }
\end{eqnarray*}%
where 
\begin{eqnarray*}
H_{\lambda } &=&\alpha _{1}^{2}R_{1}e^{-\frac{2\lambda a}{\left\vert
v\right\vert }}R_{2}e^{-\frac{2\lambda a}{\left\vert v\right\vert }}R_{1}e^{-%
\frac{2\lambda a}{\left\vert v\right\vert }}R_{2}e^{-\frac{2\lambda a}{%
\left\vert v\right\vert }} \\
&&+\alpha _{1}\beta _{1}R_{1}e^{-\frac{2\lambda a}{\left\vert v\right\vert }%
}R_{2}e^{-\frac{2\lambda a}{\left\vert v\right\vert }}K_{1}e^{-\frac{%
2\lambda a}{\left\vert v\right\vert }}R_{2}e^{-\frac{2\lambda a}{\left\vert
v\right\vert }} \\
&&+\alpha _{1}\beta _{1}K_{1}e^{-\frac{2\lambda a}{\left\vert v\right\vert }%
}R_{2}e^{-\frac{2\lambda a}{\left\vert v\right\vert }}R_{1}e^{-\frac{%
2\lambda a}{\left\vert v\right\vert }}R_{2}e^{-\frac{2\lambda a}{\left\vert
v\right\vert }}.
\end{eqnarray*}%
Hence%
\begin{eqnarray*}
\left\Vert G_{\lambda }^{2}\right\Vert &\leq &\beta _{1}^{2}\left\Vert
K_{1}e^{-\frac{2\lambda a}{\left\vert v\right\vert }}R_{2}e^{-\frac{2\lambda
a}{\left\vert v\right\vert }}K_{1}\right\Vert +\left( 1-\beta _{1}\right)
^{2}+2\left( 1-\beta _{1}\right) \beta _{1} \\
&=&\beta _{1}^{2}\left\Vert K_{1}e^{-\frac{2\lambda a}{\left\vert
v\right\vert }}R_{2}e^{-\frac{2\lambda a}{\left\vert v\right\vert }%
}K_{1}\right\Vert +\left[ 1-\beta _{1}+\beta _{1}\right] ^{2}-\beta _{1}^{2}
\\
&=&1-\beta _{1}^{2}\left( 1-\left\Vert K_{1}e^{-\frac{2\lambda a}{\left\vert
v\right\vert }}R_{2}e^{-\frac{2\lambda a}{\left\vert v\right\vert }%
}K_{1}\right\Vert \right) .
\end{eqnarray*}%
It is easy to see that 
\begin{equation*}
K_{1}e^{-\frac{2\lambda a}{\left\vert v\right\vert }}R_{2}e^{-\frac{2\lambda
a}{\left\vert v\right\vert }}K_{1}f=\int_{-1}^{0}k_{1}(v^{\prime \prime
},v)\left( e^{-\frac{4isa}{\left\vert v\right\vert }}\int_{-1}^{0}e^{-\frac{%
4\varepsilon a}{\left\vert v\right\vert }}k_{1}(-v,v^{\prime })f(v^{\prime
})dv^{\prime }\right) dv
\end{equation*}%
so that $K_{1}e^{-\frac{2\lambda a}{\left\vert v\right\vert }}R_{2}e^{-\frac{%
2\lambda a}{\left\vert v\right\vert }}K_{1}$ has the \textit{same structure}
as the operator $K_{1}e^{-\frac{2\lambda a}{\left\vert v\right\vert }}K_{2}$
considered previously (see (\ref{K1 K2})). In particular, arguing as
previously, one sees that for any $0<c<c^{\prime }$ 
\begin{equation*}
\sup_{c\leq \left\vert s\right\vert \leq c^{-1}}\sup_{\varepsilon \in \left[
0,1\right] }\left\Vert K_{1}e^{-\frac{2\lambda a}{\left\vert v\right\vert }%
}R_{2}e^{-\frac{2\lambda a}{\left\vert v\right\vert }}K_{1}\right\Vert <1\ 
\end{equation*}%
so%
\begin{equation}
\sup_{c\leq \left\vert s\right\vert \leq c^{-1}}\sup_{\varepsilon \in \left[
0,1\right] }\left\Vert G_{\varepsilon +is}^{2}\right\Vert <1
\label{Less than 1 locally bis}
\end{equation}%
and%
\begin{equation*}
\lim_{\left\vert s\right\vert \rightarrow \infty }\left\Vert K_{1}e^{-\frac{%
2\lambda a}{\left\vert v\right\vert }}R_{2}e^{-\frac{2\lambda a}{\left\vert
v\right\vert }}K_{1}\right\Vert =0
\end{equation*}%
uniformly in $\ \varepsilon \in \left[ 0,1\right] $. Finally, as previously, 
$c_{\eta }:=\sup_{\left\vert \lambda \right\vert \geq \eta }\left\Vert
G_{\lambda }^{2}\right\Vert <1\ \left( \eta >0\right) $ and there exists $%
\widehat{c}>0$ such that%
\begin{equation*}
\left\Vert G_{\lambda }^{2}\right\Vert \leq 1-\widehat{c}\left\vert \func{Im}%
\lambda \right\vert ^{2}\ (\lambda \rightarrow 0).
\end{equation*}%
Since $r_{\sigma }(G_{\lambda })<1$ for $\func{Re}\lambda \geq 0$ and $%
\lambda \neq 0$ (see Theorem \ref{Theorem rayon spectral G lambda}) then for 
$\lambda \neq 0$ $\ $%
\begin{eqnarray*}
\left( 1-G_{\lambda }\right) ^{-1} &=&\sum_{j=0}^{\infty }G_{\lambda
}^{j}=\sum_{j=0}^{\infty }G_{\lambda }^{2j}+\sum_{j=0}^{\infty }G_{\lambda
}^{2j+1}=\sum_{j=0}^{\infty }G_{\lambda }^{2j}+G_{\lambda
}\sum_{j=0}^{\infty }G_{\lambda }^{2j} \\
&=&\left( 1-G_{\lambda }^{2}\right) ^{-1}+G_{\lambda }\left( 1-G_{\lambda
}^{2}\right) ^{-1}
\end{eqnarray*}%
and%
\begin{equation*}
\left\Vert \left( 1-G_{\lambda }\right) ^{-1}\right\Vert \leq \frac{1}{%
1-\left\Vert G_{\lambda }^{2}\right\Vert }+\frac{\left\Vert G_{\lambda
}\right\Vert }{1-\left\Vert G_{\lambda }^{2}\right\Vert }\leq \frac{2}{%
1-\left\Vert G_{\lambda }^{2}\right\Vert }.
\end{equation*}%
Finally $\sup_{\left\vert \lambda \right\vert \geq \eta }\left\Vert \left(
1-G_{\lambda }\right) ^{-1}\right\Vert \leq \frac{2}{1-c_{\eta }}$ and 
\begin{equation*}
\left\Vert \left( 1-G_{\lambda }\right) ^{-1}\right\Vert \leq \frac{2}{%
1-\left\Vert G_{\lambda }^{2}\right\Vert }\leq 2\widehat{c}^{-1}\left\vert 
\func{Im}\lambda \right\vert ^{-2}\ (\lambda \rightarrow 0).
\end{equation*}
\end{proof}

\begin{remark}
(i) We have also a similar statement with $\widetilde{G}_{\lambda
}:=O_{2}e^{-\frac{\lambda }{v}2a}O_{1}e^{-\frac{\lambda }{\left\vert
v\right\vert }2a}$ instead of $G_{\lambda }.$

(ii) The compactness assumption on $K_{i}\ (i=1,2)$ (which is used in the
study of the norm of $\left\Vert G_{\lambda }\right\Vert $ or $\left\Vert
G_{\lambda }^{2}\right\Vert $ as $\left\vert s\right\vert \rightarrow \infty 
$ only) could be avoided by analyzing $G_{\lambda }^{2}$ in the case (i) and 
$G_{\lambda }^{3}$ in the case (ii) (and using Dunford-Pettis arguments).
Such a proof is however too cumbersome to be presented. Note that $K_{i}\
(i=1,2)$ are compact if the kernels $k_{i}(.,.)\ $of $K_{i}\ (i=1,2)$ are%
\textit{\ continuous.}
\end{remark}

\begin{corollary}
\label{Corollary prolomgement to imaginary axis}Let $\beta _{1}+\beta
_{2}>0. $\textit{\ We assume that } for almost all $v^{\prime \prime }\in
(0,1),$ $k_{1}(v^{\prime \prime },.)\in L^{\infty }(-1,0)$ (and for almost
all $v^{\prime \prime }\in (-1,0),$ $k_{2}(v^{\prime \prime },.)\in
L^{\infty }((0,1))$). Then \textit{\ }%
\begin{equation*}
\left\{ \lambda \in 
\mathbb{C}
;\ \func{Re}\lambda >0\right\} \ni \lambda \rightarrow \left( 1-G_{\lambda
}\right) ^{-1}
\end{equation*}%
extends continuously (in the strong operator topology) to $i%
\mathbb{R}
\backslash \left\{ 0\right\} .$
\end{corollary}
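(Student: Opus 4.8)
The plan is to combine the spectral-radius information already available with a resolvent identity that isolates a "good" (norm-contractive) part of $G_\lambda$ from an "integral" part to which a uniform Riemann--Lebesgue/dominated-convergence argument applies. First I would fix $s\in\mathbb{R}\setminus\{0\}$ and recall from Theorem \ref{Theorem rayon spectral G lambda} that $r_\sigma(G_{is})<1$, so that $(1-G_{is})^{-1}=\sum_{j\ge0}G_{is}^{j}$ exists as a bounded operator; the issue is purely the continuity of $\lambda\mapsto(1-G_\lambda)^{-1}$ as $\func{Re}\lambda\to0_{+}$ (strongly), given that $\func{Re}\lambda\mapsto\|G_\lambda\|$ need not drop below $1$ on the imaginary axis. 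The key structural input is the splitting used in the proof of Theorem \ref{Theorem estimate resolvent G lamda}: in case $\beta_1,\beta_2>0$ one writes $G_\lambda=\beta_1\beta_2\,K_1e^{-\frac{2\lambda a}{|v|}}K_2e^{-\frac{2\lambda a}{|v|}}+H_\lambda$ with $\|H_\lambda\|\le1-\beta_1\beta_2$ uniformly for $\func{Re}\lambda\ge0$, and the $K_1 e^{-\cdots}K_2$ term has norm $<1$ for every $\lambda$ with $\func{Re}\lambda\ge0$, $\lambda\ne0$; analogously, in the case $\beta_2=0$ (say $\beta_1>0$) one passes to $G_\lambda^2$. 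Thus $\|G_{is}\|<1$ (or $\|G_{is}^{2}\|<1$) for each fixed $s\ne0$, which already gives $(1-G_{is})^{-1}=\sum_j G_{is}^{j}$ convergent in operator norm; and the bound is in fact locally uniform in $s$ away from $0$ by (\ref{Less than 1 locally}) / (\ref{Less than 1 locally bis}).

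Next I would establish strong (indeed norm) continuity of $\lambda\mapsto G_\lambda$ up to the imaginary axis away from $0$. For fixed $f\in L^1$, the map $\lambda\mapsto e^{-\frac{2\lambda a}{|v|}}f$ is continuous from $\{\func{Re}\lambda\ge0\}$ into $L^1$ by dominated convergence (the multiplier is bounded by $1$ and continuous in $\lambda$ pointwise in $v$), and composing with the bounded operators $O_1,O_2$ preserves this; since $\|G_\lambda\|\le e^{-4a\func{Re}\lambda}\le1$ is uniformly bounded, strong continuity of $\lambda\mapsto G_\lambda$ on $\{\func{Re}\lambda\ge0\}$ follows. (Using that $K_i$ are integral operators with the stated $L^\infty$ hypotheses on the kernels, one in fact gets norm continuity of the $K_1e^{-\cdots}K_2$ block, as exploited in the previous proof.) Having this, I would pick a compact neighbourhood $[s-\delta,s+\delta]$ of $s$ inside $\mathbb{R}\setminus\{0\}$ on which, by (\ref{Less than 1 locally}) (resp. (\ref{Less than 1 locally bis})), $\sup_{\varepsilon\in[0,1]}\|G_{\varepsilon+i\sigma}\|\le\theta<1$ (resp. the same for $G^2$); then $(1-G_{\varepsilon+i\sigma})^{-1}$ is given by a geometric series (resp. via $(1-G^2)^{-1}+G(1-G^2)^{-1}$ as in the proof of Theorem \ref{Theorem estimate resolvent G lamda}) which is uniformly norm-bounded by $(1-\theta)^{-1}$ (resp. $2(1-\theta)^{-1}$) on that neighbourhood. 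Strong continuity of $\lambda\mapsto G_\lambda$ together with the uniform bound on $(1-G_\lambda)^{-1}$ then upgrades, via the identity
\begin{equation*}
(1-G_{\lambda_1})^{-1}-(1-G_{\lambda_2})^{-1}=(1-G_{\lambda_1})^{-1}\,(G_{\lambda_1}-G_{\lambda_2})\,(1-G_{\lambda_2})^{-1},
\end{equation*}
to strong continuity of $\lambda\mapsto(1-G_\lambda)^{-1}$ up to $i\mathbb{R}\setminus\{0\}$: for fixed $g$, $\|(1-G_{\lambda_1})^{-1}g-(1-G_{\lambda_2})^{-1}g\|\le(1-\theta)^{-1}\|(G_{\lambda_1}-G_{\lambda_2})(1-G_{\lambda_2})^{-1}g\|$, and the right-hand side tends to $0$ as $\lambda_1\to\lambda_2$ by strong continuity of $G_\lambda$ applied to the fixed vector $(1-G_{\lambda_2})^{-1}g$ (here one uses a routine $3\varepsilon$-argument to handle that the vector $(1-G_{\lambda_2})^{-1}g$ also varies, using its local boundedness).

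The main obstacle, and the reason the kernel hypotheses $k_1(v'',\cdot)\in L^\infty$ (and its symmetric analogue) are imposed, is proving the strict inequality $\|G_{\varepsilon+is}\|<1$ (resp. for $G^2$) \emph{uniformly} on compact subsets of $\{\varepsilon\ge0\}\times(\mathbb{R}\setminus\{0\})$ --- i.e. establishing (\ref{Less than 1 locally}) / (\ref{Less than 1 locally bis}). This is exactly the contradiction-and-weak-compactness argument carried out inside the proof of Theorem \ref{Theorem estimate resolvent G lamda}: one supposes the supremum equals $1$ along a sequence, extracts a weak limit $g$ of the kernels $k_2(\cdot,v_j')$ using weak compactness of $K_2$, uses the $L^\infty$ bound on $k_1(v'',\cdot)$ to pass to the limit under the integral (controlling the contribution near $v=0$ by equiintegrability), and arrives at an equality $\bigl|\int k_1(v'',v)e^{-\frac{2isa}{|v|}}g(v)\,dv\bigr|=\int k_1(v'',v)g(v)\,dv$ with $s\ne0$, which is impossible because $e^{-\frac{2isa}{|v|}}$ is non-constant in $v$ (Rudin, Thm 1.39). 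Since all of this is already proved in Theorem \ref{Theorem estimate resolvent G lamda} (and (\ref{Less than 1 locally}), (\ref{Less than 1 locally bis}) are recorded there), the corollary follows by the assembly just described; the remaining bookkeeping --- the two cases $\beta_1,\beta_2>0$ versus one of them zero, and the passage from $G^2$ back to $G$ via $(1-G)^{-1}=(1-G^2)^{-1}+G(1-G^2)^{-1}$ --- is routine.
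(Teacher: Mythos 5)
Your proposal is correct and follows essentially the same route as the paper's own proof: both rest on the locally uniform bounds $\sup_{c\le|s|\le c^{-1}}\sup_{\varepsilon\in[0,1]}\|G_{\varepsilon+is}\|<1$ (resp.\ for $G^2$) recorded as (\ref{Less than 1 locally}) and (\ref{Less than 1 locally bis}) inside the proof of Theorem \ref{Theorem estimate resolvent G lamda}, together with strong continuity of $\lambda\mapsto G_\lambda$. The paper simply passes to the limit termwise in the Neumann series $(I+G_{\lambda_l})\sum_j(G_{\lambda_l}^2)^jf$ under the uniform majorant $c<1$, whereas you reach the same conclusion via the resolvent identity and a $3\varepsilon$ argument — a slightly more explicit but equivalent packaging of the same ingredients.
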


\begin{proof}
Let $\widehat{\lambda }=i\widehat{s}\ \ (\widehat{s}\neq 0)$ and $\lambda
_{l}=\varepsilon _{l}+is_{l}\rightarrow \widehat{\lambda }\ \ (l\rightarrow
\infty ).$ By the part of Theorem \ref{Theorem estimate resolvent G lamda}
which does \textit{not} rely on the compactness of $K_{i}\ (i=1,2)$ (see (%
\ref{Less than 1 locally}) and (\ref{Less than 1 locally bis})) there exists 
$c<1$ such that $\left\Vert G_{\lambda _{l}}^{2}\right\Vert \leq c\ \
\forall l$ and then we can pass to the limit in%
\begin{equation*}
\left( 1-G_{\lambda _{l}}\right) ^{-1}f=(I+G_{\lambda _{l}})\left(
1-G_{\lambda _{l}}^{2}\right) ^{-1}f=(I+G_{\lambda _{l}})\sum_{j=0}^{\infty
}\left( G_{\lambda _{l}}^{2}\right) ^{j}f
\end{equation*}%
as $\varepsilon _{l}\rightarrow 0_{+}$ and $s_{l}\rightarrow \widehat{s}$ to
show that $\left( 1-G_{\lambda _{l}}\right) ^{-1}f\rightarrow \left( 1-G_{%
\widehat{\lambda }}\right) ^{-1}f\ \ \ (l\rightarrow \infty ).$
\end{proof}

\begin{remark}
\label{Remark prolongement imaginary axis}We have also a similar statement
with $\left( 1-\widetilde{G}_{\lambda }\right) ^{-1}$ instead of $\left(
1-G_{\lambda }\right) ^{-1}.$
\end{remark}

\begin{remark}
\label{Remark on continuity assumptions}In Theorem \ref{Theorem estimate
resolvent G lamda}, the continuity assumption on the kernels $k_{1}$ and $%
k_{2}\ $could probably be replaced by a piecewise continuity assumption; we
have not tried to elaborate on this point here.
\end{remark}

\section{\label{Section Rates of convergence}Rates of convergence to
equilibrium}

We give first algebraic estimates of the resolvent on the imaginary axis.

\begin{theorem}
\label{Theorem algebraic Estimate of the resolvent}We assume that $%
O_{1}=K_{1}$ or $O_{2}=K_{2}.\ $Let the kernels $k_{i}(.,.)\ $of $K_{i}\
(i=1,2)$ be\textit{\ continuous. Let} $k\in 
\mathbb{N}
$ \textit{and let} (\ref{Unif boudedness derivaives})(\ref{Uniformly bounded
derivatives suppl})(\ref{Hyp 1})(\ref{Hyp sur O2}) be satisfied. Then, for
any $g\in Z,\ $ 
\begin{equation*}
\left\{ \lambda \in 
\mathbb{C}
;\ \func{Re}\lambda >0\right\} \ni \lambda \rightarrow (\lambda
-T_{O})^{-1}g\in L^{1}(\Omega )
\end{equation*}%
extends continuously to $i%
\mathbb{R}
\backslash \left\{ 0\right\} $ as a $C^{k}$ function 
\begin{equation*}
\mathbb{R}
\backslash \left\{ 0\right\} \ni s\rightarrow F_{g}(s)\in L^{1}(\Omega )
\end{equation*}%
such that 
\begin{equation}
\sup_{\left\vert s\right\vert \geq 1}\left\Vert \frac{d^{j}}{ds^{j}}%
F_{g}(s)\right\Vert <+\infty \ (0\leq j\leq k)
\label{Estimation resolvante 1}
\end{equation}%
and there exists a constant $C>0$ such that%
\begin{equation}
\left\Vert \frac{d^{j}}{ds^{j}}F_{g}(s)\right\Vert \leq \frac{C}{s^{2(j+1)}}%
\left\Vert g\right\Vert _{Z}\ \ (0\leq j\leq k,\ 0<\left\vert s\right\vert
\leq 1).  \label{Estimation resolvante 2}
\end{equation}
\end{theorem}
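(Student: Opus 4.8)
The plan is to assemble Theorem \ref{Theorem algebraic Estimate of the resolvent} by feeding the two quantitative bounds on $(1-G_{is})^{-1}$ from Theorem \ref{Theorem estimate resolvent G lamda} into the general resolvent estimate of Theorem \ref{Theorem estimate resolvent}. The continuous extension of $\lambda\mapsto(\lambda-T_O)^{-1}g$ from $\{\operatorname{Re}\lambda>0\}$ to $i\mathbb{R}\backslash\{0\}$, together with the fact that the limit $F_g$ lies in $C^k(\mathbb{R}\backslash\{0\};L^1(\Omega))$, is already provided by Theorem \ref{Theorem estimate resolvent} (which in turn rests on Corollary \ref{Corollary prolomgement to imaginary axis}, valid here since $k_1,k_2$ are continuous, hence $k_i(v'',\cdot)\in L^\infty$ for a.e.\ $v''$). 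So the only work left is to convert the abstract bound
\begin{equation*}
\left\Vert \frac{d^{j}}{ds^{j}}F_{g}(s)\right\Vert \leq C\Bigl(\sum_{l=0}^{j+1}\left\Vert (1-G_{is})^{-1}\right\Vert ^{l}\Bigr)\left\Vert g\right\Vert _{Z}\qquad(0\leq j\leq k,\ s\neq 0)
\end{equation*}
into the two explicit estimates (\ref{Estimation resolvante 1}) and (\ref{Estimation resolvante 2}).

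\textbf{Step 1 (behaviour away from the origin).} Since $O_1=K_1$ or $O_2=K_2$ and $\beta_1+\beta_2>0$, the hypotheses of Theorem \ref{Theorem estimate resolvent G lamda} are met (with $\beta_1=1$ or $\beta_2=1$ this is case (i) or (ii) there, and the intermediate cases $0<\beta_i<1$ are covered once one of the operators is $K_i$). Part (iii) of that theorem gives $\sup_{|s|\geq\eta}\left\Vert(1-G_{is})^{-1}\right\Vert<\infty$ for every $\eta>0$; in particular for $\eta=1$ the sum $\sum_{l=0}^{j+1}\left\Vert(1-G_{is})^{-1}\right\Vert^l$ is bounded on $|s|\geq1$, and plugging this into the abstract bound immediately yields (\ref{Estimation resolvante 1}).

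\textbf{Step 2 (behaviour near the origin).} Again by Theorem \ref{Theorem estimate resolvent G lamda}(iii), there is a constant $C_0>0$ with $\left\Vert(1-G_{is})^{-1}\right\Vert\leq C_0\,|s|^{-2}$ for $0<|s|\leq1$. Hence for such $s$ and $0\leq j\leq k$,
\begin{equation*}
\sum_{l=0}^{j+1}\left\Vert (1-G_{is})^{-1}\right\Vert ^{l}\leq \sum_{l=0}^{j+1} C_0^{l}\,|s|^{-2l}\leq C'\,|s|^{-2(j+1)},
\end{equation*}
the last inequality because $|s|\leq1$ makes the top term $l=j+1$ dominate. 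Substituting into the abstract bound from Theorem \ref{Theorem estimate resolvent} gives $\left\Vert\frac{d^{j}}{ds^{j}}F_{g}(s)\right\Vert\leq C|s|^{-2(j+1)}\left\Vert g\right\Vert_{Z}$, which is (\ref{Estimation resolvante 2}).

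\textbf{I do not anticipate a serious obstacle here:} the statement is essentially a clean bookkeeping combination of two earlier theorems, and the only point requiring a moment's care is checking that the structural hypotheses (\ref{Unif boudedness derivaives})--(\ref{Hyp sur O2}) invoked by Theorem \ref{Theorem estimate resolvent} are compatible with the hypotheses of Theorem \ref{Theorem estimate resolvent G lamda} (continuity of the kernels and one of the $O_i$ being purely diffuse), and that the auxiliary condition $K_1|v|^{-2}K_2$ bounded — or its analogue in the reflective case — used in Theorem \ref{Theorem estimate resolvent G lamda}(i)--(ii) to obtain the quadratic decay of $\left\Vert(1-G_{\lambda})^{-1}\right\Vert$ follows from the assumed boundedness of the operators in (\ref{Unif boudedness derivaives})(\ref{Uniformly bounded derivatives suppl}); if it does not follow automatically one simply notes that the $k=1$ instance of those structural conditions already supplies $|v|^{-2}O_1O_2$ bounded, which is exactly what is needed.
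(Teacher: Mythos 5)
Your main argument is exactly the paper's: take the bound $\left\Vert \frac{d^{j}}{ds^{j}}F_{g}(s)\right\Vert \leq C\left(\sum_{l=0}^{j+1}\left\Vert (1-G_{is})^{-1}\right\Vert^{l}\right)\left\Vert g\right\Vert_{Z}$ from Theorem \ref{Theorem estimate resolvent}, then plug in the two estimates on $\left\Vert(1-G_{is})^{-1}\right\Vert$ from Theorem \ref{Theorem estimate resolvent G lamda}(iii): boundedness away from $0$ gives (\ref{Estimation resolvante 1}), and the $O(|s|^{-2})$ bound near $0$ turns the geometric sum into $O(|s|^{-2(j+1)})$, giving (\ref{Estimation resolvante 2}). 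Steps 1 and 2 are correct and match the paper line for line.

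The one thing that is off is your closing fallback. You claim that the $k=1$ instance of (\ref{Uniformly bounded derivatives suppl}) gives $\left\vert v\right\vert^{-2}O_{1}O_{2}$ bounded and that this is ``exactly what is needed'' for Theorem \ref{Theorem estimate resolvent G lamda}. That is not the same operator: Theorem \ref{Theorem estimate resolvent G lamda}(i) needs $K_{1}\left\vert v\right\vert^{-2}K_{2}$, with the weight \emph{between} the two boundary operators, which does not commute with $K_1$. The correct source is (\ref{Unif boudedness derivaives}) with $p=j=2$, which gives $O_{1}\left\vert v\right\vert^{-2}O_{2}$ bounded; if $O_1=K_1$ and $\beta_2>0$, writing $O_{2}=\alpha_{2}R_{2}+\beta_{2}K_{2}$ and using positivity of each summand yields the boundedness of $K_{1}\left\vert v\right\vert^{-2}K_{2}$ (and composing with the bounded $K_1$ handles the analogue $K_{1}\left\vert v\right\vert^{-2}R_{2}K_{1}$ in case (ii)). However this requires $k\geq 2$; for $k=1$ the assumption $K_{1}\left\vert v\right\vert^{-2}K_{2}$ bounded is genuinely extra. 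The paper's own proof is equally silent on this point, so it is a defect in the hypotheses rather than in your reading, but your proposed repair does not close the gap.
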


\begin{proof}
By Theorem \ref{Theorem estimate resolvent}%
\begin{equation*}
\left\Vert \frac{d^{j}}{ds^{j}}F_{g}(s)\right\Vert \leq C^{\prime }\left(
\sum_{l=0}^{j+1}\left\Vert (1-G_{is})^{-1}\right\Vert ^{l}\right) \left\Vert
g\right\Vert _{Z}\ \ (0\leq j\leq k,\ s\neq 0).
\end{equation*}%
The fact that $s\rightarrow \left\Vert (1-G_{is})^{-1}\right\Vert $ is
uniformly bounded outside any neighborhood of $0$ shows (\ref{Estimation
resolvante 1}). It suffices to prove (\ref{Estimation resolvante 2}) for 
\textit{small} $s.$ By using Theorem \ref{Theorem estimate resolvent G lamda}%
\begin{eqnarray*}
\sum_{l=0}^{p+1}\left\Vert (1-G_{is})^{-1}\right\Vert ^{l} &\leq
&\sum_{l=0}^{p+1}\left( \ \frac{C}{s^{2}}\right) ^{l}=\frac{1-\ \left( \frac{%
C}{s^{2}}\right) ^{p+2}}{1-\frac{C}{s^{2}}}=\frac{\ \left( \frac{C}{s^{2}}%
\right) ^{p+2}-1}{\frac{C}{s^{2}}-1} \\
&=&\frac{\ \left( C\right) ^{p+2}\frac{s^{2}}{s^{2(p+2)}}-s^{2}}{C-s^{2}}%
=O\left( \frac{1}{\left\vert s\right\vert ^{2(p+1)}}\right) \ \
(s\rightarrow 0).
\end{eqnarray*}%
This ends the proof.
\end{proof}

We are now ready to prove the main result of this paper.

\begin{theorem}
\label{Theorem rates of convergence}We assume that $O_{1}=K_{1}$ or $%
O_{2}=K_{2}.$ Let the kernels $k_{i}(.,.)\ $of $K_{i}\ (i=1,2)$ be\textit{\
continuous and let }$\left( e^{tT_{O}}\right) _{t\geq 0}$ be irreducible. 
\textit{Let there exist an integer }$k\geq 1$\textit{\ such that} (\ref%
{Unif boudedness derivaives})(\ref{Uniformly bounded derivatives suppl})(\ref%
{Hyp 1})(\ref{Hyp sur O2}) are satisfied.\ If 
\begin{equation*}
g\in D(T_{O})\text{ and }\int_{\Omega }\left\vert g(x,v)\right\vert
\left\vert v\right\vert ^{-(k+1)}dxdv<+\infty
\end{equation*}%
then 
\begin{equation*}
\left\Vert e^{tT_{O}}g-\left( \int_{\Omega }g\right) \psi _{0}\right\Vert
=O\left( t^{-\frac{k}{2(k+1)+1}}\right) ,\ (t\rightarrow +\infty ).
\end{equation*}
\end{theorem}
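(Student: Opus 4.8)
The plan is to assemble the final statement directly from the machinery already developed, using Corollary \ref{Corollary Ingham Bis} as the engine. First I would check that all hypotheses of that corollary are in place. The semigroup $\left( e^{tT_{O}}\right) _{t\geq 0}$ is stochastic, hence bounded; under the assumption $O_{1}=K_{1}$ or $O_{2}=K_{2}$, together with continuity of the kernels $k_{i}$, Theorem \ref{Theorem Invariant density} guarantees an invariant density and Theorem \ref{Theorem irrreducibility} (whose hypotheses are subsumed by irreducibility of $\left( e^{tT_{O}}\right) _{t\geq 0}$, which is assumed) ensures that the semigroup is mean ergodic with ergodic projection $P:g\mapsto \left( \int_{\Omega }g\right) \psi _{0}$; this is exactly the $Pg$ appearing in Corollary \ref{Corollary Ingham Bis}. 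So the target identity $\left\Vert e^{tT_{O}}g-\left( \int_{\Omega }g\right) \psi _{0}\right\Vert =\left\Vert e^{tT_{O}}g-Pg\right\Vert$.

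Next I would verify the membership condition $g\in D(T_{O})\cap \left( Ker(T_{O})+Ran(T_{O})\right)$. Write $g=\left( \int_{\Omega }g\right) \psi _{0}+\left( g-\left( \int_{\Omega }g\right) \psi _{0}\right)$. The first term lies in $Ker(T_{O})$ (it is a scalar multiple of the invariant density). For the second term, note $\int_{\Omega }\left( g-\left( \int_{\Omega }g\right) \psi _{0}\right)=0$ since $\int_{\Omega }\psi _{0}=1$. Moreover, since $g\in L^{1}(\Omega ;\frac{dv}{\left\vert v\right\vert ^{k+1}})$ and in particular $\frac{g}{\left\vert v\right\vert }\in L^{1}(\Omega )$, and since $\psi_0$ has the same integrability in view of \eqref{Integrability condiition} (which holds here by Theorem \ref{Theorem Invariant density}, noting $\psi_0(v)=h_0(v)/|v|$ for $v>0$), the difference satisfies \eqref{Condition 1}. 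Hence Theorem \ref{Theorem characterization range of generator} applies and $g-\left( \int_{\Omega }g\right) \psi _{0}\in Ran(T_{O})$. Since $g\in D(T_{O})$ by hypothesis, we conclude $g\in D(T_{O})\cap \left( Ker(T_{O})+Ran(T_{O})\right)$ as required.

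Then I would invoke the resolvent estimates. Under \eqref{Unif boudedness derivaives}, \eqref{Uniformly bounded derivatives suppl}, \eqref{Hyp 1}, \eqref{Hyp sur O2}, Theorem \ref{Theorem algebraic Estimate of the resolvent} tells us that $\lambda \mapsto (\lambda -T_{O})^{-1}g$ extends continuously to $i\mathbb{R}\backslash \left\{ 0\right\}$ as a $C^{k}$ function $F_{g}$ with $\sup_{\left\vert s\right\vert \geq 1}\left\Vert F_{g}^{(j)}(s)\right\Vert <+\infty$ and $\left\Vert F_{g}^{(j)}(s)\right\Vert \leq C s^{-2(j+1)}\left\Vert g\right\Vert _{Z}$ for $0<\left\vert s\right\vert \leq 1$ and $0\leq j\leq k$. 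This is precisely the hypothesis of Corollary \ref{Corollary Ingham Bis} with $\alpha =2$. Applying that corollary yields $\left\Vert e^{tT_{O}}g-Pg\right\Vert =O\left( t^{-\frac{k}{2(k+1)+1}}\right)$ as $t\rightarrow +\infty$, which is the claim.

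The main obstacle, conceptually, is not in this final assembly—which is essentially bookkeeping—but in checking that the structural hypotheses stated in terms of boundary operators $O_{1},O_{2}$ in the Main Theorem are equivalent to the numbered conditions \eqref{Unif boudedness derivaives}–\eqref{Hyp sur O2}; this requires unwinding the definitions of $G_\lambda$ and its derivatives, as was done in Sections \ref{Section operators estimates} and \ref{Section Boundary fluxes}. The one genuinely delicate point to double-check is that the decomposition of $g$ really does land in $Ran(T_{O})$, i.e. that $\psi_0$ itself has the required weighted integrability so that $g-(\int_\Omega g)\psi_0$ satisfies \eqref{Condition 1}; this relies on the structural assumptions forcing \eqref{Integrability condiition}, and indeed the hypotheses of Theorem \ref{Theorem characterization range of generator} are implied by those of Theorem \ref{Theorem rates of convergence} (continuity of kernels plus $O_i=K_i$ plus irreducibility plus the boundedness conditions, which for $k\geq 1$ imply boundedness of $\left\vert v\right\vert^{-1}K_i$-type operators).
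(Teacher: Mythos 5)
Your overall route is the same as the paper's: decompose $g = Pg + (g-Pg)$ with $Pg\in Ker(T_O)$, verify $g-Pg\in Ran(T_O)$ via Theorem \ref{Theorem characterization range of generator}, then combine Theorem \ref{Theorem algebraic Estimate of the resolvent} with Corollary \ref{Corollary Ingham Bis} (with $\alpha=2$). However, there is a genuine gap in the step where you assert that $\frac{1}{|v|}\bigl(g-(\int_\Omega g)\psi_0\bigr)\in L^1(\Omega)$, i.e. that condition (\ref{Condition 1}) holds for $g-Pg$.

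You claim that $\psi_0$ has the same weighted integrability as $g$ ``in view of (\ref{Integrability condiition})'', and reiterate in your final paragraph that the structural assumptions force (\ref{Integrability condiition}). But (\ref{Integrability condiition}) is precisely the statement $\int_0^1 h_0(v)/v\,dv+\int_{-1}^0\widetilde{h}_0(v)/|v|\,dv<\infty$, which (via (\ref{Principal eigenfunction})) says only $\psi_0\in L^1(\Omega)$. It does \emph{not} give $\frac{\psi_0}{|v|}\in L^1(\Omega)$, which is what condition (\ref{Condition 1}) requires of $g-Pg$ and which amounts to the strictly stronger condition $\int_0^1 h_0(v)/v^2\,dv+\int_{-1}^0\widetilde{h}_0(v)/|v|^2\,dv<\infty$. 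The paper closes this gap by exploiting the eigenvalue equations and the structural assumptions rather than (\ref{Integrability condiition}): since $h_0=O_1O_2h_0$, the first part of (\ref{Uniformly bounded derivatives suppl}) gives $\frac{1}{v^{k+1}}h_0=\frac{1}{v^{k+1}}O_1O_2h_0\in L^1$; then since $\widetilde{h}_0=O_2h_0=O_2|v|^{k+1}\bigl(\frac{h_0}{|v|^{k+1}}\bigr)$, assumption (\ref{Hyp sur O2}) with $p=0$ yields $\frac{1}{|v|^{k+1}}\widetilde{h}_0\in L^1$; hence $\frac{1}{|v|^k}\psi_0\in L^1(\Omega)$ and, since $k\geq1$, $\frac{1}{|v|}(g-Pg)\in L^1(\Omega)$. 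Without this derivation your argument does not actually establish the required integrability of $\psi_0$.
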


\begin{proof}
The ergodic projection of $\left( e^{tT_{O}}\right) _{t\geq 0}$ is given by $%
Pg=\left( \int_{\Omega }g\right) \psi _{0}$ where $\psi _{0}$ is given by (%
\ref{Principal eigenfunction}) and is normalized in $L^{1}(\Omega ).$ Then 
\begin{equation*}
\int_{\Omega }\left( g-Pg\right) =\int_{\Omega }g-\int_{\Omega
}Pg=\int_{\Omega }g-\left( \int_{\Omega }g\right) \left( \int_{\Omega }\psi
_{0}\right) =0.
\end{equation*}%
Since $h_{0}=O_{1}O_{2}h_{0}$ then the first part of (\ref{Uniformly bounded
derivatives suppl}) implies that $\frac{1}{v^{k+1}}h_{0}\in L^{1}.$ On the
other hand, since $\widetilde{h}_{0}=O_{2}h_{0}$ and

\begin{equation*}
\frac{1}{\left\vert v\right\vert ^{k+1}}\widetilde{h}_{0}=\frac{1}{%
\left\vert v\right\vert ^{k+1}}O_{2}h_{0}=\frac{1}{\left\vert v\right\vert
^{k+1}}O_{2}v^{k+1}\left( \frac{1}{v^{k+1}}h_{0}\right)
\end{equation*}%
then (\ref{Hyp sur O2}) implies that $\frac{1}{\left\vert v\right\vert ^{k+1}%
}\widetilde{h}_{0}\in L^{1}$ too. Hence 
\begin{equation*}
\frac{1}{\left\vert v\right\vert ^{k}}\psi _{0}\in L^{1}(\Omega )
\end{equation*}%
by (\ref{Principal eigenfunction}) and%
\begin{equation*}
\frac{1}{\left\vert v\right\vert }\left( g-Pg\right) =\frac{1}{\left\vert
v\right\vert }g-\left( \int_{\Omega }g\right) \frac{1}{\left\vert
v\right\vert }\psi _{0}\in L^{1}(\Omega )
\end{equation*}%
since $k\geq 1.$ Thus the assumptions in Theorem \ref{Theorem
characterization range of generator} are satisfied and $g-Pg\in Ran(T_{O}).$
Since $g-Pg\in D(T_{O})$ then Theorem \ref{Theorem algebraic Estimate of the
resolvent} and Corollary \ref{Corollary Ingham Bis} end the proof.
\end{proof}

\begin{remark}
A sufficient criterion of irreducibility of $\left( e^{tT_{O}}\right)
_{t\geq 0}$ is given in Theorem \ref{Theorem irrreducibility}. The
continuity of the kernels $k_{i}(.,.)\ (i=1,2)$ could probably be relaxed,
see Remark \ref{Remark on continuity assumptions}.
\end{remark}

\begin{remark}
\label{Remark 1 Open problem}A priori, the rates of convergence given in
this paper depend on the condition $\beta _{1}=1$ \ or $\beta _{2}=1.$\ Two
kinds of assumptions appear in this work: The "kernel" assumptions (\ref%
{Unif boudedness derivaives}) (\ref{Uniformly bounded derivatives suppl})
which can be checked only if $\beta _{1}=1$ or $\beta _{2}=1$ (see Remark %
\ref{Remark Kernel assumptions}) and the "non-kernel" assumptions (\ref{Hyp
1})(\ref{Hyp sur O2}) which are satisfied even by the reflections conditions
(see Remark \ref{Remark Non kernel assumptions}). (Note that Theorem \ref%
{Theorem rayon spectral G lambda} and Theorem \ref{Theorem estimate
resolvent G lamda} hold under the very general condition $\beta _{1}+\beta
_{2}>0.$) The extension of the theory to the general case $\beta _{1}+\beta
_{2}>0$ (or at least to the case\ $\beta _{1}\beta _{2}>0)\ $should depend
on a weakening of the "kernel" assumptions which are used essentially in the
proof of the key Lemma \ref{Lemma Operator Estimates}.
\end{remark}

\begin{remark}
\label{Remark 2 Open problems}Three additional \textit{open }problems are
worth mentioning.

(i) We have seen that the quantified Ingham's theorem provides us with the
rate of convergence $O\left( \frac{1}{t^{\frac{1}{2+\varepsilon }}}\right) \ 
$for any $\varepsilon >0$ if the structural assumptions are satisfied for 
\textit{all} $k\in 
\mathbb{N}
,\ $see (\ref{Optimal rate via Ingham}). Whether one can reach the limit
rate $O\left( \frac{1}{\sqrt{t}}\right) $ (or can go beyond this rate) in
the context of kinetic theory is an open problem. Note also that \textit{if}
there exists a constant $C>0$ such that 
\begin{equation}
\left\Vert \frac{d^{j}}{ds^{j}}F_{g}(s)\right\Vert \leq C\ j!\left\vert
s\right\vert ^{-2(j+1)+1}\ (\ j\in 
\mathbb{N}
,\ 0<\left\vert s\right\vert \leq 1)
\label{Controle infinitely many derivatrives}
\end{equation}%
then another quantified version of Ingham's theorem (see Remark \ref{Remark
Ingham C infini}) gives the rate $O(\sqrt{\frac{\ln (t)}{t}}).$ However, in
practice, the verification of (\ref{Controle infinitely many derivatrives})
seems to be out of reach.

(ii) A completely open problem is to \textit{quantify the sweeping phenomenon%
} (\ref{Sweeping phenomenon}) in case of lack of invariant densities.

(iii) We know (see Theorem \ref{Theorem boundary spectrum}) that the
imaginary axis is the boundary spectrum of the generator if $\beta _{1}=1$
or $\beta _{2}=1.$ The extension of this result to more general partly
diffuse models is an open problem.
\end{remark}

\begin{remark}
\label{Remark Generalisation 1}This work could be extended to
non-monoenergetic free models in slab geometry 
\begin{equation*}
\frac{\partial f}{\partial t}(t,x,v,\rho )+\rho v\frac{\partial f}{\partial x%
}(t,x,v,\rho )=0,\ \ \ (x,v,\rho )\in \Omega
\end{equation*}%
where $\Omega =\left( -a,a\right) \times \left( -1,1\right) \times \left(
0,+\infty \right) $ with stochastic partly diffuse boundary conditions 
\begin{equation*}
\left\vert v\right\vert f(t,-a,v,\rho )=\alpha _{1}\left\vert v\right\vert
f(t,-a,-v,\rho )+\beta _{1}\int_{0}^{+\infty }d\rho ^{\prime
}\int_{-1}^{0}k_{1}(v,v^{\prime },\rho ,\rho ^{\prime })f(t,-a,v^{\prime
},\rho ^{\prime })dv^{\prime }\ 
\end{equation*}%
for $v\in \left( 0,1\right) \ $and 
\begin{equation*}
\left\vert v\right\vert f(t,a,v,\rho )=\alpha _{1}\left\vert v\right\vert
f(t,a,-v,\rho )+\beta _{1}\int_{0}^{+\infty }d\rho ^{\prime
}\int_{0}^{1}k_{2}(v,v^{\prime },\rho ,\rho ^{\prime })f(t,a,v^{\prime
},\rho ^{\prime })dv^{\prime }\ 
\end{equation*}%
for $v\in \left( -1,0\right) \ $under the convexity condition (\ref{Convex
combinations}). Indeed, the approach taken in \cite{MK-Rudnicki} could be
extended to this model and the arguments of the present paper could be
adapted accordingly. We have not tried to elaborate on this point here.

The specular reflection $R_{1}:L^{1}\left( \left( -1,0\right) ;\ dv\right)
\rightarrow L^{1}\left( \left( 0,+1\right) ;\ dv\right) $ defined by $\left(
R_{1}\varphi \right) (v)=\varphi (-v)$ could also be replaced by a more
general deterministic boundary operator of the form $\left( R_{1}\varphi
\right) (v)=\varphi (\zeta (v))$ where $\zeta :\left( 0,+1\right)
\rightarrow \left( -1,0\right) $ is a smooth measure-preserving function. A
similar remark applies to $R_{2}.$
\end{remark}

\begin{remark}
\label{Remark Generalisation 2}We are confident that our formalism could
extend to multidimensional (in space) geometries with partly diffuse
boundary operators. However, such an extension is \textit{not}
straightforward and faces serious additional problems we hope to be able to
deal with in the near future.
\end{remark}

\end{document}